\def\pd#1#2{\dfrac{\partial#1}{\partial#2}}
\def\ro{{\varepsilon^2}}
\theoremstyle{plain}
\theoremstyle{plain}
\newtheorem*{lemma*}{Lemma}
\newtheorem{lemma}{Lemma}
\newtheorem*{theorem*}{Theorem}
\newtheorem{theorem}{Theorem}
\newtheorem*{proposition*}{Proposition}
\newtheorem{proposition}{Proposition}
\newtheorem*{corollary*}{Corollary}
\newtheorem{corollary}{Corollary}
\theoremstyle{definition}
\newtheorem*{definition*}{Definition}
\newtheorem{constraint}{Constraint}
\newtheorem{definition}{Definition}
\newtheorem*{example*}{Example}
\newtheorem{example}{Example}
\theoremstyle{remark}
\newtheorem*{remark*}{Remark}\newtheorem*{proof*}{Proof}
\newtheorem{remark}{Remark}
\def\e{{\varepsilon}}
\def\A{{\mathcal A}}
\def\Gama{{\Gamma_S}}
\def\Gamati{{\tilde\Gamma_S}}
\def\GamaG{{\Gamma_S^{geo}}}
\def\Z{{\mathbb Z}}
\def\R{{\mathbb R}}
\def\C{{\mathbb C}}
\def\ome{{\omega}}
\def\Ome{{\Omega}}
\def\GA{{\mathcal A}}
\def\BU{{ \mathcal B}}
\def\N{{\mathbb N}}
\newcommand{\T}{\mathbb{T}}
\newcommand{\ii}{{\rm i}}
 \title[  Normal Forms for  the NLS ]{ A Normal Form for   the Schr\"odinger  equation with analytic non--linearities}
\author{ M. Procesi*, \and 
 C. Procesi{**}. }
 \thanks{ *Universit\`a Federico II  Napoli, supported by ERC grant ''New connections between dynamical systems and Hamiltonian PDEs '' and partially  by  the PRIN2009 grant "Critical Point Theory and Perturbative Methods for Nonlinear Differential Equations", {**}  Universit\`a di Roma, La Sapienza }
\begin{document}
\begin{abstract}
We discuss a class of normal forms of the completely resonant non--linear Schr\"odinger  equation on a torus.  We stress the geometric and combinatorial constructions arising from this study.
 \end{abstract}\maketitle
 
\tableofcontents
\section{Introduction} In this paper  we  exhibit a  normal form, with remarkable integrability properties, for the  completely resonant non--linear Schr\"odinger  equation on  the torus $\T^n,\ n\in\N$ (NLS for brevity):
\begin{equation}\label{main}-iu_t+\Delta u=\kappa |u|^{2q}u +\partial_{\bar u}G(|u|^2),\quad  q\geq 1\in\N .\end{equation}
Where $u:= u(t,\varphi)$, $\varphi\in \T^n$ and  $G(a)$ is a real analytic function whose Taylor series starts from degree $q+2$. The case $q=1$ is of particular interest and is usually referred to as the {\em cubic NLS}. 
\smallskip

It is well known that equation  \ref{main},  the NLS,   can be written as an infinite dimensional Hamiltonian dynamical system.
 
  It has the {\em energy}  $H=\int_{\T^n}(|\nabla(u)|^2+\kappa (q+1)^{-1}|u|^{2(q+1)}+G(|u|^2))\frac{d\phi}{(2\pi)^n}$,   the {\em momentum}  $M=\int_{\T^n} \bar u(\varphi) \nabla u(\varphi)\frac{d\phi}{(2\pi)^n}$  and  the {\em the mass} $L= \int_{\T^n} |u(\varphi)|^2\frac{d\phi}{(2\pi)^n}$,  as integrals of motion.
 
 Passing to the Fourier representation
\begin{equation}
u(t,\varphi):= \sum_{k\in \Z^n} u_k(t) e^{\ii (k, \varphi)}\ 
\end{equation} 
we have, up to a rescaling of $u$ and of time, 
 in coordinates:
\begin{equation}\label{Ham}H:=\sum_{k\in \Z^n}|k|^2 u_k \bar u_k \pm  \sum_{k_i\in \Z^n: \sum_{i=1}^{2q+2}(-1)^i k_i=0}\hskip-30pt u_{k_1}\bar u_{k_2}u_{k_3}\bar u_{k_4}\ldots u_{k_{2q+1}}\bar u_{k_{2q+2}}+\int_{\T^n} G(|u|^2)\frac{d\phi}{(2\pi)^n} , \end{equation}
We fix the sign to be $+$ since in our treatment it does not play any particular role.
 \bigskip

\subsection{Preliminaries}
 By Formula \eqref{Ham}, we can write equation  \eqref{main}  as an infinite dimensional Hamiltonian dynamical system, where the quadratic term  consists of infinitely many independent oscillators with rational frequencies and hence completely resonant (all the bounded solutions are periodic).  In order to study resonant systems  a standard instrument is the ``Resonant Birkhoff  normal form''. In Formula \eqref{Ham} denote by $K:=  \sum_{k\in \Z^n}|k|^2 u_k \bar u_k$.
 
 The first step of ``resonant Birkhoff  normal form'' is the sympletic change of variables which reduces the Hamiltonian $H$ to 
$$H=H_{\rm Res}+ H^{(2q+4)};\quad H_{\rm Res}= K  + H^{(2q+2)}_{res}(u,\bar u) $$ where $H^{(2q+4)}$ is an analytic function of degree at least $2q+4$ while $H^{(2q+2)}_{res}$ is of degree $2q+2$ and consists exactly of the degree $2q+2$ terms of \eqref{Ham} which Poisson commute with $K$. 
Then one wishes to treat   the {\em truncated system}   $H_{\rm Res}=K  + H^{(2q+2)}_{res}(u,\bar u)$, as the new unperturbed Hamiltonian and $H^{(2q+4)}$ as a small perturbation.  An ideal situation is when the truncated system is integrable, 
 this is the case for the cubic NLS in dimension 1, as shown  by Kuksin and P\"oschel  in \cite{KuP}. 
However  the special degenerations of the truncated system  used by these authors are not  valid in the case of the non-cubic NLS, already in dimension one, nor for the cubic case in dimension higher than one.

Although the truncated system appears to be very complicated (see formula \eqref{Ham2}) we show that it admits infinitely many invariant subspaces (cf. \S \ref{insub}), defined by requiring $u_k=0$ for all $k\notin S$ where $S=\{v_1,\ldots,v_m\}$, {\em tangential sites}, is some (arbitrarily large) subset of $\Z^n$ satisfying the {\em completeness condition}  (see Proposition 
\ref{completa}). 

The dynamics on these subspaces depends in a subtle way on the geometric properties of $S$, we show -- in Proposition \ref{completa} {\it ii)}-- that for generic choices of $S$ the behavior is integrable and that all the $|u_{v_i}|$ are constants of motion.  Suitable non--generic choices  of $S$  lead also to interesting non--integrable behavior as for instance in the paper \cite{KTa}.

By momentum conservation, it is easily seen that for  any set $S\subset \Z^n$, the subspace $u_k=0$ for all $k\notin {\rm Span}(S)$ is invariant. We restrict to this subspace\footnote{notice that this subspace is invariant   not only for $H_{Res}$ but also for the full Hamiltonian  $H$}
and denote by   $S^c:={\rm Span}(S)\setminus S$ the {\em normal   sites}.
 We  collect in $H_{Res}$  the terms by the degree (which we denote by $\# S^c$) in the variables $u_k,\ \bar u_k, \ k\in S^c$  we have
 $$H_{Res}=H_S+H_{\# S^c= 1}+H_{\# S^c= 2}+H_{\# S^c>2},$$ by definition the completeness is equivalent to the fact that the term of degree one is zero, i.e. $H_{\# S^c= 1}=0$.
\smallskip

We show that the term $H_{\# S^c>2}$ is negligible and we give an explicit formula for $H_{\# S^c= 2}$ described by an infinite dimensional matrix (cf. Formula \eqref{laquadra})  with coefficients depending on the ``tangential angles''. 
This is done explicitly by  1) putting the tangential variables in action--angle coordinates  and then 2)  introducing parameters for the  actions and finally 3) isolating the terms of the Hamiltonian $H_{Res}$ of degree  $\leq 2$. The resulting Hamiltonian is what we call the {\em normal form}, it is quadratic and explicitly described by a matrix which  depends  on the ``tangential angles''.  Hence the dynamics of this quadratic Hamiltonian is apparently non--integrable and given by an infinite set of coupled linear equations with {\em non--constant} coefficients. 

It is natural at this point to try to reduce the normal form to constant coefficients, exploiting the fact that $H^{(2q+2)}_{res}$ is {\em smaller} than $K$. However  the quadratic term $K$ is very degenerate and does not satisfy the {\em second Melnikov condition}, hence the perturbative methods, see for instance \cite{El}, fail. In finite dimensional systems one can still approximately reduce (to constant coefficients) matrices  whose diagonal part does not satisfy the second Melnikov condition,  see \cite{Cha}. This is done via a change of variables which is not close--to--identity and hence must be constructed explicitly.  In our infinite dimensional setting however this kind of results are {\em not} applicable, since in general the change of variables suggested by  the finite dimensional analog is not analytic.

\subsection{ The object of this paper}
The main contribution of this paper is to construct, for generic $S$,   an explicit analytic symplectic change of variables which removes the dependence form the tangential angles so that the Normal form is block--diagonal (with blocks of dimension $\leq 2n$) and integrable, see Theorem \ref{teo1} for a precise statement. Notice that this symplectic transformation is {not} close--to--identity. It is given by explicit algebraic formulas (Formula \eqref{labella}) and not constructed through a recursive algorithm.  This is due to the fact that we can achieve a complete control on the diagonal blocks of the normal form. In turn this is done by codifying the corresponding matrix in terms of  graphs, see Definition \ref{iduegrafi}, and describing   the possible  blocks which may appear in the normal form,  depending on the choice of the tangential sites,  combinatorially using {\em finitely} many graphs. 
\smallskip

Then we   find    {\em optimal}  constraints on the tangential sites, given by a finite list of polynomial inequalities on the coordinates of $S$. If $S$ satisfies these inequalities we say that it is {\em generic} and then, these      constraints    make the normal form {\em as simple as possible}.   

 We  organize our constraints in 6 different requirements,  summarized in  Definition   \ref{fincon}. 
 Under these constraints the normal form is block--diagonal with blocks of dimension bounded by $n+1$, except finitely many exceptional blocks of size bounded by $2n$. The diagonal blocks are explicitly described as functions of the average tangential actions $\xi$ and angles $x$.
 \smallskip

Then, for these infinitely many choices of the tangential sites $S$,  we exhibit\footnote{  In general, in order  to construct a change of variables one  solves a Hamilton--Jacobi equation, finding a generating function for the change of variables. In our case however we do not use this procedure, indeed the change of variables was {\em  guessed} directly.} a symplectic change of variables (cf. Formula \eqref{labella}) which makes the normal form with constant coefficients and still block--diagonal.

Finally we show that, in dimension one and two, the normal form has both stable and unstable regions,
namely there are open sets for the parameters $\xi$ where the normal form is completely elliptic--hence its Hamiltonian flow is stable. For all the remaining values of the parameters $\xi$ there are a finite number of unstable directions. In the stable region one may perform a further analytic change of variables which reduces the normal form to the standard elliptic one $(\ome(\xi),y)+ \sum_k \bar\Ome_k |z_k|^2$ (cf. Corollary  \ref{ficof}).

 \subsection{Some related literature} The idea of choosing an appropriate set of tangential sites $S$  was first used by Bourgain in \cite{Bo2} in a slightly different context.  He studied the cubic NLS in dimension two and proved the existence of quasi--periodic solutions with two frequencies by using a combination  of Lyapunov-Schmidt reduction tecniques and a Nash--Moser algorithm to solve the small divisor problem (the so--called {\em Craig--Wayne--Bourgain} approach, see \cite{CW} ,\cite{Bo2} and  for a recent generalization also \cite{BBhe}). In \cite{Bo2} it is shown that, for appropriate choices of the tangential sites, one may find simple solutions for the bifurcation equation where only the Fourier indexes of the tangential sites are excited. This strategy was generalized by Wang in \cite{W09} to study the NLS on a torus $\T^n$ and prove existence of quasi periodic solutions with $n$ frequencies. A similar idea was exploited in \cite{GP2} and \cite{GP3} to look for   ``wave packet'' periodic solutions (i.e. periodic solutions which at leading order excite an arbitrarily large number of ``tangential sites'') of the cubic NLS in any dimension both in the case of periodic and Dirichlet boundary conditions.
  
  In the context of KAM theory and normal form, this idea  was used by Geng in  \cite{G} for the NLS in dimension one with the nonlinearity $|u|^4u$.  
  
 A similar strategy is used   by Geng-You and Xu in \cite{GYX}, to study the cubic NLS in dimension two. In that paper the authors show that one may give constraints on the tangential sites so that the normal form is non--integrable (i.e. it depends explicitly on the angle variables) but block diagonal with blocks of dimension $2$.    They apply this result to perform a KAM algorithm and prove existence (but not stability) of quasi--periodic solutions. 
 We also mention the paper \cite{MP}, which studies the non-local NLS and the beam equation both for periodic and Dirichlet boundary conditions. The main result  of that paper  is that,  by only requiring very simple  constraints on the tangential sites, the leading order of the normal form Hamiltonian is quadratic and block diagonal, with blocks of uniformly bounded dimension.  
 
 Finally we mention the preprints by Wang \cite{W1} and \cite{W2}, which use the {\em Craig--Wayne--Bourgain} approach to study quasi--periodic solutions for the NLS  \eqref{main} in any dimension.

\subsection{ Description of the paper:}   In Section \ref{due} we introduce some necessary Hamiltonian formalism, we perform the Birkhoff change of variables and study the {\em truncated system} $H_{Res}$. In Particular we study invariant subspaces and in Propositions \ref{completa} and \ref{compleH} we give conditions for their {\em completeness} and {\em integrability}. Finally we pass to the elliptic--action angle variables and define the functional domains in which we work.
All the results and techniques of this section are pretty standard so we try to review them concisely. 

Having introduced the relevant notations, in Section \ref{tre} we give the notion of {\em generic} tangential set $S$ and state our main results Theorem \ref{teo1} and Corollary \ref{ficof}.

In Section \ref{quattro} we impose Constraint \ref{co1} on the tangential sites $S$, this enables us to define our normal form $N$ -- see Proposition \ref{quadrat}--and prove that  $N$  satisfies  non--degeneracy in the action variables--  see Proposition \ref{nond}. Finally we discuss the perturbation $P$ and estimate its size-- see Proposition \ref{gliord}.

In Section \ref{cinque} we define two spaces $V^{0,1}$ and $F^{0,1}$ on which  we study the linear operator $ad(N):=\{N,\cdot\}$. This gives two matrix descriptions of $N$.

In Section \ref{Gra} we describe the two matrices in terms of two graphs $\Gamati$ and $\Lambda_S$ with vertices respectively the basis elements of  $V^{0,1}$ and $F^{0,1}$ and edges connecting those couples of elements which have a non--zero matrix coefficient.  This is a standard way to display infinite matrices, in particular one easily sees that the connected components of the graph correspond to block--diagonal terms in the matrix.

From these graphs we deduce a more abstract {\em geometric graph}  $\Gama$ which still contains all the information  necessary to compute the matrix entries of $ad(N)$. 

In \ref{seiuno} we define a graph $\GamaG$ with vertices on $\R^n$ which contains $\Gama$ but is easier to study. With this notations we prove-- Proposition \ref{rozzo}-- a first rough bound on the dimension of the block--diagonal blocks in $ad(N)$.

Finally in \ref{seidue}-- Theorems \ref{sunto} and \ref{oneone}-- we state our main results on the connected components of $\GamaG$ and $\Gama$, this is the core of the paper. 
It is interesting to notice that these results hold  independently of the number of tangential sites and hence remain true also if one excites infinitely many tangential sites.

It is possible that  our constraints may be   improved (the best possible result is that the geometric constraints are sufficient to bound   the dimension of all the blocks  by $n+1$). This is actually true in low dimensions $n=1,2$ for all $q$. For $q=1$ we believe it to be true in any dimension, this will be discussed in a separate paper.  

In Section \ref{sette} we formalize our graphs as subgraphs of a Cayley graph (we group the relevant  definitions and properties  in the Appendix).  This is the content of Proposition \ref{gliegg} and enables us to endow our graphs with a group action that simplifies the  combinatorial analysis.

In Section \ref{otto} we  impose Constraints \ref{co4} and \ref{co5}.  This enables us to identify the connected components of $\Lambda_S$ with those of $\Gama$-- see Proposition \ref{coqe2} and Corollary \ref{cabel}.  The isomorphism between the   connected components of the two graphs is the key point which allows us to construct the change of variables which sends $N$ to constant coefficients, as can be seen in Example \ref{redEGG}. 

In section \ref{eqgama} we define a finite set of connected graphs, the {\em possible combinatorial graphs}. To a graph $\GA$ of this set, with $k$ vertices, we associate a list of  $k-1$ linear and quadratic equations  in $n$ variables, given by   Formula \eqref{bacosG}.  Then in Proposition \ref{brik} we show that $\GA$ is isomorphic to a connected subgraph of $\Gama$ if and only if its equations have solutions in $S^c$ (the solutions are  identified with a special vertex in $\Gama$, called the root).
 This enables us to describe the infinite connected components of $\Gama$ via a finite set of graphs.

To a   possible combinatorial graph $\GA$  we associate its 
 equations  \eqref{bacosG}, which  have as coefficients linear and quadratic functions  of the tangential sites.
 If these equations do not have real solutions for generic choices of $S$ then  $\GA$ cannot be isomorphic to a connected subgraph of $\Gama$
for generic $S$. This is a geometric condition from which   one expects to be able to rule out the connected components of $\Gama$ as soon as $k-1\geq n+1$ by imposing that those  overdetermined system of equations be generically incompatible. 
However  this simple idea does not cover various pathological cases. We try to give an idea of the main problems.

Given a graph $\GA$  with $k$ vertices  its equations \eqref{bacosG} may not be of maximal rank for particular choices of $S\subset \Z^{k-1}$, this can be avoided by introducing appropriate generiticity  constraints, as Constraint \ref{co8}. 
Unfortunately it may well be, see Example \ref{pirir},  that equations \eqref{bacosG} are linearly dependent for all choices of $S$, independently of the dimension $n$ such that $S\in \Z^n$.  In this case one is faced with a compatibility problem, namely one can try to exclude this graphs by requiring that the equations are incompatible for generic choices of $S$, see Example \ref{pirir} and Constraint \ref{co6}.

 This does  not conclude the analysis since it is possible that the equations
 be always compatible, see Remark \ref{bala}.  
 So it is possible that one has a graph with $k>n+1$ vertices but still with rank $\leq n$, this is the reason of our bound $k\leq 2n$. 
 To simplify the problem we introduce the notion of {\em colored rank}, see Definition \ref{ranghi}, we have Theorem \ref{ridma}.

In section \ref{Magt}, using Theorem \ref{ridma},  we discuss {\em possible combinatorial graphs} $\GA$ with  rank $r=n+1$, when $S\subset \Z^n$.

 We  prove that if their equations are   always compatible then their (unique) solution must be a point in $S$. This means that $\GA$ cannot be isomorphic to a connected subgraph of $\Gama$ (which has vertices in $S^c$).

This enables us to prove Theorems \ref{sunto} and \ref{oneone}.

In Section \ref{lapro} we prove Theorem \ref{main} by exhibiting in Formula \ref{labella} the change of variables which reduces the normal form $N$ to constant coefficients.  We also give explicit formul\ae \ which allow to compute $N$ in this new set of variables, via the {\em combinatorial graphs}.

In section \ref{pfiu} we prove Proposition \ref{teo2} and Corollary \ref{ficof}. The most relevant  notion is that of arithmetic constraint. Roughly speaking we want to ensure that if a combinatorial graph $\GA$ is such that its equations have a {\em unique solution}  in $\R^2$, then this solution is not integer valued. 

This result enables us to prove the existence of stable regions for the parameters $\xi$, where $N$ is purely elliptic.  Corollary \ref{ficof} follows from the theory of Quadratic hamiltonians and from Proposition \ref{teo2}.
 
 \vskip5pt
{\em Acknowledgments:} We wish to thank   Nguyen Bich Van  for correcting some formulas, Dorian Goldfeld  and Jonathan Pila for introducing us to the paper \cite{BP}  and finally Luca Biasco and Massimiliano Berti for several useful suggestions and discussions.

\section{Hamiltonian formalism}\label{due}
 We work on the scale of complex Hilbert spaces 
\begin{equation}\label{scale}
{\bf{\bar \ell}}^{(a,p)}:=\{ u=\{u_k \}_{k\in \Z^n}\;\big\vert\;|u_0|^2+\sum_{k\in \Z^n} |u_k|^2e^{2 a |k|} |k|^{2p}:=||u||_{a,p}^2 < \infty \},
\end{equation}$$\ a>0,\ p>n/2\,,$$
 equipped with the symplectic structure $\ii \sum_{k\in \Z^n} d u_k\wedge d \bar u_k $.

These choices are rather standard in the literature and consist in requiring that the functions $u(\varphi)$   extend to analytic functions in    the complex domain $|Im(\varphi)|< a$, with Sobolev regularity on the boundary, the condition $p>n/2$ ensures that our function spaces are Hilbert algebras.

 \begin{remark}\label{ivt}
It is not necessary to assume that the torus $\T^n=\R^n/\Z^n$.  The theory works and in fact we shall apply it, also if $\T^n=\R^n/\Lambda$ where $\Lambda$ is a lattice generated by a not necessarily orthonormal basis. \end{remark}

 We may write, for any $d$
\begin{equation}\label{albet}
[u]^{2d}:=\sum_{k_i\in \Z^n} u_{k_1}\bar u_{k_2}u_{k_3}\bar u_{k_4}\ldots u_{k_{2d-1}}\bar u_{k_{2d}}= \sum_{\alpha,\beta\in (\Z^n)^\N: \atop {|\alpha|=|\beta|=d}} \binom{d}{\alpha}\binom{d}{\beta}u^\alpha\bar u^\beta, 
\end{equation}  where $\alpha:k\mapsto \alpha_k\in\N$ and $u^\alpha=\prod_k u_k^{\alpha_k}$, same for $\beta$.
It is easily seen that for any $d$ the function $[u]^{2d}$ is an analytic function of $u,\bar u$.
 Moreover  $[u]^{2d}$ is {\em regular}, namely its Hamiltonian vector field is an analytic function from ${\bf{\bar \ell}}^{(a,p)}\times {\bf{\bar \ell}}^{(a,p)} $ to itself. 
 
 In formula \eqref{Ham} we may expand $G$ in Taylor series obtaining a totally convergent sum of terms  $[u]^{2d}$, this shows that our Hamiltonian is analytic and regular.
\smallskip

The torus $\T^n$ acts on itself by translations leaving invariant the symplectic form, in fact it  gives rise in this way to a {\em moment map} in the sense of symplectic Geometry or a {\em momentum vector} in the language of Mechanics.  The Hamiltonian is invariant  under translation so by Noether's Theorem it Poisson commutes with momentum.  

We  thus will systematically apply the fact that our Hamiltonian $H$ (see Formula \eqref{Ham}) has  $n+1$ conserved  quantities:
the  $n$--vector {\em momentum} $M=\sum k |u_k|^2$ and the scalar {\em mass}  $L= \sum_k |u_k|^2$, with 
\begin{equation}
\{M,u_h\}=\ii h u_h,\ \{M,\bar u_h\}=-\ii h \bar u_h,\ \{L,u_h\}=\ii u_h,\ \{L,\bar u_h\}=-\ii \bar u_h.\ 
\end{equation}\smallskip
The terms in equation \eqref{albet} commute with $L$. The conservation of momentum selects the terms with $\sum_k (\alpha_k-\beta_k)k=0$.
A first useful consequence of the conservation of momentum is that given any set $S\subset \Z^n$, setting 
$$\bar\ell^{(a,p)}_S:=\{ u\in \bar\ell^{(a,p)}:\; u_k=0,\; \forall k \notin \;{\rm Span}(S)\}, $$ $\bar\ell^{(a,p)}_S\times \bar\ell^{(a,p)}_S $ is an invariant subspace for the dynamics. 

\begin{remark}
\label{idt}This has the following geometric interpretation, the lattice $\Lambda:={\rm Span}(S)\subset\Z^n$  is of some rank $k$ and it is the character group of a  torus  $\bar T=\R^k/\Lambda$  with a natural map  $\pi:T\to \bar T$.  Under this map a simple variant of the space $\bar\ell^{(a,p)}$ for the torus $\bar T$ is identified to $\bar\ell^{(a,p)}_S$. \end{remark}

 \subsection{One  step of Birkhoff normal form } A  monomial $u^\alpha\bar u^\beta$ is an eigenvector of $ad(K):=\{K,-\}$  of eigenvalue $\sum_k (\alpha_k-\beta_k)|k|^2$ where $K$  is the quadratic part \begin{equation}
K:=\sum_{k\in \Z^n}|k|^2 u_k \bar u_k \quad\text{quadratic energy}.
\end{equation}   

We apply a step of Birkhoff normal form (cf. \cite{Bo3},\cite{Bo2},\cite{bamb}),  by which we cancel all the  terms  of degree $2(q+1)$ which do not Poisson commute with   $K$.   This is done by applying  a well known analytic change of variables, with generating function $$A:=\sum_{\alpha,\beta\in (\Z^n)^\N: |\alpha|=|\beta|=q+1\atop {\sum_k (\alpha_k-\beta_k)k=0\,,\;\sum_k (\alpha_k-\beta_k)|k|^2\neq 0}} \hskip-10pt\binom{q+1}{\alpha}\binom{q+1}{\beta}\frac{u^\alpha\bar u^\beta}{\sum_k (\alpha_k-\beta_k)|k|^2}\,.$$ We denote the change of variables by $\Psi^{(1)}:=e^{ad(A)}$ and notice that it is well defined and analytic:    $   B_{\epsilon_0}\times B_{\epsilon_0} \to B_{2{\epsilon_0}} \times B_{2\epsilon_0}$,  with $\epsilon_0=  (2 c_{a,p})^{-1}$ (here $B_{r}$ denotes the open ball of radius $r$   and  $c_{a,p}$ is the algebra constant of  the space\footnote{Notice that the unperturbed Hamiltonian $K$ is completely resonant so $A$ does not have small divisors.  Since 
$\bar\ell^{(a,p)}$ is a Hilbert algebra, this implies that the change of variables does not loose regularity.} $\bar\ell^{(a,p)}$) .

   By construction $\Psi^{(1)}$  brings (\ref{Ham}) to  the form $H= H_{\rm Res} +P^{2(q+2)}$ where:
\begin{equation}\label{Ham2}H_{\rm Res}:=\sum_{k\in \Z^n}|k|^2 u_k \bar u_k +  \hskip-30pt\sum_{\alpha,\beta\in (\Z^n)^\N: |\alpha|=|\beta|=q+1\atop {\sum_k (\alpha_k-\beta_k)k=0\,,\;\sum_k (\alpha_k-\beta_k)|k|^2=0}} \hskip-10pt\binom{q+1}{\alpha}\binom{q+1}{\beta}u^\alpha\bar u^\beta\,,
\end{equation} 
$P^{ 2(q+2) }(u)$  has degree at least $2(q+2)$ in $u$, it is is analytic and regular  and satisfies the bound:
$$\sup_{ (u,\bar u)\in B_{\e}\times B_{\e}} \| X_{P^{2(q+2)}}\|_{a,p} \leq {\rm cost} \,\e^{2q+3}\,, \quad \forall \e< \epsilon_0$$
where cost denotes a universal constant (depending only on $q$, $c_{a,p}$ and the function $G$).
\begin{remark}
The three constraints in the second summand of formula \eqref{Ham2} express the conservation of $L$, $M$ and  the quadratic energy $K  $.
\end{remark}
\smallskip

  \vskip10pt

\begin{definition}\label{reson}
We say that a list $k_1,\dots, k_{2d}$ of vectors in $\Z^n$ is {\em resonant}  if,
up to reordering, we have
$$ k_1+k_3\dots+k_{2d-1}= k_{2}+\dots + k_{2d} \,,\quad |k_1|^2+\dots+|k_{2d-1}|^2= |k_{2}|^2+\dots + |k_{2d}|^2. $$
We say that the list  is {\em integrable} if furthermore, up to reordering, we have $k_{2i-1}=k_{2i},\ i=1,\ldots,d$.
\end{definition}
 
 The resonant lists with $d=q+1$ describe the  {\em resonant monomials}, that is  those monomials which Poisson  commute with $K$, which appear in $H_{\rm Res}$.  The integrable lists describe the monomials in $|u_h|^2$.
 \begin{figure}[!ht]
 
\begin{minipage}[c]{9cm}{ \begin{example}\label{conin}[$q=1$] $$k_1+k_3= k_2+k_4, \qquad  |k_1|^2+|k_3|^2=| k_2|^2+|k_4|^2$$ is equivalent to \begin{equation}\label{vinco}k_1+k_3= k_2+k_4,\qquad( k_1-k_2,k_3-k_2)=0\end{equation}\end{example}
}\end{minipage}\hskip20pt\begin{minipage}[c]{3cm}
{
\psfrag{a}{$k_1$}
\psfrag{b}{$k_2$}
\psfrag{c}{$ k_3$}
\psfrag{d}{$ k_4$}

\includegraphics[width=3cm]{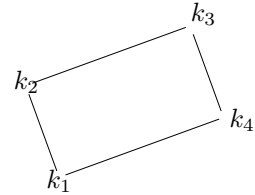}
}
\label{fig0}
\end{minipage}\caption{A resonant quadruple $k_1,k_2,k_3,k_4$}
\end{figure}

Notice that a quadruple $k_1,k_2,k_3,k_4$ is  resonant if these points are the vertices of a rectangle, it is integrable if and only if the corresponding rectangle is degenerate.
 \subsubsection{Invariant subspaces.\label{insub}}
 In view of Remark \ref{idt} we wish to study  the Hamiltonian $H_{\rm Res}$ on the invariant subspaces $\ell^{a,p}_S$ for suitable choices of $S$. We want to characterize those subsets $S\subset \Z^n$, such that the Hamiltonian vector field $X_ {H_{\rm Res}}$ is tangent to the subspace of equation
 $$ u_k=0=\bar u_k \,,\quad \forall k\in S^c:={\rm Span}(S)\setminus S, $$ this of course implies that this subspace is stable under the dynamics, a set $S$ with this property is called {\em complete}. We denote by $H_S$ the Hamiltonian $H_{\rm Res}$ restricted to such a subspace, naturally $H_S$ depends only on $u_k,\bar u_k$ with $k\in S$.
 
 The next statement follows immediately from the definitions:
 \begin{proposition}\label{completa}
\begin{enumerate}\item$S$ is complete if and only if, for any choice of $2q+1$  vectors $v_i\in S$ the following holds:

\noindent {\em If there exists a further vector $w\in \Z^n$ such that the list $v_1,\dots, v_{2q+1},w$ is resonant then $w\in S$.}

\item If  all the lists in $S$ of $2q+2$ elements  which are resonant  are also integrable,  then $H_S$ depends only on the elements $|u_h|^2$ with $h\in S$.  \end{enumerate}
\end{proposition}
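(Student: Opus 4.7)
The plan is to read the tangency of $X_{H_{\rm Res}}$ directly off the explicit formula \eqref{Ham2}. The subspace $\bar\ell^{(a,p)}_S\times \bar\ell^{(a,p)}_S$ is the zero set of the coordinate functions $u_w,\bar u_w$ for $w\in S^c$; $X_{H_{\rm Res}}$ is tangent to it iff $\partial_{u_w}H_{\rm Res}$ and $\partial_{\bar u_w}H_{\rm Res}$ vanish on the subspace for every such $w$. Crucially, the coefficients $\binom{q+1}{\alpha}\binom{q+1}{\beta}$ in \eqref{Ham2} are strictly positive, so no algebraic cancellation between distinct surviving monomials can occur.

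For part (i), I would differentiate the generic summand: $\partial_{\bar u_w}(u^\alpha\bar u^\beta)=\beta_w\,u^\alpha\bar u^{\beta-e_w}$, and the restriction of this to $\bar\ell^{(a,p)}_S\times\bar\ell^{(a,p)}_S$ is nonzero iff $\alpha$ is supported in $S$, $\beta_w=1$, and $\beta-e_w$ is supported in $S$. Hence, by the positivity observation, $\partial_{\bar u_w}H_{\rm Res}$ vanishes on the subspace iff no resonant pair $(\alpha,\beta)$ with $|\alpha|=|\beta|=q+1$ has this support pattern. Re-expressed in terms of ordered $(2q+2)$-lists, this says that no resonant list consists of $w$ together with $2q+1$ elements of $S$. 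The analogous computation for $\partial_{u_w}$ yields the same condition by the symmetry $\alpha\leftrightarrow\beta$ built into Definition \ref{reson}. Quantifying over $w\in S^c$ and taking the contrapositive gives precisely the statement of (i).

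For part (ii), $H_S$ is the sum of those terms in \eqref{Ham2} whose multi-indices $\alpha,\beta$ are supported in $S$. If every such resonant list of length $2q+2$ is integrable, Definition \ref{reson} forces the odd- and even-position indices to coincide after reordering, which translates into $\alpha=\beta$ in the multi-index encoding; hence each surviving monomial reduces to $u^\alpha\bar u^\alpha=\prod_{h\in S}|u_h|^{2\alpha_h}$. Summation then expresses $H_S$ as a polynomial in the actions $|u_h|^2$, $h\in S$. The only substantive point anywhere in the argument is the positivity of the multinomial coefficients used in part (i); part (ii) is a direct unfolding of the definitions.
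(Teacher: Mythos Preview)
Your argument is correct and is precisely the detailed unfolding the paper has in mind when it says the statement ``follows immediately from the definitions.'' For part~(i), the positivity of the multinomial coefficients is indeed the point that prevents accidental cancellations among surviving monomials; for part~(ii), the key is the paper's explicit remark that ``the integrable lists describe the monomials in $|u_h|^2$'', i.e.\ integrability of a resonant list in the sense of Definition~\ref{reson} is exactly the condition $\alpha=\beta$ on the associated multi-indices, so your translation is the intended one.
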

\begin{remark}\label{piro}
 A sufficient condition for $S$ to be integrable is the following: set  $S= \{v_1,\dots ,v_m\} $, introduce variables $e_i$ with $i=1,\dots,m$.  For any choice of $2q+2$ elements $e_{i_1},\dots e_{i_{2q+2}}$ if the expression
  $$ e_{i_1}+ \dots+ e_{i_{q+1}}-(e_{i_{q+2}}+\dots +e_{i_{2q+2}})$$ is not zero then
  $$ v_{i_1}+ \dots+ v_{i_{q+1}}-(v_{i_{q+2}}+\dots +v_{i_{2q+2}})\neq 0.$$
  We have thus shown that completeness and integrability are {\em genericity} condition  on $S$, the first of many which we will impose.
\end{remark}
  \begin{example}
$q=1,\ n=2,m=4$  \quad Four vectors $v_1,v_2,v_3,v_4$  in the plane are not complete  if they form a picture of type
$$\xymatrix{{\circ}\, v_1&&   &{\circ}\, v_4\\&{\circ}\,v_2&   &{\circ}\,v_3 }$$ that is we have a right triangle  which is not completed to a rectangle.

The list
 $$\xymatrix{{\circ}\, v_1&   &{\circ}\, v_4\\{\circ}\,v_2&   &{\circ}\,v_3 }$$ is complete but not integrable, and finally
 $$ \xymatrix{&{\circ}\, v_1&{\circ}\, v_4&\\{\circ}\,v_2& &  &{\circ}\,v_3 }$$
is complete and integrable.
\end{example}
  
When we partition $${\rm Span}(S)= S\cup S^c,\quad S:=(v_1,\ldots,v_m),$$ where $S$ is complete, we call
the elements of  $S$   {\em tangential sites} and of $S^c$ the {\em normal sites}.   Of course the word tangential is justified by the fact that the Hamiltonian vector field is tangent to the subspace parametrized by the coordinates in $S$.\smallskip

We introduce 
\begin{equation}\label{symme}
A_r(\xi_1,\ldots,\xi_m)= \sum_{ \sum_i k_i=r} {\binom{r}{k_1,\dots, k_m}}^2 \prod_i \xi_i^{k_i}.
\end{equation}
\begin{proposition}\label{compleH}
If $S$ is complete and  integrable  the restricted Hamiltonian is
$$H_S=\sum_{i=1}^m |v_i|^{2 } |u_{v_i}|^{2 }+ A_{q+1} (|u_{v_1}|^2,\ldots,|u_{v_m}|^2)$$$$=\sum_{i=1}^m |v_i|^{2 } |u_{v_i}|^{2 }+ \sum_{ \sum_i k_i=q+1} {\binom{q+1}{k_1,\dots, k_m}}^2 \prod_i |u_{v_i}|^{2k_i}.$$
\end{proposition}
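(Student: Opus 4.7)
The plan is to restrict the Birkhoff-normalized Hamiltonian $H_{\rm Res}$ of \eqref{Ham2} to the invariant subspace $\bar\ell_S^{(a,p)}\times\bar\ell_S^{(a,p)}$ (this is invariant by the completeness hypothesis through Proposition \ref{completa}), and then use the integrability hypothesis to cut down the surviving monomials to diagonal ones.

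First I would dispatch the quadratic piece. The term $K=\sum_{k\in\Z^n}|k|^2|u_k|^2$ restricts immediately to $\sum_{i=1}^{m}|v_i|^2|u_{v_i}|^2$, since by definition $u_k\equiv 0$ for $k\notin S$. This reproduces the first summand of the claimed formula.

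Next, for the degree-$2(q+1)$ part of \eqref{Ham2}, after restriction the multi-indices $\alpha,\beta\in(\Z^n)^{\mathbb N}$ with $|\alpha|=|\beta|=q+1$ have support contained in $S$. Using the bookkeeping of Formula \eqref{albet}, any such pair $(\alpha,\beta)$ corresponds (up to reordering within $u$-factors and within $\bar u$-factors) to an unordered list $k_1,\dots,k_{2q+2}$ of vectors in $S$ obeying the two constraints $\sum_k(\alpha_k-\beta_k)k=0$ and $\sum_k(\alpha_k-\beta_k)|k|^2=0$, that is to say a resonant list in the sense of Definition \ref{reson}. The integrability hypothesis on $S$ forces every such list to be integrable, so after reordering $k_{2i-1}=k_{2i}$ for $i=1,\dots,q+1$. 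Reading this back on the multi-index side, the multiset of ``$u$-indices'' equals the multiset of ``$\bar u$-indices'', i.e.\ $\alpha=\beta$. Hence only ``diagonal'' monomials $u^\alpha\bar u^\alpha=\prod_k|u_k|^{2\alpha_k}$ survive the restriction.

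Finally I would recognise the resulting sum as $A_{q+1}$. Parametrising an admissible $\alpha$ by its multiplicities $k_i:=\alpha_{v_i}$ for $i=1,\dots,m$ (so $\sum_i k_i=q+1$), the coefficient $\binom{q+1}{\alpha}^2$ becomes the multinomial squared $\binom{q+1}{k_1,\dots,k_m}^2$, and $u^\alpha\bar u^\alpha=\prod_i|u_{v_i}|^{2k_i}$. Comparing with the definition \eqref{symme} of $A_r$ yields exactly $A_{q+1}(|u_{v_1}|^2,\dots,|u_{v_m}|^2)$, which is the second summand. The only nontrivial conceptual step is the translation from the combinatorial integrability condition on the list $k_1,\dots,k_{2q+2}$ to the multi-index identity $\alpha=\beta$; the remainder of the argument is direct combinatorial bookkeeping, with no analytic difficulty since we are working with a finite polynomial expression on the invariant subspace.
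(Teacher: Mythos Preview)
Your proof is correct and follows exactly the approach the paper indicates (the paper's proof is the single sentence ``This follows immediately from Formula \eqref{Ham2} and the definitions''); you have simply spelled out the bookkeeping. One minor notational slip: the subspace on which $H_S$ lives is $\{u_k=0:\ k\in S^c\}$, not $\bar\ell_S^{(a,p)}$ (which in the paper denotes $\{u_k=0:\ k\notin{\rm Span}(S)\}$), but this does not affect the argument.
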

\begin{proof}
This follows immediately from Formula \eqref{Ham2}   and the definitions.
\end{proof}
 \subsection{Tangential sites in action--angle coordinates\label{cts}} 
  
We set \begin{equation}
\label{chofv}u_k:= z_k \;{\rm for}\; k\in S^c\,,\quad u_{v_i}:= \sqrt {\xi_i+y_i} e^{\ii x_i}= \sqrt {\xi_i}(1+\frac {y_i}{2 \xi_i }+\ldots  ) e^{\ii x_i}\;{\rm for}\;  i=1,\dots m,
\end{equation}  considering  the $\xi_i>0$ as parameters $|y_i|<\xi_i$ while $y,x,w:=(z,\bar z)$ are dynamical variables.  
\begin{definition}
 We  denote   by   $ \bf{\ell}^{(a,p)}$   the subspace of $\bf{\bar \ell}^{(a,p)}\times\bf{\bar \ell}^{(a,p)} $  generated by the indices in $S^c$ with coordinates  $w=(z,\bar z) $.
\end{definition}
 For  all $\e >0$ and for all 
\begin{equation}\label{Aro}
\xi \in A_{\ro}:= \{  \xi\,:\  \frac12 \ro \le \xi_i\le \ro \,\}\,,
\end{equation}
Formula \eqref{chofv} is a well known analytic and symplectic change of variables $\Psi^{(2)}_\xi$ in  the  domain
\begin{equation}\label{domain}  D_{a,p}(s,r)= D(s,r):= 
 \{   x,y,w\,:\   x\in \T^m_s\,,\  |y|\le r^2\,,\  \|w\|_{a,p}\le r\}\subset \T^m_s\times\C^m\times {\bf{\ell}}^{(a,p)}.
\end{equation} Here $\e>0$, $s>0$ and $ 0<r<\e/2$ are auxiliary parameters. $\T^m_s$ denotes the open subset of the complex torus $\T_{\C}^m:=\C^m/2\pi\Z^m$ where   $ x\in\C^m,\ |$Im$(x)|<s$. Moreover if \begin{equation}
\label{bapa}\sqrt{2 m} (\max(|v_i|)^{p} e^{ ( s+ a\max(|v_i|))} \e  < {\epsilon_0}\,,
\end{equation} the change of variables sends $D(r,s)\to B_{{\epsilon_0}}$ so we can apply it to our Hamiltonian.

We thus assume that the parameters $\e,\, r,\, s$  satisfy \eqref{bapa}.

Formula \eqref{chofv}  puts  in action angle variables  $(y;x)= (y_1,\dots, y_m;x_1,\dots, x_m) $ the tangential sites, close to the action $\xi= \xi_1,\dots, \xi_m$, which are parameters for the system.  The  symplectic form is now $ dy \wedge dx + i \sum_{k\in S^c} dz_k\wedge d \bar z_k $.   

 \medskip

Following \cite{Po} we study {\em regular} functions $F:A_{\ro }\times D_{a,p}(s,r)\to \C$, that is whose Hamiltonian vector field  $X_F$ is analytic from $D(s,r)\to \C^m\times\C^m\times\ell_{a,p}$. In the variables $\xi$ we require Lipschitz regularity.  We use the weighted norm:
\begin{equation}
\label{weno} \Vert X_F\Vert^\lambda_{s,r}= \sup_{  A_{\ro }\times D(s,r)}\|X_F\|_{s,r}+\lambda \sup_{\xi\neq \eta \in A_{\ro }\,,\;(x,y,w)\in D(s,r)}\frac{\|X_F(\eta)-X_F(\xi)\|_{s,r}}{|\eta-\xi|},  
\end{equation}  where $\lambda =  \ro $  and $$ \|X_F\|_{s,r}:= r^{-2}|\partial_x F|+s^{-1}|\partial_y F|+r^{-1}\|\partial_w F\|_{a,p}.$$
The different weights ensure that if $\Vert X_F\Vert^\lambda_{s,r}<\frac12$ then $F$ generates a close--to--identity symplectic change of variables from $D(r/2,s/2)\to D(r,s)$.
  \subsubsection{Quadratic Hamiltonians} We have the rules of Poisson bracket
\begin{equation}
\label{lzpb}\{y_i,y_j\}=\{x_i,x_j\}= 0,\ \{y_i,x_j\}=\delta^i_j,\ \{z_h,z_k\}=\{\bar z_h,\bar z_k\}=  0,\ \{\bar z_h,z_k\}=\ii \delta^h_k.
\end{equation}
If we define $w$ as the infinite row vector $w$ with coordinates $ z_h$  and then $\bar z_h$  and $J$ the standard skew symmetric matrix $J:=\begin{vmatrix}
0&-1\\1&0
\end{vmatrix}$ we have the Poisson bracket\footnote{The apex $t$ is the transpose}   $\{w^t ,w \}=\ii J $.  Thus a quadratic Hamiltonian $\mathcal Q(w)$   in the elements of $w$  represents by Poisson bracket  a linear transformation  on the space with basis $w$.  If $\mathcal Q(w)$ is real, the matrix of this linear transformation is purely imaginary,  thus it is convenient to denote it by   $\ii Q$ and write  $ad(\mathcal Q):=\{\mathcal Q(w),w^t\}= \ii Qw^t$ . The equations of motion are $\dot w =\ii w Q^t$. The matrix $Q$  is related to the quadratic expression by\footnote{The parentheses represent the scalar product in $\R$. }  
\begin{equation}\label{represQ}
\mathcal Q(w)= \frac 12 (w, wJQ^t)=-\frac1 2  wQJw^t.
\end{equation}
Quadratic Hamiltonians are closed under Poisson bracket and, by Jacobi's identity, if  $\mathcal Q_1(w),\mathcal Q_2(w)$ correspond to matrices $Q_1,Q_2$ then $\{\mathcal Q_1(w),\mathcal Q_2(w)\}$ corresponds to $[Q_1,Q_2]$. Moreover, a quadratic Hamiltonian $\mathcal Q$ has $\|X_{\mathcal Q}\|_{r,s}<\infty$ if and only if  its matrix $Q$ is such that $QJ$ is  a continuous symmetric linear operator from $\ell_{a,p}$ to itself.
  \section{Main dynamical results}  \label{tre}
\subsubsection{Generiticity conditions\label{gen}}    Our Theorems hold under some constraints on $S$ such as those of  Remark \ref{piro}. These constraints are expressed by the condition that the list of vectors $S$, thought of as a point in $\Z^{mn}$, does not lie in any of the varieties defined by a finite list of polynomial equations, called the {\em avoidable resonances}.
\smallskip

In order to explain this let us establish some simple language. \begin{definition}\label{gener} Given a list $\mathcal R:=\{P_1(\zeta),\dots, P_{U}(\zeta)\}$ of polynomials in $d$ vector variables $\zeta_i$, called {\em resonance polynomials},
we say that a list of    vectors  $S=\{v_1,\dots,v_m\}, v_i\in\C^{n }$ is  {\em generic}  relative to $\mathcal R$ if, for any list  $A=\{u_1,\ldots,u_{d}\}$   such that $u_i\in S,\ \forall i,\ u_i\neq u_j\ \text{if}\ i\neq j$, the evaluation of the resonance polynomials at $\zeta_i=u_i$ is non--zero. 
\end{definition}If $m$ is finite this condition is equivalent to requiring that $S$ (considered as a point in  $\C^{nm}$) does not belong to the algebraic variety where at least one of the resonance polynomials is zero.

In our specific case the condition of being generic for the tangential sites $S$
is expressed by a finite list of  non--zero polynomials with integer coefficients   depending on $d=  4q(n+1)$  vector variables $\zeta=(\zeta_1,\dots,\zeta_{d})$ with $\zeta_i=(\zeta_i^1,\ldots,\zeta_i^n) $. 
The explicit list of these resonances  (see Definition \ref{fincon}) depends on some  non trivial combinatorics, nevertheless it is easy to give a  (highly) redundant list of  inequalities out of which the resonances appear. There is a constant $C>0$ depending only on $q,n$ so that we can take as resonances the non--zero polynomials of the form:

\smallskip

i) {\em Linear inequalities}\quad For all non--zero  vectors $(a_1,\dots, a_{4q (n+1)})$ with $\ a_i\in\mathbb Z, \ |a_i|\leq C,\   $ we require that
$$\sum_{i=1}^{4q (n+1)}a_i\zeta_i\neq 0,$$

ii) {\em Quadratic inequalities}\quad Let $(\zeta_i,\zeta_j)=\sum_{h=1}^n\zeta_i^h\zeta_j^h$ be the {\em scalar products}. 
For all non zero matrices $\{a_{i,j}\}_{i,j=1}^{4 q (n+1)}$ with  $a_{i,j}\in\mathbb Z, \ |a_{i,j}|\leq C, $  we require
$$\sum_{i,j=1}^{4 q (n+1)}a_{i,j}(\zeta_i,\zeta_j)\neq 0.  $$

iii) {\em Determinantal inequalities}\quad   Consider $n$  linear combinations $u_h$ out of the list of elements $\mathcal L:=\{\sum_{i=1}^{4 q(n+1) }a_{h,i}\zeta_i,\  a_{h,i}\in\mathbb Z, \ |a_{h,i}|\leq C\}$. 

The determinantal resonances are contained in the list of the  formally non--zero expressions of type $\det(u_1,\ldots,u_n),\ u_i\in\mathcal L$.

\smallskip

 Given any $m\in \N,$ let  $S=\{v_1,\dots,v_m\}\in\nobreak  \Z^{nm}$ be a {\em generic} choice of the tangential sites. 
 \begin{theorem}\label{teo1}    For all $r,s,\varepsilon$ satisfying \eqref{bapa} and for all $\xi\in A_\ro$,  there exists   an analytic symplectic change of variables:
$$\Phi_\xi: ( y, x)\times (z,\bar z) \to (u, \bar u) $$   from $  D(s,r/2) \to B_{2\epsilon_0}$  such that the Hamiltonian \eqref{Ham} in the new variables  is analytic and has the form 
$$ H\circ\Phi_\xi= (\ome(\xi),y) +\sum_{k\in S^c}{\tilde\Ome}_k|z_k|^2 +\tilde{\mathcal Q}(\xi,w)+ \tilde P(\xi,y,x,w)\,,$$ where
\smallskip

\noindent i) {\bf Non--degeneracy:} \ $\ome_i(\xi)- |v_i|^2  $ is homogeneous of degree $q$.

The map $(\xi_1,\ldots,\xi_m)\mapsto (\ome_1(\xi),\ldots,\ome_m(\xi))$ is a diffeomorphism for  $\xi$ outside a real algebraic hypersurface. 
\smallskip

\noindent ii) {\bf Asymptotic of the normal frequencies:} We have ${\tilde\Ome}_k= |k|^2 +\sum_{i=1}^m |v_i|^2 L^{(i)}(k)$ where $L^{(i)}(k)\in \Z$ satisfy $|L^{(i)}(k)|\leq 4nq$.
\smallskip

\noindent iii) {\bf Reducibility:}\ The matrix      $\tilde{ Q}(\xi)$ which represents the quadratic form	$\tilde{\mathcal Q}(\xi,w)$(see formula \eqref{represQ})   depends  only on the variables $\xi$  
and all its entries are homogeneous of degree $q$ in these variables.  It is   block--diagonal  and satisfies  the following properties: 

\quad   All of the blocks except a finite number  are self adjoint and of dimension $\leq n+1$; the remaining finite number of blocks are of dimension $\leq 2n$.
 
\quad All the (infinitely many) blocks are  chosen from a finite list of matrices $\mathcal M(\xi)$.
\smallskip

\noindent iv)  {\bf Smallness:} \ If $\e^3<r<\e/2$,  the perturbation $\tilde P$ is  small, more precisely 
we have   the bounds:
\begin{equation}\label{pertu}
\Vert X_{\tilde P}\Vert^\lambda_{s,r}\leq C (\e^{2q-1} r + \e^{2q+3} r^{-1}) \,, 
\end{equation}  where $C$ is independent of $r,\e$. 
\smallskip

\end{theorem}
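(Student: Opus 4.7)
The plan is to follow the three-stage reduction outlined in the introduction: (1) perform one step of resonant Birkhoff normal form to kill all non-resonant degree $2(q+1)$ monomials; (2) restrict to the invariant subspace $\bar\ell^{(a,p)}_S$ and introduce action-angle coordinates on the tangential sites; (3) isolate the quadratic-in-$w$ part of the resulting Hamiltonian and reduce it to constant coefficients by an explicit (non close-to-identity) symplectic change of variables. Step (1) is already recorded in the preliminaries: one obtains $H=K+H^{(2q+2)}_{\rm res}+P^{2(q+2)}$ with the explicit bound $\|X_{P^{2(q+2)}}\|_{a,p}\lesssim \e^{2q+3}$ on balls of radius $\e$. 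Step (2) is the change of variables $\Psi^{(2)}_\xi$ from \eqref{chofv}; the combinatorial condition \emph{completeness} of $S$ (Proposition \ref{completa}) is needed so that this restriction is consistent with the dynamics, and expanding $H^{(2q+2)}_{\rm res}$ in powers of $y,w$ yields the splitting $H\circ\Psi^{(2)}_\xi=(\omega(\xi),y)+\sum_k|k|^2|z_k|^2+\mathcal{Q}(\xi,x,w)+\text{(higher order)}$. The first step also identifies $\omega(\xi)$ as $|v_i|^2$ plus a degree-$q$ homogeneous polynomial in $\xi$ coming from $A_{q+1}$ (cf.\ Proposition \ref{compleH}), so part (i) of the theorem follows after checking that the Jacobian does not vanish identically, which is a polynomial non-vanishing condition on $\xi$ generic in the sense of subsection 1.4.

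The heart of the proof is part (iii). The normal form $\mathcal{Q}(\xi,x,w)$ is the degree-$2$-in-$w$ part of $H^{(2q+2)}_{\rm res}$ after the action-angle substitution; its coefficients are trigonometric polynomials in $x$ with coefficients that are polynomial in $\xi$. Via the Poisson bracket representation \eqref{represQ} this is equivalent to an infinite matrix $Q(\xi,x)$ acting on $w$. Here I would follow exactly the paper's combinatorial strategy: encode the nonzero entries of $Q$ by an edge between $k,k'\in S^c$ whenever there exist indices in $S$ realizing a resonant list of length $2q+2$ linking them, producing the graphs $\tilde\Gamma_S$ and $\Lambda_S$. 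The connected components of these graphs correspond to block-diagonal blocks of $Q$ and of its counterpart $F^{0,1}$-model. The aim is then to show that, after imposing a finite list of genericity constraints on $S$ (the six constraints summarized in Definition \ref{fincon}), each connected component has uniformly bounded size — bounded by $n+1$ except for finitely many exceptional components bounded by $2n$. This bound comes from analysing the associated geometric graph $\Gamma_S^{geo}\subset\R^n$ and from Theorems \ref{sunto} and \ref{oneone}: an overdetermined system of $\geq n+1$ linear/quadratic equations in the coordinates of $S$ and the root vertex has no solutions for generic $S$, unless the equations are identically compatible, a pathology ruled out by the colored-rank analysis.

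Having bounded the blocks, one constructs the reducing transformation explicitly by Formula \eqref{labella}, exploiting the isomorphism (Proposition \ref{coqe2}, Corollary \ref{cabel}) between connected components of $\Lambda_S$ and of $\Gamma_S$: on each component one can write down a finite symplectic change of variables, independent of $\xi$ in its angular part, that absorbs all the $x$-dependence into constants depending polynomially and homogeneously (of degree $q$) on $\xi$. Because the blocks are uniformly bounded and the algebraic formulas involve only finitely many tangential indices per block, this change of variables is analytic on $\T^m_s\times B_r\times\{\|w\|_{a,p}\leq r\}$ and symplectic; it sends $\mathcal{Q}(\xi,x,w)$ to $\tilde{\mathcal{Q}}(\xi,w)$ with the prescribed block structure, and modifies $\sum|k|^2|z_k|^2$ into $\sum\tilde\Omega_k|z_k|^2$ with $\tilde\Omega_k=|k|^2+\sum_i|v_i|^2L^{(i)}(k)$, the bound $|L^{(i)}(k)|\leq 4nq$ coming directly from counting how many $2q+2$-resonance relations the index $k$ can participate in (part (ii)).

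For part (iv), the perturbation $\tilde P$ collects three contributions: the higher-order part $H^{(\geq 3)}$ in $(y,w)$ coming from the Taylor expansion in action-angle (size $\lesssim \e^{2q-1} r$ in the weighted norm thanks to the $\e^{2q}$ scaling of the tangential actions and one extra factor of $r$ or $y/\xi$), the original remainder $P^{2(q+2)}$ (size $\lesssim \e^{2q+3}r^{-1}$ after rescaling through the $r^{-1}$ weight on the $w$-component), and the transform of these under the explicit change of variables of step (iii), which is uniformly bounded because the blocks and the algebraic formulas \eqref{labella} are uniformly controlled. Summing these yields \eqref{pertu}. The main obstacle, and the step absorbing almost all of the work of the paper, is the block-boundedness in (iii): everything else is a bookkeeping exercise once the graph-theoretic classification of Sections \ref{Gra}--\ref{Magt} and the explicit reducing map are in hand.
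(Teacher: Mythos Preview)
Your proposal is correct and follows essentially the same three-stage approach as the paper: $\Phi_\xi=\Psi^{(1)}\circ\Psi^{(2)}_\xi\circ\Psi^{(3)}$ (Birkhoff step, action--angle, then the explicit angle-removing map of Formula \eqref{labella}), with item (i) coming from the Jacobian computation (Corollary \ref{diffeo}), items (ii)--(iii) from Proposition \ref{reteo} together with the graph classification of Theorems \ref{sunto}--\ref{oneone}, and item (iv) from the perturbation estimates of Proposition \ref{gliord} transported through $\Psi^{(3)}$. One small correction: the bound $|L^{(i)}(k)|\le 4nq$ does not come from ``counting resonance relations $k$ can participate in'' but rather from the fact that $L(k)$ is obtained by lifting a path of length at most $2n$ in the combinatorial graph $\GA$, each edge contributing an element of $X_q$ with $\ell^1$-norm at most $2q$ (cf.\ the beginning of the proof of Proposition \ref{reteo}).
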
 
\begin{proof}
See \S  \ref{reteo}.
\end{proof}
\begin{remark}
At first inspection it may seem that the estimate on $X_P$ is {\em too small} to be possible. Indeed $P$ should contain  terms from $P^{(2q+4)}$, which should contribute to $X_P$ a term of order $\e^{2q+4}r^{-2}$. In fact for a {\em generic} choice of $S$ these terms are constant so they do not enter in the vector field.
\end{remark}
\begin{remark} The list of matrices $\mathcal M(\xi)$ is constructed   in Section \ref{lapro}, cf. Definition \ref{bfnu2}.

It  contains at most $2n \cdot (2q)^{m-1}!$ matrices distributed in at most $2n \cdot (2q)^{4nq}!$ orbits under  the group of permutations of the variables $\xi_i$. 

 In Example \ref{less} we exhibit $\mathcal M(\xi)$  in the case $q=1$, $n=2$.
\end{remark}
\subsubsection{Stable regions for the normal form }  An interesting issue is  to see if one can
  use   arithmetic constraints such as those of \cite{GYX},  to simplify those matrices  in $\mathcal M$ which are not self--adjoint.
   In Proposition \ref{blocchini} we show that, for $n\leq 2$ and all $q $ it is possible to choose the tangential sites so that the matrices reduce to only  $2\times 2$ matrices independently of   $m$.    This requires a notion of generic, of arithmetic (or probabilistic) nature,  which we call the {\em x--constraints} discussed in \S  \ref{arcon}.
    One may deduce the following very useful property, proved in  \S \ref{rero}:
\begin{proposition}\label{teo2}
Under the hypotheses of Proposition \ref{blocchini}, there exists an open region $O_{\ro }\subset \A_{\ro }$    where all the  non self--adjoint matrices  in $\mathcal M$ have  real and   distinct   eigenvalues.
\end{proposition}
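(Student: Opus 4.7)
The plan is to reduce the claim to a finite polynomial inequality check. By Proposition \ref{blocchini}, under the hypothesis $n\le 2$ every block in $\mathcal M$ is at most $2\times 2$, and by Theorem \ref{teo1} iii) each matrix entry is a homogeneous polynomial of degree $q$ in the parameters $\xi$. For a $2\times 2$ matrix $M=\begin{pmatrix} a & b \\ c & d\end{pmatrix}$ the condition of having real and distinct eigenvalues is the strict open inequality
\[
\Delta(M)(\xi) := (a-d)^2 + 4bc \;>\; 0.
\]
Since $\mathcal M$ is a finite list, it suffices to exhibit a single point $\xi^\star\in A_\ro$ at which $\Delta(M)(\xi^\star)>0$ for every non self--adjoint $M\in\mathcal M$: continuity together with homogeneity and finiteness then give an open neighborhood $O_\ro\subset \A_\ro$ on which all the relevant discriminants remain positive.

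The core step is to prove, for each non self--adjoint $M\in\mathcal M$, that the homogeneous polynomial $\Delta(M)(\xi)$ of degree $2q$ is not identically non--positive on $A_\ro$. Here I would exploit the explicit description of the exceptional blocks given in Section \ref{lapro} and Definition \ref{bfnu2}: each such block is encoded by a combinatorial graph $\mathcal A$ with at most $2n$ vertices, and its entries $b,c,a,d$ are explicit rational combinations of monomials $\xi^\alpha$ determined by the combinatorics of $\mathcal A$. One then verifies, by choosing a distinguished direction in the positive orthant (for instance letting a single $\xi_{i_0}$ dominate the rest), that the polynomial $(a-d)^2+4bc$ has a leading monomial with strictly positive coefficient in that direction, so $\Delta(M)>0$ on a nontrivial open cone intersecting $A_\ro$. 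The x--constraints from \S \ref{arcon}, which rule out integer solutions of the graph equations, are exactly what is needed to preclude the pathological algebraic coincidences that would otherwise force $\Delta(M)\equiv 0$ or make $\Delta(M)$ a negative--definite form.

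The main obstacle I expect is precisely this second step: a generic non self--adjoint $2\times 2$ matrix can of course have negative discriminant (take the skew--symmetric block $\bigl(\begin{smallmatrix}0&-1\\ 1&0\end{smallmatrix}\bigr)$, whose $\Delta=-4$), so the desired positivity is a genuine statement about the shape of the blocks produced by the NLS Hamiltonian. The argument must therefore open the black box of Section \ref{lapro} and use that, after the x--constraints, the only exceptional blocks surviving are those whose off--diagonal product $bc$ admits real square roots for $\xi$ in an open subset of the positive orthant; this can be checked graph by graph using Formula \eqref{bacosG} and the explicit formul\ae\ in Section \ref{lapro}. Once the polynomial check is established for each of the finitely many exceptional matrices, the finite intersection of the open sets $\{\xi\in A_\ro:\Delta(M)(\xi)>0\}$ provides the region $O_\ro$ claimed in the statement.
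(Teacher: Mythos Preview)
Your overall architecture is correct and matches the paper: reduce to finitely many $2\times 2$ red--edge blocks via Proposition \ref{blocchini}, write down the discriminant of each, and show there is an open cone in the positive orthant where every discriminant is strictly positive. The homogeneity/finiteness/continuity reasoning you give for passing from a single point to an open region $O_\ro$ is also fine.

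There is, however, a genuine gap at the step you flag yourself. You write that ``one then verifies \ldots\ that $(a-d)^2+4bc$ has a leading monomial with strictly positive coefficient in that direction,'' but you do not carry out this verification, and it is the entire content of the proposition. For a red edge $\ell$ the block (see Formula \eqref{24} with $\eta(\ell)=-2$) is, up to the factor $q+1$,
\[
\begin{pmatrix} 0 & -a(\ell) \\ a(\ell) & b(\ell)\end{pmatrix},
\]
so the discriminant is $b(\ell)^2-4a(\ell)^2$, with $a(\ell),b(\ell)$ given by \eqref{perco}. Your worry is exactly on target: the off--diagonal product $bc=-a(\ell)^2$ is \emph{always} nonpositive, so positivity of the discriminant is equivalent to $|b(\ell)|>2|a(\ell)|$, which certainly fails on part of the positive orthant (for $q=1$ the discriminants are $\xi_i^2+\xi_j^2-14\xi_i\xi_j$). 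The paper proves positivity by computing, for each red edge $\ell$, the lexicographically leading monomial of $b(\ell)^2-4a(\ell)^2$ and checking its coefficient is strictly positive; this requires a short case analysis (whether or not $\ell_1=0$, and a separate argument for $q=1$), and then a lemma showing that the region $\{\xi:\ \xi_1\gg\xi_2\gg\cdots\}$ where the leading monomial dominates is a nonempty open cone. None of this is done in your proposal.

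A second point: you invoke the x--constraints of \S\ref{arcon} as what ``preclude the pathological algebraic coincidences that would otherwise force $\Delta(M)\equiv 0$ or make $\Delta(M)$ a negative--definite form.'' This is a misattribution. The arithmetic constraints are used \emph{only} to guarantee that all non self--adjoint blocks are $2\times 2$ (i.e.\ Proposition \ref{blocchini}); once that is established, the positivity of the discriminants is a purely algebraic computation in the $\xi$--variables from the explicit formulas \eqref{cl}, \eqref{perco} and does not use the x--constraints at all. So your sentence about ``off--diagonal product $bc$ admits real square roots'' should be replaced by the concrete computation of the leading term of $b(\ell)^2-4a(\ell)^2$.
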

\begin{proof}
The region is the one where all discriminants are strictly positive, we show in  \S \ref{rero} that it is a non empty open cone. 
\end{proof}
It then follows fairly easily from this result and Theorem \ref{teo1}:

\begin{corollary}\label{ficof}
There exists an algebraic hypersurface $\A$ such that on the open region $A_{\ro }\setminus \A$ there is an analytic  symplectic change of coordinates taking $\mathcal Q$  into a diagonal form with constant coefficients plus a form $\bar{\mathcal Q}$  with constant coefficients depending only on finitely many variables  $z_k,\bar z_k,\ k\in A$.

 The change of variables does not depend on $(x,y)$, it is linear in $w$ and analytic in $\xi$. The Hamiltonian is then
 \begin{equation}\label{hfin}  H_{\rm fin}= (\ome(\xi),y) + \sum_{k\in S^c} \bar\Omega_k |z_k|^2 +\bar{\mathcal Q}+ P(\xi,x,y,w),\end{equation}
 where 
 $$ \bar\Omega_k =\begin{cases} \tilde\Ome_k + \lambda_k(\xi)\,,\quad \forall k\in S^c\setminus A,\\
\tilde\Ome_k,\ k\in A\end{cases}$$  
i) The correction $\lambda_k(\xi)$   is chosen in a finite list, say
\begin{equation}\label{autov}
\lambda_k(\xi) \in \{ \lambda^{(1)}(\xi),\dots,  \lambda^{(K)}(\xi)\}\,,\quad K:= K(n,m),
\end{equation}
of different (real) analytic functions of $\xi$.

ii)\ The functions $\lambda^{(i)}(\xi)$ are homogeneous of degree $q$ in $\xi$. Let $\mathfrak A_\ro$ be a tubular neighborhood of $\mathcal A$ with radius of order $\ro$. For $\xi\in A_{\ro}\setminus \mathfrak A_\ro$ the $\lambda^{(i)}(\xi)$   satisfy  the bounds
\begin{equation}
\label{linh} |\lambda^{(i)}(\xi)|\leq C\e^{2q} \,,\quad c \e^{2q}\leq |\lambda^{(i)}(\xi)\pm \lambda^{(j)}(\xi)|\leq C\e^{2q}\,, \quad 
|\nabla_{\xi}\lambda^{(i)}(\xi)| \leq  C\e^{2q-2}. 
\end{equation} 
iii)\  For $\xi\in A_{\ro}\setminus \mathfrak{A}_{\varepsilon}$ item { iv)} of Theorem \ref{teo1} holds. 

iv)\ $\bar{\mathcal Q}$ is a quadratic Hamiltonian with constant coefficients in finitely many of the variables $z_k,\bar z_k,\ k\in S^c$. 

v)\ 
For $n=1,2$ and all $q$  it is possible to choose the tangential sites so that  $\bar{\mathcal Q} $ is formed by $2\times 2 $  blocks which (outside  the hypersurface $\A$) are semisimple with distinct eigenvalues.  The region in which these eigenvalues are real is open non empty and on this region the real eigenvalues are given by analytic functions $\mu_k(\xi)$ so that  we may  write 
\begin{equation}\label{hfin1}  H_{\rm fin}= (\ome(\xi),y) + \sum_{k\in S^c} \bar\Omega_k |z_k|^2 + P(\xi,x,y,w),\end{equation}
with $ \bar\Omega_k =  \tilde\Ome_k + \mu_k(\xi)\,,\quad \forall k\in A$.
\end{corollary}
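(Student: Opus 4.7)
The plan is to apply the linear symplectic reduction theory of quadratic Hamiltonians block-by-block to the output of Theorem \ref{teo1}, supplemented by Proposition \ref{teo2} in the low-dimensional case. By Theorem \ref{teo1} iii), $\tilde Q(\xi)$ is block-diagonal with blocks drawn from the finite list $\mathcal M(\xi)$. Since only finitely many blocks are non-self-adjoint, and these are supported on a finite set $A\subset S^c$ of normal sites, we split $\tilde{\mathcal Q}(\xi,w) = \tilde{\mathcal Q}_{\rm SA}(\xi,w) + \bar{\mathcal Q}(\xi,w|_A)$, where $\tilde{\mathcal Q}_{\rm SA}$ collects all the self-adjoint blocks (acting on $z_k,\bar z_k$ for $k\in S^c\setminus A$) and $\bar{\mathcal Q}$ is the residual finite-dimensional quadratic form. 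The strategy is to symplectically diagonalize $\tilde{\mathcal Q}_{\rm SA}$ while leaving $\bar{\mathcal Q}$ untouched in the general case.

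For each matrix $M_i$ in $\mathcal M(\xi)$ that is self-adjoint with respect to the symplectic form, the standard theory of Hamiltonian matrices provides a symplectic transformation $U_i(\xi)$ diagonalizing $M_i$ over the reals, provided its eigenvalues are distinct. Since the entries of $M_i$ are polynomial (homogeneous of degree $q$) in $\xi$, the set where eigenvalues coincide is an algebraic hypersurface in $\xi$-space; the union of these finitely many loci defines the hypersurface $\mathcal A$. Outside $\mathcal A$ the block-diagonal map $U(\xi):=\bigoplus_i U_i(\xi)$ is symplectic, linear in $w$, independent of $(x,y)$, and analytic in $\xi$. Applying $U(\xi)$ transforms $\tilde{\mathcal Q}_{\rm SA}$ into $\sum_{k\in S^c\setminus A}\lambda_k(\xi)|z_k|^2$, each $\lambda_k(\xi)$ lying in the finite collection $\{\lambda^{(1)}(\xi),\dots,\lambda^{(K)}(\xi)\}$ of characteristic roots of the self-adjoint matrices in $\mathcal M$. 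Homogeneity of degree $q$ is inherited from $\tilde Q$. The bounds \eqref{linh} follow by scaling: on $A_\ro\setminus\mathfrak A_\ro$ we stay uniformly at distance of order $\ro$ from the discriminant locus, which controls both the size of the eigenvalues, of their differences, and of their $\xi$-gradients.

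For the smallness of the transformed perturbation (item iii), note that $U(\xi)$ and its Lipschitz derivative in $\xi$ are uniformly bounded on $A_\ro\setminus\mathfrak A_\ro$, so the estimate \eqref{pertu} of Theorem \ref{teo1} iv) carries over up to multiplicative constants. For claim v), Proposition \ref{blocchini} (the hypothesis of Proposition \ref{teo2}) guarantees that in dimensions $n=1,2$ all exceptional blocks can be arranged to be $2\times 2$. Proposition \ref{teo2} then provides a non-empty open subcone of $A_\ro$ on which each such block has real distinct eigenvalues; on this subcone the implicit function theorem applied to the characteristic polynomial yields real analytic eigenvalue functions $\mu_k(\xi)$ and an analytic symplectic diagonalizer, absorbing $\bar{\mathcal Q}$ into the diagonal sum to produce \eqref{hfin1}.

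The main obstacle is controlling the diagonalizing change $U(\xi)$ uniformly in $\xi$: it is not close to identity, it degenerates as one approaches $\mathcal A$, and in the general $n\geq 3$ setting one cannot hope to reduce the residual non-self-adjoint exceptional blocks, hence the need to keep $\bar{\mathcal Q}$ in \eqref{hfin}. Excising the tubular neighborhood $\mathfrak A_\ro$ of radius $\ro$ keeps all eigenvalue gaps bounded below by a multiple of $\varepsilon^{2q}$, which is precisely what is needed to ensure that $U(\xi)$ has well-controlled operator norm and Lipschitz constant in $\xi$ and that the perturbation bound of item iv) of Theorem \ref{teo1} survives.
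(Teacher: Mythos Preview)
Your proof is correct and follows essentially the same approach as the paper's argument in \S\ref{rero}: block-diagonalize the self-adjoint blocks (those without red edges, cf.\ Lemma~\ref{autoag}) using the finiteness of $\mathcal M(\xi)$, leave the finitely many red-edge blocks as $\bar{\mathcal Q}$, and invoke Propositions~\ref{blocchini} and~\ref{teo2} for item v). The paper makes two points slightly more explicit than you do: the hypersurface $\mathcal A$ is needed only for \emph{analyticity} of the diagonalizer in $\xi$ (self-adjoint matrices are always diagonalizable; one excludes the locus where eigenvalues that are not identically equal happen to coincide), and the infinite direct sum of block diagonalizers defines a bounded operator on $\ell^{(a,p)}$ precisely because only finitely many distinct block types occur.
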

\begin{proof}
See Section \ref{rero}.
\end{proof} 

\section{A normal form}\label{quattro}
{\em In this section we make a preliminary study of the Hamiltonian $H_{res}$ and introduce some simple constraints on the tangential sites $S$, this enables us to define our normal form.}
\begin{definition}\label{divor}
We call $x,y,w$ {\em dynamical variables}. We give degree $0$ to the angles $x$, $2$ to $y$ and $1$ to $w$.

\end{definition}

We use the degree only for handling dynamical variables, as follows. We develop  in  Taylor expansion, in particular since $y$ is small with respect to $\xi$ we develop $\sqrt{\xi_i+y_i}= \sqrt{\xi_i}(1+\frac{y_i}{2\xi_i}+\ldots)$ as a series in $\frac{y_i}{\xi_i}$ we then develop the entire Hamiltonians $H, H_{\rm Res}$ as series in $ y,w$.  
\smallskip

\begin{definition}[Normal form]\label{norfo}
We   separate $H_{\rm Res} +P^{2(q+2)}(u)=H= N+P$  where the  {\em normal form}    $N$     collects all the terms of $H_{\rm Res}$ (as series in $ y,w$) of degree $\leq 2$ in the variables   $y,w$. 

The series $P$ collects all terms of $P^{2(q+2)}(u)$ and  all the terms of $H_{\rm Res}$   of degree $> 2$ in the variables   $y,w$.  
\end{definition}

\smallskip

 It is easily seen that $H$, hence $P$, depend analytically on all the variables $\xi,y,x,w$ in the domain $A_{\ro }\times D(r,s)$.

In the new variables:
\begin{equation}\label{moma0}
M=\sum_i\xi_i v_i+\sum_i   y_i v_i+\sum_{k\in S^c}k|z_k|^2,\quad L=\sum_i\xi_i  +\sum_i   y_i  +\sum_{k\in S^c} |z_k|^2\,,\end{equation}
$$ \sum_{k\in \Z^n}|k|^2 u_k \bar u_k=K=(\ome_0,\xi+y)+\sum_{k\in S^c} |k|^2|z_k|^2\,,\quad \ome_0=(|v_1|^2,\dots,|v_m|^2). $$
\begin{remark}
\label{rinn}The terms $\sum_i\xi_i$, $\sum_i\xi_i v_i$ and $ \sum_i\xi_i |v_i|^2$ are constant and can be dropped, renormalizing the three quantities $M,L,K$ (momentum, mass and quadratic energy).
\end{remark}

We summarize  the commutation rules:
\begin{equation}\label{moma}
\{M,y_h\}=\{L,y_h\}=\{K,y_h\}=0,\  \{M,x_h\}=v_h x_h,\  \{L,x_h\}=x_h,\   \{K,x_h\}=|v_h|^2.
\end{equation}
 $$\{M,z_k\}=\ii k z_k,\ \{L,z_k\}=\ii  z_k,\ \{K,z_k\}=\ii |k|^2 z_k,$$ $$  \{M,\bar z_k\}=-\ii k \bar z_k,\ \{L,\bar z_k\}=-\ii\bar  z_k,\ \{K,\bar z_k\}=-\ii |k|^2\bar z_k.  $$  
We formalize the momentum and mass by  two linear maps.
\begin{equation}
\label{MMM}\pi:\Z^m\to {\rm Span}(S),\ \pi(e_i)=v_i,\quad \text{momentum};\quad \eta:\Z^m\to \Z ,\ \eta(e_i)=1\quad  \text{mass}.
\end{equation}
where $e_1,\dots,e_m$ are the elements of the standard basis of the lattice $\Z^m$. 
\begin{lemma}
Each  monomial $e^{\ii (\nu,x)}y^l z^\alpha\bar z^\beta$  is an eigenvector  of the linear operators\footnote{ Given a polynomial $P$, we denote by $ad(P)$ the linear operator that associates to each polynomial $A$ the polynomial $\{P,A\}.$}  $ad(M)$ and $ad(L)$ with eigenvalues (i.e. with momentum and mass) given by \begin{equation}\label{preei}
\pi(\nu)+\sum_{k\in S^c} (\alpha_k-\beta_k)k\,,\qquad \eta(\nu)+\sum_{k\in S^c} (\alpha_k-\beta_k)\,.
\end{equation}

\end{lemma}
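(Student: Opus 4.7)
The plan is a routine application of the derivation property of the Poisson bracket, factor by factor on the monomial. Throughout, I use the elementary rules collected in \eqref{lzpb} and \eqref{moma}, together with the formula \eqref{moma0} for $M$ and $L$ in the new coordinates.

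Since $M$ and $L$ are independent of the angles, $\{M, y_h\} = \{L, y_h\} = 0$, and so the factor $y^l$ is Poisson-central with respect to both $M$ and $L$ and contributes nothing to the eigenvalue. For the angular factor, series expansion of the exponential together with Leibniz gives
\[
\{M, e^{\ii(\nu, x)}\} = \ii \Bigl(\sum_h \nu_h\, \{M, x_h\}\Bigr) e^{\ii(\nu, x)} = \ii\, \pi(\nu)\, e^{\ii(\nu, x)},
\]
where $\{M, x_h\} = v_h$ follows from $M = \sum_i v_i y_i + \cdots$ together with $\{y_i, x_j\} = \delta_{ij}$, and the last equality is the definition of $\pi$ from \eqref{MMM}. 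Replacing $v_h$ by $1$ gives the analogous identity for $L$, with $\eta(\nu)$ in place of $\pi(\nu)$.

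For the normal-site factors, repeated use of Leibniz on $z^\alpha \bar z^\beta$ combined with $\{M, z_k\} = \ii k z_k$ and $\{M, \bar z_k\} = -\ii k \bar z_k$ yields
\[
\{M, z^\alpha \bar z^\beta\} = \ii \Bigl(\sum_{k\in S^c} (\alpha_k - \beta_k)\, k\Bigr) z^\alpha \bar z^\beta,
\]
and the corresponding formula for $L$ is obtained by replacing $k$ by $1$ throughout. A final application of Leibniz to the full product $e^{\ii(\nu, x)} y^l z^\alpha \bar z^\beta$ sums the three contributions and produces the eigenvalues claimed in \eqref{preei} (up to the universal factor of $\ii$ implicit in the statement, already present in the bracket rules of \eqref{moma}).

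There is no substantive obstacle: the only point deserving explicit mention is that the derivation property distributes across the infinite product appearing in $z^\alpha\bar z^\beta$, which is immediate because each multi-index $\alpha,\beta$ has finite support, so only finitely many factors actually intervene in each Leibniz expansion.
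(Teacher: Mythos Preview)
Your proof is correct and follows exactly the approach the paper indicates: the paper's own proof is a one-line sketch (``compute $\{M,\cdot\}$ and $\{L,\cdot\}$ using \eqref{moma0} and the Poisson bracket rules''), and you have simply carried out that computation in detail via the Leibniz rule factor by factor. Your remark about the implicit factor of $\ii$ and the finite-support observation for $\alpha,\beta$ are both apt clarifications.
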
\begin{proof}
This follows computing $\{M,e^{\ii (\nu,x)}y^l z^\alpha\bar z^\beta\}$ and $\{L,e^{\ii (\nu,x)}y^l z^\alpha\bar z^\beta\}$  using  Formulas  \eqref{moma0} and the rules of Poisson bracket.
\end{proof}

\begin{remark}
A monomial Poisson commutes with $M$ and $L$ if and only if the momentum and mass are zero, that is $\pi(\nu)=-\sum_{k\in S^c} (\alpha_k-\beta_k)k\,,\  \eta(\nu)=-\sum_{k\in S^c} (\alpha_k-\beta_k)$.
\end{remark}
\smallskip

  \subsubsection{The normal form $N$\label{thenormalform}}  Our next task is to describe the Hamiltonian $N$ of Definition \ref{norfo}, provided that $S$ satisfies some basic constraints.  This is done in Proposition \ref{quadrat}.

$N$ is described in terms of a list of vectors, called {\em edges} since they will appear as edges of  a  graph describing the non--diagonal elements in $ad(N)$.
 \begin{definition}[edges] \label{edges}
  Consider the elements \begin{equation}
\label{glied}X_q:=\{\ell:=\sum_{j=1}^{2q} \pm  e_{i_j}=\sum_{i=1}^m \ell_ie_i,\  \quad \ell\neq 0,-2e_i\; \forall i\,,\quad \eta(\ell)\in \{0,-2\}\}.
\end{equation}  Notice the {\em mass constraint}   $\sum_i\ell_i=\eta(\ell)\in \{0,-2\}$.  We call all these elements respectively the {\em black, $\eta(\ell)=0$} and {\em red $\eta(\ell)=- 2$}    {\em edges}  and denote them by $X_q^0,X_q^{-2}$ respectively.

\end{definition}\begin{example}
For $q=1$ we have $e_i-e_j,-(e_i+e_j),\ i\neq j.   $  For $q=2$  we   have all the terms for $q=1$  and 
 $e_i- e_j-e_a-e_b,, \  2e_i-2e_j,\ -3e_i+e_j,\ i\neq j,a,b.  $  
\end{example}

 We start to impose a list of linear and quadratic inequalities on the choice of $S$ which will be justified in Proposition \ref{quadrat}.
 \begin{constraint}\label{co1}\begin{enumerate}
\item We assume   that  $\sum_{j=1}^m n_j v_j \neq 0$ for all  $n_i\in\Z,\,\sum_in_i=0,\ 1<\sum_i|n_i|\leq 2q+2$. 

\item $|\sum_in_iv_i|^2-\sum_in_i|v_i|^2\neq 0$ when $n_i\in\Z,\,\sum_in_i=1,\ 1<\sum_i|n_i|\leq 2q+1$.

\item We assume that  $\sum_{j=1}^{m}\ell_j v_j\neq 0 $, when $u:=\sum_{j=1}^{m}\ell_j e_j$ is either an edge or a sum or difference  of two distinct edges.

\item  $2\sum_{j=1}^{m}\ell_j |v_j|^2+|\sum_{j=1}^{m}\ell_j v_j|^2\neq 0$ for all edges  $\ell=\sum_{j=1}^{m}\ell_j e_j$ in $X_q^{-2}$.
\end{enumerate}
\end{constraint}
\begin{lemma}
Constraint \ref{co1}i) is an {\em integrability} constraint. Constraint \ref{co1}ii) is the {\em completeness} constraint. 
Constraint iii) means that an edge $\ell=\sum_{j=1}^{m}\ell_j e_j$ is determined by the associated vector $\pi(\ell)=\sum_{j=1}^{m}\ell_j v_j$. \end{lemma}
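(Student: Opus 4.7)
The lemma decouples into three essentially independent sufficiency statements, one per item of Constraint \ref{co1}. My plan in each case is to reformulate the target geometric property (integrability, completeness, injectivity of $\pi$ on edges) as a linear or quadratic dependence among the $v_j$ with bounded integer coefficients, and then recognize this dependence as precisely what the relevant item of Constraint \ref{co1} rules out.

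For (i), I would invoke the sufficient condition in Remark \ref{piro}: $S$ is integrable provided that whenever the combination $e_{i_1}+\dots+e_{i_{q+1}}-(e_{i_{q+2}}+\dots+e_{i_{2q+2}})$ is nonzero, the analogous $v$-combination is also nonzero. Collecting terms by setting $n_j=\#\{k\le q+1:i_k=j\}-\#\{k\ge q+2:i_k=j\}$ rewrites this as: for every nonzero $n\in\Z^m$ with $\sum_j n_j=0$ and $\sum_j|n_j|\le 2q+2$, one has $\sum_j n_j v_j\ne 0$. Since $\sum_j n_j=0$ together with $n\ne 0$ forces $\sum_j|n_j|\ge 2$, this is exactly Constraint \ref{co1}(i).

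For (ii), I would use the completeness criterion of Proposition \ref{completa}(i): $S$ fails completeness iff some $w\in\Z^n\setminus S$ completes $2q+1$ tangential vectors $v_{i_1},\dots,v_{i_{2q+1}}\in S$ to a resonant list. Isolating $w$ on one side of the momentum and of the energy equations of resonance yields integers $m_j$ with $\sum_j m_j=1$, $\sum_j|m_j|\le 2q+1$, $w=\sum_j m_j v_j$, and $|w|^2=\sum_j m_j|v_j|^2$. The subcase $\sum_j|m_j|=1$ would make $m=e_k$ for some $k$, hence $w=v_k\in S$, contradicting $w\notin S$. Thus $\sum_j|m_j|>1$, and substituting $w=\sum_j m_j v_j$ into $|w|^2=\sum_j m_j|v_j|^2$ produces $|\sum_j m_j v_j|^2-\sum_j m_j|v_j|^2=0$, precisely the relation excluded by Constraint \ref{co1}(ii). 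This is the contrapositive of what is claimed.

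For (iii), if distinct edges $\ell,\ell'\in X_q$ satisfied $\pi(\ell)=\pi(\ell')$, then $u:=\ell-\ell'$ would be, literally by definition, a difference of two distinct edges with $\pi(u)=0$, directly contradicting Constraint \ref{co1}(iii). The main piece of bookkeeping across the whole lemma lies in (ii), where one must cleanly separate the vector resonance equation (which determines $w$ as a linear combination of the $v_j$) from the norm resonance equation (which becomes the quadratic relation after substituting $w=\sum_j m_j v_j$); everything else is just combinatorial translation.
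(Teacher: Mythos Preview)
Your proof is correct and follows essentially the same route as the paper's own argument: part (i) via Remark~\ref{piro}, part (ii) via the contrapositive of Proposition~\ref{completa}(i) together with the observation that $w=\sum_j m_j v_j$ with $\sum_j m_j=1$, $\sum_j|m_j|\le 2q+1$, and part (iii) by the trivial difference argument. Your write-up is in fact more explicit than the paper's in (ii), where the paper simply asserts that $w$ is a $\pm1$ combination of the $v_i$ and then invokes the quadratic condition; you spell out the case $\sum_j|m_j|=1$ separately, which is a worthwhile detail.
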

\begin{proof} i) The first statement follows  from Remark \ref{piro}. 

ii) Using   Proposition \ref{completa} it is enough to show that, under Constraint ii),  we cannot find  $2q+1$ elements $u_j=v_{i_j}$  for which there is a further vector $w$ in $\Z^m$ with $u_1,\ldots,u_{2q+1},w$ resonant.  Otherwise $w=\sum_in_iv_i$ is a linear combination with $\pm1$ coefficients of the $v_i$ hence it is a vector satisfying the hypotheses of item ii), but the quadratic condition in the same item implies that the list is non resonant.  iii) is clear.
\end{proof}  Constraint iv)  will be used in the next proposition, we shall see that it excludes quadratic terms of type $z_h^2$ or $\bar z_h^2$  in $H_{Res}$.

For $q=1$ this constraint means only that $ -2|v_i|^2- 2|v_j|^2+ |v_i+v_j|^2\neq 0, i\neq j$ and this just means $v_i\neq v_j$.
\begin{proposition}
\label{quadrat} Under all the previous constraints we have \begin{equation}\label{hama}N:= (\ome(\xi),y)+ \sum_{k\in S^c}  |k|^2 |z_k|^2 +   \mathcal Q(x,w) \end{equation}  where (cf. Formula \eqref{moma0}) the coefficient \begin{equation}
\label{laaa}\ome(\xi)= \ome_0+\nabla_\xi  A_{q+1}(\xi)- (q+1)^2 A_q(\xi)\underline 1,    
\end{equation}   does not depend on the dynamical variables.  
Here $\underline 1\in\N^m$ denotes the  vector with all coordinates equal to 1.

The term $\mathcal Q(x,w)$  is quadratic:
\begin{equation}\label{laquadra}
\mathcal Q= \sum_{\ell\in X_q^0} c(\ell)e^{ \ii(\ell,x)}\sum_{(h,k)\in \mathcal P_\ell }z_h\bar z_k
+ \sum_{\ell\in X_q^{-2}}c(\ell) \sum_{\{h,k\}\in \mathcal P_\ell }[ e^{ \ii(\ell,x)}z_h  z_k
 +  e^{ -\ii(\ell,x)} \bar z_h\bar z_k].
\end{equation} 
 where,
given  an edge  $\ell$, we set $\ell=\ell^+-\ell^- $ and define:
\begin{equation}\label{cl}
c_q(\ell)\equiv c(\ell):=\left\{\begin{array}{ll}\displaystyle (q+1)^2\xi^{\frac{\ell^++\ell^-}{2}}\sum_{\alpha\in\N^m\atop{|\alpha+\ell^+|_1=q}}\binom{q}{ \ell^++\alpha}\binom{q}{ \ell^-+\alpha }    \xi_i^\alpha  & \ell\in X_q^0\\\displaystyle
(q+1)q\xi^{\frac{\ell^++\ell^-}{2}}\sum_{\alpha\in\N^m\atop{|\alpha+\ell^+|_1=q-1}}\binom{q+1}{\ell^-+\alpha}\binom{q-1}{ \ell^++\alpha}  \xi_i^\alpha &\ell\in X_q^{-2}  
\\c_q( \ell)=c_q(-\ell)\qquad\quad  &\ell\in X_q^{ 2} 
 \end{array}\right.
\end{equation}  for the definition of $\mathcal P_\ell$ see Definition \ref{pl}.
\end{proposition}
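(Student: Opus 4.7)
The plan is to start from Definition \ref{norfo} and formula \eqref{Ham2} for $H_{\rm Res}$, apply the action--angle substitution \eqref{chofv}, and collect the terms of degree $\le 2$ in the dynamical variables $(y,w)$ (where $y$ has degree $2$ and $w$ has degree $1$). I would split the computation according to the number $\#S^c$ of normal--site factors appearing in each resonant monomial of $H^{(2q+2)}_{\rm res}$, treating the cases $\#S^c=0,1,2$ separately; all higher values produce terms of $w$--degree $>2$, which by definition belong to $P$. For the quadratic part $K$, the substitution yields $(\omega_0,\xi+y)+\sum_{k\in S^c}|k|^2|z_k|^2$ modulo the additive constant $(\omega_0,\xi)$. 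For the $\#S^c=0$ block, Constraint \ref{co1}i) guarantees that every resonant list is integrable, so Proposition \ref{compleH} applies and this block equals $A_{q+1}(\xi+y)$, whose Taylor expansion to degree $\le 2$ contributes an additive constant plus the linear piece $(\nabla_\xi A_{q+1}(\xi),y)$, producing the $\nabla_\xi A_{q+1}(\xi)$ summand of $\omega(\xi)$. The $\#S^c=1$ block is killed by Constraint \ref{co1}ii), which via Proposition \ref{completa} is the completeness statement $H_{\#S^c=1}=0$.

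The heart of the computation is the $\#S^c=2$ block. For a monomial of type $u_h\bar u_k$ with $h,k\in S^c$, the remaining $2q$ tangential factors come in multi--index pairs $(\alpha',\beta')$ with $|\alpha'|=|\beta'|=q$, and I set $\ell=\alpha'-\beta'\in X_q^0$. The momentum and energy conditions in \eqref{Ham2} translate into $k-h=\pi(\ell)$ together with $2(h,\pi(\ell))+|\pi(\ell)|^2=\sum_i\ell_i|v_i|^2$, which defines $\mathcal P_\ell$. Keeping only the $y^0$ piece of $\prod\sqrt{\xi_j+y_j}^{\pm}$, substituting $\alpha'=\ell^++\alpha$, $\beta'=\ell^-+\alpha$, and using the multinomial identity $\binom{q+1}{\alpha',1}=(q+1)\binom{q}{\alpha'}$ transforms the multinomial coefficients of \eqref{Ham2} into the explicit expression for $c(\ell)$ in the first line of \eqref{cl}. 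The cases $u_h u_k$ and $\bar u_h\bar u_k$ give the red--edge contributions with $\eta(\ell)=\mp 2$; an analogous rearrangement yields the second line of \eqref{cl}, while the third line simply records that $X_q^{+2}$ is the complex conjugate of $X_q^{-2}$, ensuring $\mathcal Q$ is real. Constraint \ref{co1}iii) enters twice: it ensures that each $\ell\in X_q$ is uniquely determined by its momentum $\pi(\ell)$, so distinct edges contribute independently to the sum \eqref{laquadra}, and it guarantees $\pi(\ell)\ne 0$, which separates $\ell=0$ from the edges. Constraint \ref{co1}iv) rules out the degenerate pair $h=k$ for red edges, which would otherwise generate $z_h^2$ or $\bar z_h^2$ terms not present in the stated form of $\mathcal Q$.

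The last ingredient, and the main bookkeeping subtlety, is the $\ell=0$ contribution in the $\#S^c=2$ block: the pairs $(h,h)$ produce the spurious summand $(q+1)^2 A_q(\xi)\sum_{h\in S^c}|z_h|^2$, which is not part of $\mathcal Q$ because $0\notin X_q$. Here I invoke Remark \ref{rinn} and renormalize by subtracting the multiple $(q+1)^2 A_q(\xi)\,L$ of the conserved mass; using $\sum_{h\in S^c}|z_h|^2=L-\sum_i\xi_i-\sum_i y_i$, this subtraction cancels the $|z_k|^2$--correction (restoring the bare normal frequency $|k|^2$), absorbs an inessential constant, and shifts the linear $y$--coefficient by $-(q+1)^2 A_q(\xi)\underline 1$. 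Combining this shift with $\omega_0$ and $\nabla_\xi A_{q+1}(\xi)$ produces formula \eqref{laaa}, and assembling the three blocks yields \eqref{hama}. The hardest step is the combinatorial identification of the $y^0$ coefficient of each monomial class with the explicit $c(\ell)$ of \eqref{cl}, and the clean execution of the $\ell=0$ reabsorption via $L$; once these are in place, the proposition is immediate from Constraint \ref{co1} and the definitions.
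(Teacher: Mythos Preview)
Your proof is correct and follows essentially the same route as the paper's: both split the resonant interaction into $\#S^c=0,1,2$, invoke Constraint~\ref{co1}i)--ii) for the first two cases via Propositions~\ref{compleH} and~\ref{completa}, compute the quadratic coefficients $c(\ell)$ by the same multinomial manipulation, use Constraint~\ref{co1}iii)--iv) to rule out coincidences, and absorb the $\ell=0$ term $(q+1)^2A_q(\xi)\sum_k|z_k|^2$ into the conserved mass $L$ to produce the $-(q+1)^2A_q(\xi)\underline 1$ shift in $\omega(\xi)$. The organization and key identifications match the paper's proof closely.
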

\begin{example}
  Let us discuss $q=1$, {\em the cubic NLS}.  We have
    \begin{equation}\label{gliome}\ome_i(\xi):  = |v_i|^2 -2\xi_i,
\end{equation}
finally the quadratic form is \begin{equation}
\label{quafo}\mathcal Q(w)= 4\sum^*_{  1\leq i\neq j\leq m    \atop h,k \in S^c}\sqrt{\xi_{i}\xi_{j}}e^{\ii(x_{i}-x_{j})}z_{h}\bar z_{k } + 
\end{equation} $$  2\sum^{**}_{ 1\leq i< j\leq m   \atop h,k \in S^c }\sqrt{\xi_{i}\xi_{j}}e^{-\ii(x_{i}+x_{j})}z_{h} z_{k } + 
  2\sum^{**}_{ 1 \leq i<j\leq m    \atop h,k \in S^c }\sqrt{\xi_{i}\xi_{j}}e^{\ii(x_{i}+x_{j})}\bar z_{h}\bar  z_{k }. $$  Notice that in the sums  $  \sum^{**}$ each term appears twice. \vskip10pt  
 
Here $\sum^*$ denotes that $(h,k,v_i,v_j)$ satisfy: 
 $$ \{(h,k,v_i,v_j)\,|\,  {h+v_i= k+v_j},\  { |h|^2+|v_i|^2=| k|^2+|v_j|^2}\}.$$
  and $\sum^{**}$, that  $(h,v_i, k,v_j)$ satisfy:
   $$ \{(h,v_i,k,v_j)\,|\,  {h+k= v_i+v_j},\  { |h|^2+|k|^2=| v_i|^2+|v_j|^2}\}.$$

\end{example}
\begin{proof}[Proof of Proposition \ref{quadrat}] By definition  the normal form     $N$     collects all the terms of $H_{\rm Res}$ (as series in $y,w$) of degree $\leq 2$ in the variables   $y,w$. In turn $H_{\rm Res}$ is the sum of the quadratic term $K= \sum_k  |k|^2 |u_k|^2$ and of the terms of degree $2q+2$ in the original variables $u,\bar u$.

From Remark \ref{rinn}  the quadratic term
$K$ contributes to $N$ the terms $$(\ome_0,y)+\sum_{k\in S^c}|k|^2|z_k|^2 .$$
  The remaining terms  
 $  u_{k_1}\bar u_{k_2}u_{k_3}\bar u_{k_4}\ldots u_{k_{2q+1}}\bar u_{k_{2q+2}}$ satisfy the constraints  \begin{equation}\label{basco}
\sum_i(-1)^i k_i=0,\quad  \sum_i(-1)^i |k_i|^2=0  .
\end{equation}   These terms may contribute to terms of $N$ only if they are of total degree $\leq 2$ in $y,w$.
 
 We  analyze the three  possible cases, of degree $0,1,2$ in $w$. 
 \begin{itemize}
 \item  {\em degree 0}\quad If all the $k_i$ are in $S$ the  momentum $ \sum_i(-1)^i k_i$ is a linear expression $\sum_jm_jv_j$.  From momentum conservation  and  Constraint \ref{co1} i) we must have $m_j=0,\ \forall j$.  This implies that we can pair the even and odd $k's$  and, as shown in Proposition  \ref{compleH}, this gives a contribution $A_{q+1}(\xi+y)$. In this expression the terms of degree $\leq 2$ give a constant (which we ignore) and   the   term $(\nabla_\xi  A_{q+1}(\xi),y)$.\smallskip
 
\item {\em degree 1}\quad One and only one of the $k_i=k\in S^c$. Formula \eqref{basco} becomes
$$ k-\sum_i n_i v_i=0\,,\quad |k|^2-\sum_i n_i |v_i|^2=0$$ where $\sum_i n_i v_i$ satisfies the hypotheses of 
Constraint \ref{co1} ii). Thus these terms do not occur and $S$ is complete. \smallskip

\item {\em degree 2}\quad  Given  $h,k\in S^c$ we compute the coefficients of $z_h \bar z_k$ or $ z_h z_k$  or $\bar z_h \bar z_k$. These terms are obtained when all  but two of the $k_i$  are in $S$.  Each $k_i$ in $S$  contributes $\sqrt{\xi_i+y_i}e^{\pm x_i}$, giving a coefficient $\sqrt{\prod_{j=1}^{m } \xi_{j}^{\ell_j }}e^{\ii (\ell,x)}$, whenever 
\begin{equation}\label{fico}{(z_h\bar z_k)}:\quad\quad\;
\sum_{j=1}^{m}\ell_j v_j+h-k=0,\quad  \sum_{j=1}^{m}\ell_j |v_j|^2+|h|^2-|k|^2=0\ \quad \phantom{-}\ell\in X_q^0\quad
\end{equation}\begin{equation}\label{fico1} {( z_h z_k)}:\quad\quad
\sum_{j=1}^{m}\ell_j v_j+h+k=0,\  \quad \sum_{j=1}^{m}\ell_j |v_j|^2+|h|^2+|k|^2=0\quad \phantom{-} \ell\in X_q^{-2}
\end{equation}\begin{equation}\label{fico2}{( \bar z_h \bar z_k)}:\quad\quad\;
\sum_{j=1}^{m}\ell_j v_j-h-k=0,\ \quad  \sum_{j=1}^{m}\ell_j |v_j|^2-|h|^2-|k|^2=0\quad \phantom{-} \ell\in X_q^{ 2}
\end{equation} \end{itemize}
\begin{definition}\label{pl}
Given $\ell \in X_q^{(0)}$  denote by $\mathcal P_\ell$  the set of pairs $h,k$ satisfying   Formula \eqref{fico}.   If $\ell\in X_q^{(-2)}$  we denote by $\mathcal P_\ell$  the set of unordered pairs $\{h,k\}$ satisfying Formula  \eqref{fico1}.  \end{definition}
 \smallskip

Constraint \ref{co1} iii), where $u$ is the sum or difference  of two edges, implies that $h,k$ fix $\ell$ uniquely.
 In Formulas  \eqref{fico1},  \eqref{fico2}  we see that we cannot have $\ell=\mp 2e_i$ since the equations in these Formulas  have the only solution $h=k=v_i\in S$. This explains why in Definition \ref{edges} we have excluded $\pm 2e_i$ as edges.
Constraint \ref{co1} iv) implies that $h\neq k$ in Formulas  \eqref{fico1},  \eqref{fico2}.
By Constraint \ref{co1} iii) where $u$ is an edge, in \eqref{fico} $k=h$ implies $\ell=0$. This contributes a term $(q+1)^2A_{q }(\xi)\sum_{k\in S^c}|z_k|^2$.
 It is convenient to  write $$\sum_k (q+1)^2A_{q }(\xi)|z_k|^2=(q+1)^2A_{q }(\xi)(\sum_k |z_k|^2+\sum_iy_i)-   (q+1)^2A_{q }(\xi)(\sum_iy_i),$$ and notice that   $ (q+1)^2A_{q }(\xi)(\sum_k |z_k|^2+\sum_iy_i)$ is a mass term (hence a constant of motion for the whole Hamiltonian) and can be dropped from the Hamiltonian, so we change $N$ into
\begin{equation}
\label{laff}N=K+(\nabla_\xi  A_{q+1}(\xi)-(q+1)^2A_{q }(\xi)\underline 1,y)+\mathcal Q(x,w),\quad K=( \ome_0,y)+\sum_k |k|^2 |z_k|^2.
\end{equation}
Recall that $\underline 1\in\N^m$ denotes the  vector with all coordinates equal to 1.

Let us now compute $\mathcal Q(x,w)$,
 given  an edge  $\ell$ set $\ell=\ell^+-\ell^- $  formula \eqref{cl} comes from the expansion
 $$ c_q(\ell):=\left\{\begin{array}{ll}\displaystyle (q+1)^2\sum_{e_{h_1}-e_{k_1}+e_{h_2}+\ldots + e_{h_q}-e_{k_q}=\ell}\prod_{i=1}^q (\xi_{h_i}\xi_{k_i})^{1/2} & \ell\in X_q^0\\\displaystyle
(q+1)q\sum_{e_{h_1}-e_{k_1}+e_{h_2}+\ldots + e_{h_{q-1}}-e_{k_{q-1}}-e_{h_q}-e_{k_q}=\ell}\prod_{i=1}^q (\xi_{h_i}\xi_{k_i})^{1/2} &\ell\in X_q^{-2} \\c_q(-\ell)=c_q(\ell)
 \end{array}\right.$$

\end{proof}

It is interesting to notice  a point essential for the KAM algorithm,   since it gives a locally invertible change  of coordinates $\omega_i=\omega_i(\xi)$  expressing$$(\nabla_\xi  A_{q+1}(\xi)-(q+1)^2A_{q }(\xi)\underline 1,y)=\sum_{i=1}^m\omega_iy_i. $$

  \begin{proposition}\label{nond}
 For every $r$, the Hessian  of $A_r(\xi)$ is a non degenerate matrix as polynomial in $\xi$.\end{proposition}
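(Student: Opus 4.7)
My plan is to prove non-degeneracy by exhibiting a single point of $\R^m$ at which $\mathrm{Hess}(A_r)$ is invertible. Since all entries of $\mathrm{Hess}(A_r)$ are polynomials in $\xi$, their determinant is a polynomial, and a single non-zero value of this determinant implies that it is not identically zero. I will evaluate at the coordinate vertex $\xi=e_1=(1,0,\ldots,0)$, where the expansion collapses dramatically.

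At $\xi=e_1$ the monomial $\xi^{k-e_i-e_j}$ (or $\xi^{k-2e_i}$ on the diagonal) vanishes unless its support lies entirely at position $1$. For each pair $(i,j)$ this forces a unique multi-index $k$ with $|k|=r$, so every entry of the Hessian at $e_1$ reduces to a single explicit multinomial coefficient. A short bookkeeping gives
$$H_{11}=r(r-1),\quad H_{1j}=r^2(r-1)\ (j>1),\quad H_{jj}=\tfrac{r^2(r-1)^2}{2}\ (j>1),\quad H_{ij}=r^2(r-1)^2\ (i\neq j,\ i,j>1),$$
coming respectively from $k=re_1$, $k=(r-1)e_1+e_j$, $k=(r-2)e_1+2e_j$, and $k=(r-2)e_1+e_i+e_j$.

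The resulting matrix has a transparent block structure: a single $(1,1)$ entry, a first row/column proportional to the all-ones vector $\underline 1$, and a lower-right $(m-1)\times(m-1)$ block of the simple form $-\tfrac{v}{2}I+vJ$ with $v:=r^2(r-1)^2$ and $J$ the all-ones matrix. Since $J$ has eigenvalues $m-1$ on $\underline 1$ and $0$ on $\underline 1^\perp$, this block is invertible and its inverse sends $\underline 1$ to $\tfrac{2}{v(2m-3)}\underline 1$. Applying the Schur complement formula to $H|_{e_1}$ produces, after routine simplification,
$$\det\mathrm{Hess}(A_r)\big|_{e_1} \;=\; (-1)^{m-1}\,\frac{\bigl(r(r-1)\bigr)^{2(m-1)}}{2^{m-1}}\; r\,(r+2m-3),$$
which is strictly non-zero for every $r\ge 2$ and every $m\ge 1$ (the formula also specialises correctly to $r(r-1)$ when $m=1$).

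The one real choice in the argument is the evaluation point. Trying the symmetric point $\xi=(1,\ldots,1)$ instead, the Hessian takes the circulant form $(a-b)I+bJ$ and the determinant reduces to a constant multiple of $a-b=\sum_k\binom{r}{k}^2 k_1(k_1-1-k_2)$; showing that this combinatorial sum is non-zero for every $r\ge 2$ appears to require delicate sign analysis since positive and negative contributions coexist. Evaluating at $e_1$ sidesteps the issue entirely, since each Hessian entry then reduces to a single multinomial and the determinant becomes an elementary function of $r$ and $m$ with no hidden cancellations; this is why I expect $e_1$ (rather than the symmetric point) to be the right vertex of $\R^m$ to specialise at.
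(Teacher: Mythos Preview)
Your proof is correct. The entry computations at $\xi=e_1$ are right, and the Schur-complement determinant formula
\[
\det\mathrm{Hess}(A_r)\big|_{e_1}=(-1)^{m-1}\frac{\bigl(r(r-1)\bigr)^{2(m-1)}}{2^{m-1}}\,r(r+2m-3)
\]
is correctly derived and manifestly non-zero for $r\ge 2$ and any $m\ge 1$. (For $r=1$ the function $A_1$ is linear and its Hessian is identically zero; the statement is only meant for $r\ge 2$, which is the range used in the subsequent Corollary.)

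The route is genuinely different from the paper's. The paper never evaluates at a point; instead it argues arithmetically. Writing $r=p^{s}t$ with $p$ prime, it observes that every Hessian entry is divisible by $r(r-1)$, and that after dividing by $r(r-1)$ the off-diagonal entries are divisible by $p$ (via the elementary fact that if $p\nmid\binom{r}{\ell}$ then $p^{s}\mid\ell$). Hence modulo $p$ the quotient matrix is diagonal with entries containing the monomial $\xi_i^{\,r-2}$ with coefficient $1$, so its determinant is a non-zero polynomial modulo $p$, hence non-zero. Your approach is more elementary --- no divisibility lemma is needed, and one obtains an explicit closed formula for the determinant at a point --- whereas the paper's modular argument is more portable: the very same reduction-mod-$p$ trick is recycled in Corollary~\ref{diffeo} to handle the Jacobian of $\xi\mapsto\nabla_\xi A_{q+1}(\xi)-(q+1)^2A_q(\xi)\underline 1$, where a single-point evaluation would be messier. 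So each method buys something: yours gives a cleaner, self-contained proof of the proposition itself; theirs sets up machinery reused immediately afterwards.
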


\begin{proof}
Let $r=p^st$ with $p$ prime and $p\not | t$. It is well known and elementary that,   if $p$ does not divide   $\binom{r}{\ell}$, then  $p^s$ divides  the vector $\ell$.   
The coefficients of  $\partial_{\xi_1}\partial_{\xi_2} A_r(\xi)$  are 
$$\ell_1 \ell_2 \binom{r}{\ell }^2=r(r-1) \binom{r-2}{\ell_1-1,\ell_2-1,\dots,\ell_m}\binom{r}{\ell }. $$ We claim that they are divisible by $p^s r(r-1)$. Indeed  if $p$ does not divide  $\binom{r}{\ell}$   we have seen that $p^{2s}$ divides $\ell_1\ell_2$ while $p^{s+1}$ does not divide $r(r-1)$.
The coefficients of  $\partial^2_{\xi_1} A_r(\xi)$  are  $$\ell_1(\ell_1-1)  \binom{r}{\ell }=r(r-1) \binom{r-2}{\ell_1-2,\ell_2 ,\dots,\ell_m}.$$
 
 It follows that the Hessian is divisible by $r(r-1)$,  the off diagonal terms are divisible by   $p^{s }r(r-1)$  while the diagonal contains the term $r(r-1)$diag$(\xi_i^{r-2})$. Therefore,   once we divide  by  $r(r-1)$  we have a matrix which, modulo $p$, is diagonal with non zero entries. 
 \end{proof}

 From Proposition \ref{nond} we have the coordinate change:
 \begin{corollary}\label{diffeo}
The map $\xi\to \nabla_\xi  A_{q+1}(\xi)-(q+1)^2A_{q }(\xi)\underline 1$ is a local diffeomorphism outside a real algebraic hypersurface.
\end{corollary}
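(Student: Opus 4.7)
The plan is to reduce the corollary to the non-vanishing of a single polynomial. Writing $F(\xi) := \nabla_\xi A_{q+1}(\xi) - (q+1)^2 A_q(\xi)\underline 1$, the inverse function theorem tells us that $F$ is a local diffeomorphism at every $\xi$ where $\det(DF)(\xi) \neq 0$. Since the entries of $DF$ are polynomials in $\xi$, the real zero locus of $\det(DF)$ is a real algebraic hypersurface provided $\det(DF)\not\equiv 0$, so everything reduces to showing that the polynomial $\det(DF)$ is not identically zero.

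A direct differentiation gives $DF(\xi) = H(\xi) - (q+1)^2\,\underline 1\, g(\xi)^T$ with $H := \mathrm{Hess}(A_{q+1})$ and $g := \nabla A_q$, so $DF$ is a rank-one perturbation of the Hessian. Proposition \ref{nond} already guarantees $\det(H)\not\equiv 0$, but the matrix determinant lemma only yields
\[
\det(DF) \;=\; \det(H)\bigl(1 - (q+1)^2\, g^T H^{-1}\underline 1\bigr),
\]
and one still has to rule out that the second factor cancels $\det(H)$ identically as a polynomial identity.

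To exclude this I would evaluate at the coordinate-axis point $\xi_0 := (1,0,\ldots,0)$. Using the multinomial formula \eqref{symme}, at $\xi_0$ only the multi-indices $k$ concentrated on the first coordinate survive, and a short computation yields $H_{11}(\xi_0) = q(q+1)$, $H_{1j}(\xi_0) = q(q+1)^2$ for $j>1$, $H_{jj}(\xi_0) = q^2(q+1)^2/2$ for $j>1$, $H_{j\ell}(\xi_0) = q^2(q+1)^2$ for distinct $j,\ell>1$, together with $g(\xi_0) = (q, q^2, \ldots, q^2)$. Subtracting the rank-one matrix $(q+1)^2\,\underline 1\, g(\xi_0)^T$ cancels \emph{exactly} the entries of rows $j>1$ off the diagonal, so $DF(\xi_0)$ becomes upper triangular, with $(1,1)$-entry $-q^2(q+1)$ and remaining diagonal entries all equal to $-q^2(q+1)^2/2$. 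Hence
\[
\det(DF)(\xi_0) \;=\; -q^2(q+1)\cdot\bigl(-q^2(q+1)^2/2\bigr)^{m-1} \;\neq\; 0 \quad \text{for every } q\geq 1,
\]
which certifies $\det(DF)\not\equiv 0$ and completes the proof.

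The main obstacle is the choice of specialization: at a generic $\xi$ one would have to estimate the rank-one correction $g^T\mathrm{adj}(H)\underline 1$ against $\det(H)$ by hand, which is awkward. The boundary point $\xi_0$ is well suited precisely because most multinomial coefficients in $A_r(\xi_0)$ vanish and the rank-one perturbation conspires to zero out the entire strictly-lower-triangular block, reducing the determinant to a readable product of diagonal entries. An alternative route would be to adapt the prime-divisibility trick of Proposition \ref{nond} directly to $DF$, but the direct evaluation seems cleaner.
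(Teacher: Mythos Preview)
Your proof is correct and takes a genuinely different route from the paper. The paper does exactly the ``alternative route'' you mention at the end: it writes the Jacobian as $J=q(q+1)A-(q+1)^2B$ with $q(q+1)A=\mathrm{Hess}(A_{q+1})$ and $B=qC$ the matrix of partials of $A_q$, divides by $q(q+1)$ to get an integer matrix $A-(q+1)C$, and then reduces modulo a prime $p$ dividing $q+1$; the $C$-term disappears and what is left is the diagonal matrix $\mathrm{diag}(\xi_i^{q-1})$ already isolated in the proof of Proposition~\ref{nond}, so the determinant is not identically zero.

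Your direct evaluation at $\xi_0=(1,0,\ldots,0)$ is arguably more elementary, since it avoids the $p$-divisibility analysis of multinomial coefficients and produces an explicit nonzero value of $\det(DF)$ by a clean upper-triangular reduction. The paper's argument, on the other hand, is shorter \emph{in context} because it recycles verbatim the machinery just set up for Proposition~\ref{nond}. Both approaches exploit the same structural fact---that the rank-one correction $(q+1)^2\underline{1}\,g^T$ is in a precise sense subordinate to the Hessian---but detect it differently: you by specialization, the paper by reduction mod $p$.
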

 \begin{proof}
The Jacobian of this map is a matrix with entries polynomials in $\xi$ with integer coefficients. Reasoning as in Proposition \ref{nond} we see that this Jacobian matrix is  of the form $J=q(q+1)A-(q+1)^2B$ where $q(q+1)A$ is the Hessian of $A_{q+1}(\xi)$ while $B$  has as entries    the derivatives of $A_{q }(\xi)$, therefore $B=qC$ has   coefficients divisible by $q$. Thus when we divide $J$ by $q(q+1)$  we have a matrix $A-(q+1)C$ with entries polynomials with integer coefficients. Modulo a prime $p$ dividing $q+1$ we have only the contribution from $A$ which gives the diagonal matrix with non zero entries discussed in the proof of Proposition \ref{nond}. It follows that the determinant of the Jacobian $J$  is a non--zero polynomial.
\end{proof}
 \subsubsection{The perturbation $P$\label{lestpe}}
  
Remark that  $P(x,y,w)$ is {\em regular} in the sense of \S\ref{cts}. Indeed in    \eqref{Ham} all the terms of degree $>2$ are regular and the Birkhoff normal form and elliptic-action angle variables preserve this property by the chain rule. 

We say that $P$  is {\em of order} $\e^a r^b$ for some integers $a,b$ if its norm is smaller than $C \e^a r^b$ for some $\e,r$ independent constant $C$. This implies, since $P$ is regular, that  $X_P$ is of order $\e^a r^{b-2}$ (i.e. $\|X_P\|^\lambda_{s,r}$ is bounded by   $ C' \e^a r^{b-2}$).

According to Definition \ref{norfo}, $P$ comes from two types of terms. In a term-- denote it by $P^{(3)}$-- we collect all the terms of degree $2i+j>2$ coming from the resonant  terms  $\prod_{i=1}^{q+1}u_{k_i}\bar u_{h_i}$ (of $H_{\rm Res}$). 
 In $P^{(2q+4)}$ we collect all the terms coming from products $\prod_{i=1}^{d}u_{k_i}\bar u_{h_i}$, with $d\geq q+2$ of $P^{2(q+2)}$. 
 
 Recall that $u_{v_i}= \sqrt{\xi_i+y_i}e^{\ii x_i}=\sqrt{\xi_i}(1+\frac{y_i}{2\xi_i}+\dots)e^{\ii x_i}$  is of order $\e$, while $z_k$ is of order $r$.  Then the dominant term in $P^{(3)}$ is given by the dominant terms of the monomials of degree $2i+j=3$.  Hence all the other $2q-1$ variables are tangential and computed at $y=0$.  The order is hence $r^3 \e^{2q-1}$ 
  
   The order of $P^{(2q+4)}$  is clearly $\e^{2q+4}$ and comes from a term depending only on $\xi$ and possibly on $x$. However, by hypothesis  all its Fourier coefficients must respect momentum conservation. Reasoning as in Proposition \ref{quadrat}, by Constraint \ref{co1} iii) such term is necessarily constant in the dynamical variables, hence we drop it, since it does not contribute to the vector field.
   Hence the order is either $\e^{2q+6}$  or $\e^{2q+3}r$.
    For $r>\e^3$, the leading term is $\e^{2q+3}r$. Passing to the vector field, under our constraints:
   \begin{proposition}\label{gliord} The order of $X_{P^{(3)}}$ is   $r \e^{2q-1}.$
   
     The order of $X_{P^{(2q+4)}}$  is $\e^{2q+3}r^{-1}$.
     \end{proposition}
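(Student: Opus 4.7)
The plan is to decompose $P$ according to its two sources described in \S\ref{lestpe}, Taylor-expand each monomial in the dynamical variables via \eqref{chofv}, and read off the leading contribution to the weighted vector-field norm \eqref{weno}. The key observation is that a monomial of weighted degree $d'=j+2a$ in $(y,w)$ (with $j$ normal factors in $S^c$ producing $z,\bar z$ and total $y$-degree $a$) coming from a product of $2d$ factors of $u,\bar u$ is bounded on $D(s,r)$ by $C\,\e^{\,2d-j-2a}r^{\,j+2a}$ (one factor $\sqrt{\xi_i}\sim\e$ per tangential factor, a correction $y_i/\xi_i\sim r^2/\e^2$ for each $y$-extraction, and $r$ per normal factor, together with polynomial factors from $e^{|\ell|s}$ and the combinatorial coefficients). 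By the definition \eqref{weno} of the weighted vector-field norm this gives $\|X\|^\lambda_{s,r}\lesssim \e^{\,2d-j-2a}r^{\,j+2a-2}$, and since $r<\e/2$ increasing the weighted degree $d'$ by one loses a factor $r/\e<1/2$, so the geometric sums converge.

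For $P^{(3)}$, one has $d=q+1$ and the definition forces $d'\geq 3$. The minimum is attained at $d'=3$ (either $j=3$ or $j=1,\,a=1$), yielding $\|X_{P^{(3)}}\|^\lambda_{s,r}\lesssim\e^{2q-1}r$; contributions of higher weighted degree are smaller by successive factors $r/\e$ and sum to the same order.

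For $P^{(2q+4)}$ the terms come from $P^{2(q+2)}(u)=\sum_{d\geq q+2}(\text{degree }2d\text{ terms})$, analyticity of $G$ controlling the sum over $d$. The apparently dominant piece is $d'=0$, giving an order $\e^{2d}r^{-2}$ that would ruin the estimate; here one must argue that such a term vanishes. Indeed, a $d'=0$ term has the form $\bigl(\prod_i\xi_i^{\alpha_i}\bigr)e^{i(\ell,x)}$ with $\ell\in\Z^m$ of $1$-norm at most $2d$ and, by momentum conservation, $\sum_j\ell_j v_j=0$. The generic linear inequalities of \S\ref{gen}(i) (which subsume and extend Constraint~\ref{co1}(i) to arbitrary degrees bounded by the explicit constant $C=C(q,n)$, applied separately to each Taylor coefficient of $G$) force $\ell=0$, so the term is in fact constant in all the dynamical variables and does not appear in the Hamiltonian vector field; one simply discards it, as in Remark~\ref{rinn}. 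The next weighted degree is $d'=1$, of order $\e^{2d-1}r^{-1}$, minimized over $d\geq q+2$ at $d=q+2$ to give $\e^{2q+3}r^{-1}$; higher $d'$ and higher $d$ are smaller by geometric factors, whose sums converge to the same leading order, giving $\|X_{P^{(2q+4)}}\|^\lambda_{s,r}\lesssim\e^{2q+3}r^{-1}$.

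The only non-routine step is the vanishing of the $d'=0$ contributions in $P^{(2q+4)}$: one must verify that the generic conditions of \S\ref{gen} are strong enough to rule out every integer relation $\sum\ell_jv_j=0$ with $\eta(\ell)=0$ and $|\ell|_1\leq 2d$ for all $d\geq q+2$ simultaneously, which is exactly the degree-free version of Constraint~\ref{co1}(i) built into the generic framework. The Lipschitz-in-$\xi$ half of the norm \eqref{weno} is handled uniformly, since $\lambda=\e^2$ and $\xi_i\sim\e^2$ imply that each $\xi$-derivative costs a bounded factor $\lambda/\xi_i=O(1)$, and thus does not alter any of the orders computed above.
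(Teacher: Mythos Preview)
Your plan is essentially the paper's argument: split $P=P^{(3)}+P^{(2q+4)}$, Taylor-expand in the dynamical variables, and read off the leading weighted degree. The $P^{(3)}$ estimate and the Lipschitz-in-$\xi$ remark are fine.

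The gap is in the treatment of the $d'=0$ contributions to $P^{(2q+4)}$. You assert that the generic linear inequalities of \S\ref{gen}(i) force $\sum_j\ell_jv_j\neq 0$ for every nonzero $\ell\in\Z^m$ with $\eta(\ell)=0$ and $|\ell|_1\leq 2d$, for \emph{all} $d\geq q+2$ simultaneously. This is impossible once $m\geq n+2$: the linear map $(\pi,\eta)\colon\Z^m\to\Z^n\times\Z$ then has nontrivial kernel, so some nonzero $\ell$ with $\pi(\ell)=0$ and $\eta(\ell)=0$ always exists, and no finite list of polynomial inequalities (the constraints in \S\ref{gen} involve coefficients bounded by a fixed $C$ and only $4q(n+1)$ vector variables) can rule it out. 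There is no ``degree-free'' version of Constraint~\ref{co1}(i) in the generic framework.

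The paper handles this as follows, and so should you. Only at the bottom degree $d=q+2$ does one invoke a constraint (the paper cites Constraint~\ref{co1}) to conclude that the $d'=0$ piece, of order $\e^{2q+4}$, is constant in all dynamical variables and hence drops out of the vector field. For $d\geq q+3$ one makes no such claim: the $d'=0$ contribution is simply kept and bounded by $\e^{2d}r^{-2}\leq \e^{2q+6}r^{-2}$, which under the standing hypothesis $\e^3<r$ of Theorem~\ref{teo1}(iv) is $\leq \e^{2q+3}r^{-1}$ and thus subdominant. With this one-line adjustment your argument is complete.
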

 \begin{remark}
Is is possible to improve the estimate $r>\e^3$  to  $r> \e^{2q+1}$ by noticing that with one step of  Birkhoff  normal form one can  remove all the non--resonant terms in $H$ of degree $< 4q+2$, then we repeat the analysis as above. This procedure only changes $\ome$ and $\mathcal Q$ in a trivial way. 
\end{remark}

 \section{Matrix description of $ad(N)$}  \label{cinque}

 \subsection{The spaces  $V^{i,j}$ and $F^{0,1}$}  \begin{definition}
We denote by $V^{i,j}$ the space of functions spanned by elements of total degree $i$ in $y$ and $j$ in $w$  and $V^h=\sum_{i+j=h} V^{i,j}, V^\infty =\sum_{i,j } V^{i,j}.$
\end{definition}
  The  space     $V^{0,1}$ has a basis over $\C$  given by the elements $\{ e^{\ii \sum_j\nu_jx_j}z_k ,\quad   e^{-\ii \sum_j\nu_jx_j}\bar z_k\},$ where ${\nu\in \Z^m\,,k \in S^c}$.

  It can be also viewed as a {\em free module}\footnote{ A free module is the vector space of linear combinations of a basis with coefficients in an algebra.} with basis the elements $z_k,\bar z_k$,  over the algebra $\mathcal F$ of finite Fourier series. This  is useful since, by Formula \eqref{laquadra}, the function $\mathcal Q$ commutes with  $\mathcal F$  and thus it can be described by a matrix, with entries in  $\mathcal F$, in the basis $z_k,\bar z_k$. 
  \smallskip
  
  We now impose the restrictions of momentum and mass conservation.  
   Denote by $F^{0,1}$  the subspace of   $V^{0,1}$ commuting with momentum.  By formula \eqref{moma} we see that $F^{0,1}$  has as basis,  which we call {\em  frequency basis}, the set $F_B$ of elements (cf. \eqref{MMM})
 \begin{equation}\label{llab}
 F_B=\{ e^{\ii \sum_j\nu_jx_j}z_k ,\quad   e^{-\ii \sum_j\nu_jx_j}\bar z_k\};\quad  \sum_j\nu_jv_j+k=\pi(\nu)+k=0\,,\quad k  \in S^c.\end{equation}  
An element of $F_B$  is completely determined by the value of $\nu$  and the fact that the $z$ variable may or may not be conjugated, thus sometimes we refer to $e^{\ii \sum_j\nu_jx_j}z_{-\pi(\nu)}  $ as $(\nu,+)$ and to $e^{-\ii \sum_j\nu_jx_j}\bar z_{-\pi(\nu)}$ as $(\nu,-)$.  By construction $\nu\in \Z^m_c$ where
 \begin{equation}\label{zmc}
\Z^m_c:=\{\mu\in\Z^m\,|\, -\pi(\mu)\in S^c\}\,,
\end{equation}
and we may identify $F_B$ with $\Z^m_c\times\Z/(2)$.

We can further decompose the space  $F^{0,1}=\oplus F^{0,1}_\ell$ by the eigenspaces of the mass operator $ad(L)$. Notice that   the {\em mass} of $e^{\ii \sum_j\nu_jx_j}z_k$ is $\ell=\sum_i\nu_i+1$, thus on the subspace commuting with $L$ we have  $-1=\sum_i\nu_i$  for $(\nu,\pm)$. 

 \subsubsection{The action of $ad(N)$\label{theaction}}  In order to study the  action of $ad(N)$ on the two spaces $F^{0,1}$ and
     $V^{0,1}$ we
 notice that:
\begin{remark}
\begin{enumerate}
\item The terms $ \sum_k|k|^2   |z_k|^2 +    \mathcal Q(x,w)$ Poisson commute with the algebra $\mathcal F$ of Fourier series in $x$.  
\item $ad(\sum_k|k|^2  |z_k|^2) $ is a diagonal matrix in the geometric basis  $z_k,\bar z_k$.

\item $ad((\ome(\xi),y)+ \sum_k |k|^2  |z_k|^2)$  is a diagonal matrix in the frequency basis $F_B$.\end{enumerate}
\end{remark}
 Hence, in order to describe $ad(N)$, we only need to understand the action of $ad(\mathcal Q)$   on the two spaces $F^{0,1}$ and
     $V^{0,1}$.
We then have two matrix   descriptions. One, denoted $\ii Q(x)$,  with respect to the basis $w$ and with finite Fourier series as entries  the other $\ii  Q$ with respect to the frequency basis  and with constant coefficients. Of course  each  can be deduced from the other in a simple way. 
\section{Graph representation}\label{Gra}
  A matrix $(a_{i,j})$  over a basis indexed by a set $I$ is conveniently  displayed graphically by a graph with vertices the elements of $I$.  Two  vertices $i,j$ are joined by an  edge if $a_{i,j}\neq 0$, in this case it is also convenient to orient the edge and mark it with the entry of the matrix as
$$\xymatrix{i  \ar@{<-}[r]^{a_{i,j}} &j}.$$  The usefulness of this construction lies in the fact that the connected components of the graph  correspond to the diagonal indecomposable blocks into which the matrix can be decomposed.

We thus associate to   the matrices $Q(x),Q$    two graphs    $
\Gamati,{\Lambda_S}$   encoding the information of the non--zero off diagonal entries in the respective bases.  \begin{definition}\label{iduegrafi}   The  graph $\Gamati$ has as vertices the geometric basis, i.e.  the variables $z_k,\bar z_h$, and edges corresponding to the nonzero entries of the matrix $Q(x)$  in this basis. 

The  graph ${\Lambda_S}$ has as vertices the elements of $F_B=\Z^m_c\times\Z/(2)$, and edges corresponding to the nonzero entries of the matrix $Q$  in this frequency basis.
\end{definition} 
\begin{remark}\label{ilgrang}
We could also introduce the graph describing the matrix $Q$  on the entire space  $V^{0,1}$  in its corresponding basis.  It is just obtained as infinitely many copies of   $\Lambda_S$ (for all values of momentum) by multiplying with all possible exponentials $e^{\ii\sum_j\nu_jx_j}$.  
\end{remark}

\subsubsection{The rules\label{Trules}}The rules to construct the graph are the Formulas \eqref{fico}, \eqref{fico1}, \eqref{fico2}. 

To be explicit in our case, if $a_{i,j}\neq 0$ also $a_{j,i}\neq 0$ so we should connect each connected pair of vertices  with two edges.  In fact it is clear that both edges and their markings are uniquely determined by a single edge $\ell$. We  discuss the simple choices that we make in order to be explicit.

In case of an ordered pair $(h,k)$ satisfying \eqref{fico} for the edge $\ell\in X_q^0$,  we display:
$$ \xy (0,0)*+{z_h}="a", (20,0)*+{z_k}="b" \ar @{<-}^{ c(\ell) e^{\ii \ell\cdot x}} "a";"b"< 2pt>
\ar @{->}_{ c(\ell) e^{-\ii \ell\cdot x}}"a";"b" < -2pt> \endxy \;\equiv\;  \xy (0,0)*+{z_h}="a", (20,0)*+{z_k}="b" \ar @{<-}^{ \ell} "a";"b"< 2pt>
\ar @{->}_{ -\ell}"a";"b" < -2pt> \endxy\,,\quad  \xy (0,0)*+{\bar z_h}="a", (20,0)*+{\bar z_k}="b" \ar @{<-}^{ - c(\ell) e^{-\ii \ell\cdot x}} "a";"b"< 2pt>
\ar @{->}_{- c(\ell) e^{\ii \ell\cdot x}}"a";"b" < -2pt> \endxy \;\equiv\; \xy (0,0)*+{\bar z_h}="a", (20,0)*+{\bar z_k}="b" \ar @{<-}^{ -\ell} "a";"b"< 2pt>
\ar @{->}_{\ell}"a";"b" < -2pt> \endxy\,. $$ 
Of course, if $\ell\in X_q^0$ then also $-\ell \in X_q^0$. We choose one representative of the pair $\ell,-\ell$ (for instance using lexicographic ordering) and drop one of the arrows.

Similarly for $(a,\sigma)\,,(b,\rho)\in \Z^m\times\Z/(2)$ such that $b= a+\ell$, 
$h=-\pi(b)\,,k=-\pi(a)\,,$ $\sigma=\rho$ (and $h,k$ as above) we have
 $$ \xy (0,0)*+{(b,+)}="a", (20,0)*+{(a,+)}="b" \ar @{<-}^{ c(\ell) } "a";"b"< 2pt>
\ar @{->}_{ c(\ell) }"a";"b" < -2pt> \endxy \equiv \xymatrix{ (b,+) \ar@{<-}[r]^{\ell}& (a,+)  },\quad  \xy (0,0)*+{(b,-)}="a", (20,0)*+{(a,-)}="b" \ar @{<-}^{ - c(\ell) } "a";"b"< 2pt>
\ar @{->}_{- c(\ell) }"a";"b" < -2pt> \endxy \equiv \xymatrix{(b,-) \ar@{<-}[r]^{\ell}& (a,-) }. $$
Notice that our convention in describing the basis $F_B$, implies that the arrow joining $(a,-)$ to $(b,-)$ has the opposite direction to that joining $\bar z_{h}$ to $\bar z_k$.

In case of an unordered pair $(h,k)$ satisfying \eqref{fico1} for the edge $\ell\in X_q^{-2}$  we display:
$$ \xy (0,0)*+{z_h}="a", (20,0)*+{\bar z_k}="b" \ar @{<-}^{ -c(\ell) e^{\ii \ell\cdot x}} "a";"b"< 2pt>
\ar @{->}_{ c(\ell) e^{-\ii \ell\cdot x}}"a";"b" < -2pt> \endxy \;\equiv\; \xymatrix{ z_h \ar@{-}[r]^{\ell}& \bar z_k  }\,,\quad  \xy (0,0)*+{(a,+)}="a", (20,0)*+{(b,-)}="b" \ar @{<-}^{ -c(\ell) } "a";"b"< 2pt>
\ar @{->}_{ c(\ell) }"a";"b" < -2pt> \endxy \;\equiv\; \xymatrix{ (a,+) \ar@{-}[r]^{\ell}& (b,-)  }, $$
where $-\pi(a)=h$, $-\pi(b)=k$ and $a+b=\ell$.
 \begin{remark}\label{Kcos}
Since $K$ commutes with $\mathcal Q$, a block for  $\mathcal Q$ is  contained in an eigenspace of $K$ with fixed eigenvalue $\kappa$. We have 
\begin{equation} 
\label{eiK}\{K, e ^{\ii \mu.x}z_k\}\!=\! \ii (\sum_i\mu_i|v_i|^2\!+|k|^2)e ^{\ii \mu.x}z_k,    \{K, e ^{-\ii \mu.x}\bar z_k\}\!= -\ii (\sum_i\mu_i|v_i|^2\!+|k|^2) e ^{-\ii \mu.x}\bar  z_k .\end{equation}
 The eigenspace of $K$ where $\sum_i\mu_i|v_i|^2\!+|k|^2=\kappa$ in general is an infinite block which has to be further reduced, by the explicit description of $\mathcal Q$, into the direct sum of infinitely many finite blocks.
\end{remark}
While the graph $\Gamati$  appears naturally in the description of $Q(x)$, we find it convenient to forget the conjugate variables getting a purely {\em geometric graph}  $\Gama$  with vertices in $S^c$ and {\em colored edges}.

\begin{definition}\label{geogra}
Two points $h,k\in S^c$ are connected by a {\em black edge} if $z_h,z_k$ are connected in  $\Gamati$, the edge has the same orientation  as the one joining $z_h,z_k$ and mark the edge by $-\pi(\ell)$.    Similarly, $h,k\in S^c$ are connected by a {\em red edge} if $z_h,\bar z_k$ are connected in  $\Gamati$,  the  marking is again $-\pi(\ell)$.
\end{definition} 
 
\begin{example}[$q=1$]\label{EGG}
Suppose we have in $\Gamati$ the  connected component containing $z_{k_1}$: 
\begin{equation}
\label{tildegamma}\tilde A_{k_1,+}=\xymatrix{ & \ar@{-}[d]^{-e_2-e_1}\bar z_{k_4} &&\\ z_{k_1}\ar@{->}[r]^{e_3-e_1\quad}& z_{k_2}  \ar@{->}[r]^{e_3-e_2\quad}&  z_{k_3}}\, {\rm where} \quad \left\{\begin{array}{l} k_2- k_1+v_3-v_1 =0   \\  |k_2|^2-| k_1|^2+|v_3|^2-|v_1|^2 =0 \\ k_3- k_2+v_3-v_2 =0   \\  |k_3|^2-| k_2|^2+|v_3|^2-|v_2|^2 =0\\  k_4+ k_2-v_2-v_1 =0   \\  |k_4|^2+| k_2|^2-|v_2|^2-|v_1|^2 =0\end{array}\right.\end{equation} 
with $k_1\neq k_2\neq k_3\neq k_4\in S^c$. Then the block of the matrix   $Q(x)$  corresponding to this  graph is
   $$4\begin{pmatrix}
 0 & \sqrt{\xi_1 \xi_3}e^{-\ii(x_3-x_1)} & 0 &0 \\ \sqrt{\xi_1 \xi_3}e^{\ii(x_3-x_1)} &0& \sqrt{\xi_2\xi_3} e^{-\ii(x_3-x_2)} & -\sqrt{\xi_1\xi_2}e^{-\ii(x_1+x_2)} \\ 0 &\sqrt{\xi_2\xi_3} e^{\ii(x_3-x_2)}  &0 & 0 \\ 0 & \sqrt{\xi_1\xi_2}e^{\ii(x_1+x_2)}&0 &0 
\end{pmatrix} $$ 
where we have arbitrarily chosen the ordering $z_{k_1},z_{k_2}, z_{k_3}, \bar z_{k_4}$.

By the reality condition we also have the connected component:$$ \tilde A_{k_1,-}=\xymatrix{ & \ar@{-}[d]^{-e_2-e_1} z_{k_4} &&\\ \bar z_{k_1}\ar@{<-}[r]^{e_3-e_1\quad}& \bar z_{k_2}  \ar@{<-}[r]^{e_3-e_2\quad}& \bar z_{k_3}}, $$ which we think of as a {\em conjugated block}.

In conclusion the contribution of these two components to  $\mathcal Q$ is:
\begin{equation}\label{hamex}
4\sqrt{\xi_1\xi_3} e^{\ii(x_1-x_3)}z_{k_1}\bar z_{k_2}+  4\sqrt{\xi_2\xi_3} e^{\ii(x_2-x_3)} z_{k_2} \bar z_{k_3}+ 4\sqrt{\xi_1\xi_2} e^{-\ii(x_1+x_2)} z_{k_2}   z_{k_4}+ $$ $$ 4\sqrt{\xi_1\xi_3} e^{-\ii(x_1-x_3)}\bar z_{k_1} z_{k_2}+  4\sqrt{\xi_2\xi_3} e^{-\ii(x_2-x_3)} \bar z_{k_2} z_{k_3}+ 4\sqrt{\xi_1\xi_2} e^{ \ii(x_1+x_2)} \bar z_{k_2} \bar  z_{k_4} 
\end{equation} 
Let now $a\in \Z^m$ be any vector such that $ -\pi(a)=k_1$, then  the graph $\Lambda_S$ has the two components

\begin{equation}\label{EGA}{\mathcal A}_{(a,+)}= \xymatrix{ & \ar@{-}[d]^{-e_2-e_1}(-e_2-e_3-a,-)&&\\ (a,+)\ar@{->}[r]^{e_3-e_1\quad}& (a+ e_3- e_1,+)  \ar@{->}[r]^{e_3-e_2}&  (a  -e_1-e_2+2e_3,+)  }, \end{equation} $${{\mathcal A}_{(a,-)}}= \xymatrix{ & \ar@{-}[d]^{-e_2-e_1}(-e_2-e_3-a,+)&&\\ (a,-)\ar@{->}[r]^{e_3-e_1\quad}& (a+ e_3- e_1,-)  \ar@{->}[r]^{e_3-e_2\quad}&  (a  -e_1-e_2+2e_3,-)  }. 
$$

Finally the  geometric graph  corresponding to \eqref{tildegamma} is \footnote{we shall denote by $ \xymatrix{ & \ar@{=}[r]&}$ a red edge.}  \begin{equation}
\label{EGA1}{A_{k_1}}= \xymatrix{ & \ar@{=}[d]^{v_2+v_1}k_4&&\\ k_1\ar@{->}[r]^{ v_1-v_3\quad}& k_2  \ar@{->}[r]^{v_2-v_3\quad}&  k_3}, 
\end{equation} 

The vectors appearing as vertices must satisfy the  linear and quadratic constraints  appearing in \eqref{tildegamma}. Notice that we can deduce the list of equations associated to a geometric graph by looking at its vertices, indeed if $k_i,k_j$ are connected by an edge then this arises from an $\ell$ (see Formulas \eqref{fico}--\eqref{fico2}) which is uniquely determined.\medskip
  \end{example}
\begin{remark}\label{pato}
All the connected components which we have described in this simple example  are isomorphic (as marked graphs), this is a fundamental issue since it enables us to define the change of variables  which reduces the Hamiltonian to constant coefficients.  The geometric graph  probably gives the clearer picture since it encodes in the simplest way the list of equations which the $k_i$ must fulfill. 

 It is useful to notice that, as soon as $m>n$, corresponding to the components  $ \tilde A_{k_1,\pm}\in \Gamati,$ there are infinitely many components $\mathcal A_{(a,\pm)} \in\Lambda_S$, one for  each of the the points $(a,\pm)$ such that $-\pi(a)=k$. These points are infinitely many if $m>n$, since the vectors $v_i$ cannot be independent and so $\pi$ must have a kernel $\ker(\pi)$, these components are obtained from a given one by translations by elements of $\ker(\pi)$.
\end{remark}

\subsection{Geometric graph $\GamaG$}\label{seiuno} We define a graph on $\R^n$ using the formulas \eqref{fico} and \eqref{fico1}.

 \begin{definition}\label{lasfe}  An edge $\ell\in X^{-2}_q$ defines a sphere $S_\ell$ through the relation:

\begin{equation}\label{sfera}|x|^2+(x,\sum_i\ell_i v_i)=-\frac{1}{2}( | \sum_i\ell_i v_i|^2+\sum_i \ell_i |v_i|^2 ),
\end{equation}

An edge $\ell\in X^0_q$ defines a plane $H_{\ell}$ through the relation \begin{equation}
\label{iperp} (x,\sum_i\ell_i v_i)=\frac{1}{2}( | \sum_i\ell_i v_i|^2+\sum_i\ell_i |v_i|^2).
\end{equation}
 \end{definition}
\begin{figure}[!ht]
\centering
\begin{minipage}[b]{11cm}
\centering
{
\psfrag{a}{$v_j$}
\psfrag{b}{$v_i$}
\psfrag{c}{$ h_2$}
\psfrag{d}{$ k_2$}
\psfrag{e}{$ h_1$}
\psfrag{f}{$ k_1$}
\psfrag{H}{$H_{\ell}$}
\psfrag{S}{$S_{\ell}$}
\psfrag{m}{$ v_i-v_j$}
\psfrag{l}{$ v_i+v_j$}
\includegraphics[width=11cm]{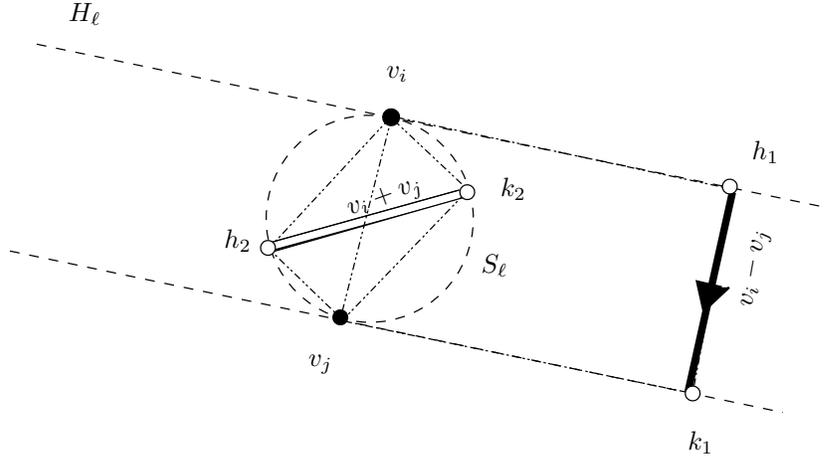}
}
\caption{\footnotesize{The plane $H_{\ell}$ with $\ell= e_j-e_i$ and the sphere $S_{\ell}$  with $\ell= -e_i-e_j$ . The points $h_1,k_1,v_j,v_i$ form  the vertices of a rectangle. Same for  the points $h_2 ,v_i,k_2,v_j$}}\label{fig1}
\end{minipage}
\end{figure}

\begin{definition}\label{ilggg}Each $x\in S_\ell$ is joined by a {\em red} unoriented edge to $-x-\sum_i\ell_iv_i\in S_\ell$.
Each $x\in H_\ell$ is joined by a {\em black} oriented edge to $x-\sum_i\ell_iv_i\in H_{-\ell}$.
We construct the graph  $\GamaG$  with vertices all the points of $\mathbb R^n$ and  edges the black and red edges described.
\end{definition}
It is convenient to mark each edge of the graph with the element $-\pi(\ell)$ from which it comes from.  Notice that Constraint  \ref{co1}     implies that the edge $\ell$ is uniquely determined by the vector $\pi(\ell)$.
\begin{remark}
The points in $H_\ell$ are the initial vertices of an edge    $\ell\in X_q^{(0)}$ ending in the parallel hyperplane $ H_\ell-\sum_i\ell_iv_i= H_{-\ell}$.

The points in $S_\ell$ are the initial vertices of an edge of type $\ell\in X_q^{(2)}$ which is a diameter of the sphere.

\end{remark}

\begin{remark}\label{spco} The completeness Constraint \ref{co1} {\it ii)} on $S$ implies that
the vectors $v_1,\ldots,v_m$ form a component of the graph  $\GamaG$. In this component every two vertices are joined by a red and by a black edge marked respectively $v_i+v_j$ and $v_i-v_j$.

This is independent of the choices of $q,m,n$.
\end{remark}
\begin{definition}
The component $v_1,\ldots,v_m$ is called the {\em special component} of the graph  $\GamaG$. 
\end{definition}
 We want to understand the other connected components of the graph  $\GamaG$, which contain a purely geometric description of the possible components of $\Gama$. Naturally most of the components of the graph  $\GamaG$ will not be formed by integral vectors.
 \smallskip
 
 \subsubsection{A rough estimate}\quad Before we start a fine analysis we may recall first a simple result, which is proved in \cite{MP}:
 \begin{lemma}\label{banal}
 i) The number of  vertices which may be adjacent to a red edge is finite and bounded  by some constant $N(q,n, \max_{i=1}^m(|v_i|)$.
 
 ii) For generic choices of $S$  each path in  $P\in \GamaG$  containing only black edges  cannot contain  two distinct   edges marked with the same  $\ell\in X_q^{(0)}$.
\end{lemma}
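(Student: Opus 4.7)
The plan is to combine the explicit affine descriptions of red and black edges from Definition~\ref{lasfe} with elementary counting in (i) and a symmetric-form argument on the $v_a$'s in (ii).

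For (i), a vertex $x$ is adjacent to a red edge iff $x\in S_\ell$ for some $\ell\in X_q^{-2}$. Completing the square in~\eqref{sfera}, the center of $S_\ell$ is $-\tfrac12\pi(\ell)$ and its radius squared is $\tfrac14|\pi(\ell)|^2-\tfrac12\sum_i\ell_i|v_i|^2$. Since $|\ell|_1\le 2q$ for every such $\ell$, both quantities are bounded by a constant times $q^2V^2$ with $V:=\max_i|v_i|$. Hence every red-adjacent vertex lies in a single ball of radius $O(qV)$ about the origin. Since the graph $\Gama\subset\GamaG$ actually used has vertices in $S^c\subset\Z^n$, counting integer points in this ball gives the bound $C_n(qV)^n=:N(q,n,V)$.

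For (ii), I argue by contradiction. Take a simple black path $x_0,\dots,x_r$ with marks $\ell_k\in X_q^{(0)}$ and orientations $\delta_k\in\{\pm1\}$, so that $x_k=x_{k-1}-\delta_k\pi(\ell_k)$, and suppose that two distinct edges share the mark $\ell$, at positions $i<j$. Both endpoints of an $\ell$-edge must lie in one of the parallel hyperplanes $H_\ell,H_{-\ell}$, with the choice of side determined by the orientation. Setting $L:=\sum_{k=i+1}^{j-1}\delta_k\ell_k$ one has $x_{j-1}-x_i=-\pi(L)$; pairing with $\pi(\ell)$ and subtracting the two affine equations defining $H_{\pm\ell}$ at $x_i$ and $x_{j-1}$ yields
\[
(\pi(L),\pi(\ell))=\eta\,|\pi(\ell)|^2,\qquad \eta\in\{0,\pm1\}.
\]
Expanding as $\sum_{a,b}L_a\ell_b(v_a,v_b)$ and matching coefficients of the symmetric combinations $(v_a,v_b)$, a direct calculation on the symmetric part of $L\otimes\ell$ shows that the identity is nontrivial as a polynomial in the coordinates of $S$ unless $L=-\eta\ell$. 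In the degenerate case $\pi(L)=-\eta\pi(\ell)$ gives $x_{j-1}=x_i+\eta\pi(\ell)$, and combining this with $x_i=x_{i-1}-\delta_i\pi(\ell)$ and the consistency of the signs $\delta_i,\delta_j$ forces $x_{j-1}=x_{i-1}$, contradicting simplicity of the path. Otherwise the displayed identity is a genuine quadratic relation on the $v_a$'s which generic $S$ avoids.

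The main obstacle is that the coefficients of this resonance polynomial grow linearly in the intermediate path length $j-i-1$, while Definition~\ref{gener} only prescribes finitely many resonance polynomials with uniformly bounded coefficients. To overcome this I would reduce to a minimal offending path: whenever some intermediate mark $\ell'\ne\ell$ repeats in $\{\ell_{i+1},\dots,\ell_{j-1}\}$, splicing out the corresponding sub-loop yields a shorter offending path with the same two $\ell$-edges. In a minimal witness the intermediate marks are therefore pairwise distinct, so the length is controlled by the number of marks in $X_q^{(0)}$ with support near that of $\ell$; this in turn is bounded in terms of $q$ and $n$, leaving only finitely many resonance polynomials to incorporate into the generic conditions of~\S\ref{gen}.
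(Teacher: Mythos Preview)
Your approach to both parts is essentially the paper's, and part (i) is fine. For part (ii), the core mechanism---two vertices on the same hyperplane family $H_{\pm\ell}$ forcing a scalar-product relation on the $v_a$'s---is exactly what the paper uses. Two points deserve comment.

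First, your formulation with $\eta\in\{0,\pm1\}$ and the degenerate-case analysis is correct but unnecessarily elaborate. The paper simply takes, for each of the two $\ell$-edges, the vertex \emph{from which $\ell$ exits} (i.e.\ the one in $H_\ell$); both such vertices lie in the same affine hyperplane $H_\ell$, so their difference pairs to zero with $\pi(\ell)$ and one gets directly $(\sum_i n_iv_i,\pi(\ell))=0$ with no case distinction. Your treatment of the degenerate case $L=\pm\eta\ell$ (there is a sign slip) is then subsumed by requiring $\sum_i n_iv_i\neq 0$, which is already part of Constraint~\ref{co1}/\ref{co4}.

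Second, and more seriously, your reduction to a finite list of resonance polynomials is not correctly argued. ``Splicing out the corresponding sub-loop'' is ill-defined: two occurrences of an intermediate mark $\ell'$ in a simple path do \emph{not} in general bound a loop, so there is nothing to excise while keeping the same two $\ell$-edges. The correct reduction (which the paper uses) is different: pass to a \emph{shortest} subpath carrying \emph{any} repeated mark; in such a minimal witness the repeated mark sits at the two endpoints and all edge-marks are pairwise distinct, so the length is at most $|X_q^{(0)}|$. This bound depends on $q$ and $m$, not on $q$ and $n$ as you claim---there is no reason the intermediate marks should have ``support near that of $\ell$''. Since $m$ is fixed, an $m$-dependent finite family of constraints is perfectly acceptable here (the paper records $|n_i|\le (2q)^{m+1}$), and this rough lemma is not meant to fit into the $4q(n+1)$-variable framework of \S\ref{gen}.
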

\begin{proof}
i) Each $\ell \in X_q^{(-2)}$ defines a sphere and on each sphere there are only a finite number of points, at most $R_\ell^{n-1}$, where $R_\ell$ is the radius of the sphere . If a vertex $k$ is adjacent to $\ell$ by definition $k\in S_\ell$; the statement follows.

ii) In a minimal counterexample we suppose that an edge $\ell$ appears twice and all others appear at most once. Let $x_1,x_2$ be the two distinct vertices out of which $\ell$ exits and consider a path $P(x_1,x_2)$ joining them. By applying the linear equations in \eqref{fico}  to the vertices in $P(x_1,x_2)$ one may conclude that
$x_2 =  x_1 + \sum n_i v_i$, where the $n_i$ are integers which depend on $P(x_1,x_2)$. Since  $P(x_1,x_2)$ does not contain any other edge more then once, then $|n_i|\leq (2q)^{m+1}$.  We now write the condition that $x_1,x_2\in H_\ell$, using \eqref{iperp}:
$$ 2(x_1,\sum_i \ell_i v_i)=  \sum_i \ell_i |v_i|^2 +|\sum_i \ell_i v_i|^2\,,\quad 2( x_1+ \sum n_i v_i,\sum_i \ell_i v_i)=  \sum_i \ell_i |v_i|^2 +|\sum_i \ell_i v_i|^2$$
and this may be excluded by requiring $$(\sum n_i v_i,\sum_i \ell_i v_i)\neq  0,\quad \forall \sum n_i v_i\neq 0, |n_i|\leq (2q)^{m+1}
$$ which is a generiticity condition.
\end{proof}
This Lemma immediately implies:
\begin{proposition}\label{rozzo}
For generic choices of $S$ there is a uniform bound on the number of vertices in each connected component of $\GamaG$.
\end{proposition}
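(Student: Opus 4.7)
The plan is to obtain the uniform bound by combining the two parts of Lemma \ref{banal}, bounding a connected component piecewise along its red edges.

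First I would show that each ``black-only'' sub-piece of a connected component has uniformly bounded size. Given any vertex $x_0$ in a component $C$ of $\GamaG$, let $C^b(x_0)$ denote the connected subgraph through $x_0$ in the graph obtained from $C$ by deleting all red edges. By Lemma \ref{banal}(ii), every simple path in $C^b(x_0)$ cannot repeat any label $\ell\in X_q^{(0)}$, so it has length at most $|X_q^{(0)}|$. Moreover the black-degree at every vertex is bounded: the incoming and outgoing black edges at $x$ are in bijection with those $\ell\in X_q^{(0)}$ such that $x\in H_\ell$ or $x\in H_{-\ell}$, respectively, so the degree is at most $2|X_q^{(0)}|$. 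A graph of bounded diameter and bounded degree has bounded cardinality; hence $|C^b(x_0)|\le M$ for some constant $M=M(q,n)$.

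Next I would use Lemma \ref{banal}(i) to control how many such black pieces can occur in a single component. Let $R\subset C$ be the set of vertices of $C$ that are endpoints of at least one red edge. Lemma \ref{banal}(i) gives $|R|\le N_1$ for some constant $N_1=N_1(q,n,\max_i |v_i|)$, independent of $C$.

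Finally, I would assemble the two bounds. If $C$ has no red edges then $C=C^b(x_0)$ and $|C|\le M$. Otherwise, given any $v\in C$, pick any path from $v$ to an element of $R$ (such a path exists since $C$ is connected and $R\ne\emptyset$) and let $r$ be the first vertex of $R$ encountered; the portion of the path from $v$ to $r$ uses only black edges, so $v\in C^b(r)$. Thus
\[
C \;=\; \bigcup_{r\in R\cup\{x_0\}} C^b(r),
\qquad |C|\;\le\;(N_1+1)\,M,
\]
giving the required uniform bound. The step I expect to carry the real content is the translation of Lemma \ref{banal}(ii) into a ball-size bound, where one must be careful that ``path'' means simple path (no repeated edges), so that the restriction on repeated labels $\ell$ genuinely forces the length to stay below $|X_q^{(0)}|$; everything else is then routine bookkeeping on top of the two parts of Lemma \ref{banal}.
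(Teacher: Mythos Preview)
Your proof is correct and follows essentially the same strategy as the paper's: bound the black pieces via Lemma~\ref{banal}(ii), bound the number of red endpoints via Lemma~\ref{banal}(i), and observe that every vertex lies in the black piece of some red endpoint. The paper's own argument is much terser and actually leaves implicit the step you make explicit, namely that a bound on the length of black-only simple paths together with a bound on the black degree yields a bound on the size of each $C^b(x_0)$; so your version is, if anything, a cleaner write-up of the same idea. One small slip: the constant you call $M(q,n)$ should really be $M(q,m)$ (or $M(q,m,n)$), since $|X_q^{(0)}|$ depends on the number $m$ of tangential sites, not on the ambient dimension $n$.
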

\begin{proof}
By Lemma \ref{banal} {\it  ii)} a path made of black edges has a finite (and uniformly bounded) length since each edge may appear at most once.
So the connected components containing black edges have a uniform bound on the number of vertices.
By Lemma \ref{banal} {\it  i)} we may  form a finite block with all the points adjacent to a red edge and all the vertices connected to them. Indeed if a vertex is connected to a vertex in a sphere by a path made of black edges then this path has finite length. 
\end{proof}

This bound is clearly very rough, however to prove  optimal bounds one must work much harder and this we shall do in the rest of the paper.

\subsection{Geometric results\label{georesu}} \label{seidue}
{\bf Our goal}\quad We want to decompose the graph $\GamaG$ into simple blocks, as for instance that of \eqref{EGA1}.  The fact that this may be possible with blocks of at most $n+1$ vertices is suggested by a simple count of parameters, indeed one sees in \eqref{tildegamma} that a tree with $e$ edges occurs when the $e+1$ vertices  (corresponding to $(e+1)n$ incognit\ae) satisfy a set of $ e(n+1)  $ equations.
  
 Indeed, this bound can be achieved for all blocks consisting only of black edges  under all geometric   constraints.

The core of the paper is to prove Theorems \ref{sunto} and \ref{oneone} by imposing a finite number of non-zero polynomial constraints on $S$, 
 Constraints \ref{co1}  are the beginning of this analysis. The  full list  of the explicit geometric constraints is summarized in   Definition \ref{fincon}. 
 \begin{theorem}\label{sunto} For a generic choice of the $v_i$ as in  \ref{fincon} we have:
\begin{enumerate} \item All connected components of the graph $\GamaG$    consisting only of black edges    have at most $n+1$ vertices.

\item There are finitely many components in $\Gama$  containing red edges, each can contain at most $2n$  vertices.
\item The connected components of  $\GamaG$  consisting only of black edges  are divided into a finite number of families.
 \item  Each family in $\GamaG$  is formed by all the graphs obtained from a given one $G$, with $k+1$ affinely independent vertices, under translation by  all the points of  the  $n-k$  dimensional subspace  orthogonal to the span of $G$, minus a union $A$ of a finite number of  lower dimensional affine subspaces.
 
 The translates  $G+a,\ a\in A$  are contained in strictly larger  connected components of $\GamaG$.\smallskip

Moreover
\smallskip

\item All connected components of the graph ${\Lambda_S}$   have
at most $2n$ vertices. The vertices with the same sign are affinely independent.   There may be complicated dependencies between vertices with different signs.

\end{enumerate}
\end{theorem}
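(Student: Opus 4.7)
The plan is to reduce the statement to a finite combinatorial problem via the edge equations \eqref{fico}, \eqref{fico1}, \eqref{fico2}. First I would associate to each finite connected subgraph $G$ of $\GamaG$ a purely combinatorial object: choose a spanning tree rooted at some vertex $x_0$, and record for each edge $e$ the element $\ell_e \in X_q^0 \cup X_q^{-2}$ labelling it. The linear part of each edge equation forces $x_j - x_0 = -\sum_{e \in P(x_0,x_j)} \pi(\ell_e)$, so once $x_0$ is fixed every vertex is determined. The quadratic part of each tree edge then becomes a single \emph{linear} equation in $x_0$ of the form $2(x_0,u_e) = c_e$, where $u_e$ and $c_e$ depend only on the combinatorial shape and on $S$. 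Non-tree black edges contribute further linear equations in $x_0$, and red edges contribute a quadratic sphere equation. Thus the existence of a $\GamaG$-component of prescribed shape is equivalent to the compatibility of a finite affine (resp.\ quadratic) system in $x_0 \in \R^n$ whose coefficients are polynomial in the $v_i$.

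Next, for black-only components I would carry out a rank analysis of this system. If the $k-1$ tree equations are generically linearly independent as linear forms in $x_0$, then for $k-1>n$ they are generically incompatible, and the incompatibility is enforced by a non-vanishing polynomial condition on $S$ in the sense of Definition \ref{gener}. The delicate point, foreshadowed by Example \ref{pirir} and Constraint \ref{co6}, is that for certain combinatorial configurations the forms are linearly dependent for \emph{every} choice of $S$; for those one must instead check that the inhomogeneous system is nevertheless incompatible for generic $S$, using the colored-rank refinement of Theorem \ref{ridma} and the analysis of rank $r=n+1$ graphs in Section \ref{Magt} (which shows the unique solution would then have to lie in $S$, hence not in $S^c$). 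Putting these together yields part (i): every black-only component of $\Gama$ has at most $n+1$ vertices. Since labelled combinatorial shapes with $\leq n+1$ vertices form a finite set, one gets (iii). For (iv), once a shape $G$ is fixed, the solution set in $x_0$ is generically an $(n-k+1)$-dimensional affine subspace orthogonal to $\mathrm{Span}(G-G)$; the exceptional locus $A$ is the union of the finitely many proper affine subspaces defined by the edge equations coming from \emph{other} combinatorial graphs, at which $x_0$ satisfies an extra relation and the component strictly enlarges.

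For part (ii), a red edge forces one endpoint to lie on a sphere $S_\ell$. By Lemma \ref{banal}(i) each such sphere contains only finitely many vertices, and only finitely many $\ell \in X_q^{-2}$ arise, so only finitely many components of $\Gama$ carry any red edges. The vertex bound $2n$ comes from colored rank: a red edge contributes a genuine quadratic equation together with its linear one, and extending by black edges uses up one affine dimension per edge on either side of the sphere until incompatibility sets in; making this precise is exactly the content of Theorem \ref{ridma}.

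Finally part (v) is obtained by pushing the analysis back to $\Lambda_S$ through the momentum map $\pi$: by Proposition \ref{coqe2} and Corollary \ref{cabel}, a connected component of $\Lambda_S$ is in bijection with one of $\Gama$, so the $2n$ bound transfers. Affine independence of same-sign vertices $(a_1,+),\dots,(a_r,+)$ is equivalent to the non-existence of an integer relation $\sum c_i a_i = 0$ with $\sum c_i = 0$ among the tags; any such relation would translate, via the path identities $a_j - a_i = \sum \ell^{(ij)}_h e_h$, into a non-trivial integer relation among the $v_i$ excluded by the linear generic constraints of \S\ref{gen}. The main obstacle is not the dimension count per se but the classification and exclusion of the rank-deficient pathological combinatorial graphs; this is the reason one must go through the full combinatorial apparatus of Sections \ref{Gra}--\ref{Magt} rather than a naive parameter count.
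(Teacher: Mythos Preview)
Your overall strategy matches the paper's: reduce to finitely many \emph{possible combinatorial graphs} and appeal to Theorem~\ref{ridma} and the rank $n+1$ analysis of \S\ref{Magt} (Theorem~\ref{aMT}). Two points, however, are imprecise or incorrect.

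\textbf{Part (ii), the $2n$ bound.} Your phrase ``uses up one affine dimension per edge on either side of the sphere'' does not capture the actual counting. The paper's argument is this: by Theorem~\ref{ridma}, vertices of each color are linearly independent. Moreover black vertices $a$ have $\eta(a)=0$ and red vertices have $\eta(a)=-2$, so a single red vertex is automatically independent of any collection of black ones. Hence if there are $n$ black vertices and at least one red (or vice versa), the rank is already $\geq n+1$, and Theorem~\ref{aMT} forces the realization into the special component. So one can have at most $n-1$ vertices of one color and $n$ of the other, giving with the root at most $2n$ vertices. Your sketch omits the mass argument that makes red and black independent of each other.

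\textbf{Part (v), affine independence of same--sign vertices.} Your claim that an affine relation $\sum c_i a_i=0$, $\sum c_i=0$ among same--sign vertices ``translates into a non--trivial integer relation among the $v_i$ excluded by the linear generic constraints'' is wrong. The $a_i$ live in $\Z^m$, and such a relation either holds or fails there \emph{independently of $S$}; no linear condition on the $v_i$ is involved. The correct mechanism is Theorem~\ref{ridma}: for same--color vertices one has $\sum n_a C(a)=\pm\sum n_a a^2$, which is a non--zero quadratic polynomial in the $e_i$ unless the relation is trivial; applying $\pi$ gives an \emph{avoidable resonance}, a quadratic inequality in the $(v_i,v_j)$ excluded by Constraint~\ref{co6}. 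So the exclusion is quadratic, not linear, and comes from Constraint~\ref{co6}, not from the linear inequalities of \S\ref{gen}.
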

   \begin{proof}
See \S \ref{suntoS}.
\end{proof}

The next result relates the three graphs  ${\Lambda_S},\Gamati,\Gama$.  Take a frequency $\mu\in\Z^m_c$,  let $\A_{(\mu,+)}$ be the   component in ${\Lambda_S}$ of $(\mu,+)$ and set $k=-\pi(\mu)$. From Formula \eqref{llab} the associated component     in $\Gamati$ is the one of the element $z_k$ and will be denoted by  $\tilde A_{(k,+)}$. Finally in the geometric graph $ \Gama$ we have the  component of  the element $ k$ which will be denoted by  $  A_k$.  Similar  description for  $(\mu,-)$.   \begin{theorem}\label{oneone} For a generic choice of $S$ the map $-\pi$ establishes a graph isomorphism between $\A_{(\mu,\pm)}$ and $\tilde A_{(k,\pm)}$, which is also mapped  isomorphically to $A_k$. All these maps are compatible with the markings.

 \end{theorem}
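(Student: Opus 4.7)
The plan is to verify that the three graphs $\Lambda_S$, $\Gamati$, $\Gama$ encode the same combinatorial data, and then use genericity to upgrade the trivial morphism statement to an isomorphism. The first step is purely formal: the rules of \S\ref{Trules} make $(a,+)\mapsto z_{-\pi(a)}$, $(a,-)\mapsto \bar z_{-\pi(a)}$ a graph homomorphism $\Lambda_S\to \Gamati$ preserving the edge markings. Indeed, for $\ell\in X_q^0$ the relation $b-a=\ell$ in $\Lambda_S$ translates directly into the linear-plus-quadratic relation on $(-\pi(a),-\pi(b))$ defining the corresponding black edge of $\Gamati$; similarly for $\ell\in X_q^{-2}$, the relation $a+b=\ell$ gives the red edge. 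Forgetting the distinction $z/\bar z$ then yields a further morphism $\Gamati\to\Gama$, and surjectivity on components is immediate by path-lifting: given any edge of $\Gamati$ at the current image vertex, exactly one edge of $\Lambda_S$ at the current lift realises it --- namely $(a,\sigma)\to(a+\ell,\sigma)$ for a black edge or $(a,\sigma)\to(\ell-a,-\sigma)$ for a red edge.

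The essential step is injectivity. Suppose $(a,\sigma),(a',\sigma)\in\A_{(\mu,+)}$ have the same image in $\Gamati$, i.e. $-\pi(a)=-\pi(a')$ with equal signs. Reading the path between them in $\Lambda_S$ as a sequence of edges $\ell_1,\dots,\ell_r\in X_q$, and tracking how red-edge reflections flip signs, one finds $a-a'=\sum_i\epsilon_i\ell_i$ with $\epsilon_i\in\{\pm1\}$, and necessarily $\pi(a-a')=0$. By Theorem \ref{sunto} the component has at most $2n$ vertices, so the path is of bounded length; by (the proof of) Lemma \ref{banal} ii), genericity forces each element of $X_q$ to appear at most once in any such path. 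Thus $a-a'$ is a bounded $\pm 1$-combination of distinct elements of the finite set $X_q$ lying in $\ker\pi$, and the linear generiticity inequalities of Definition \ref{fincon} i) applied to the $v_j$ rule out every such combination except the trivial one. This forces $a=a'$ and proves the first isomorphism $\A_{(\mu,\pm)}\cong \tilde A_{(k,\pm)}$.

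For the further projection $\tilde A_{(k,+)}\to A_k$, one must exclude the possibility that some $h\in S^c$ occurs both as $z_h$ and as $\bar z_h$ inside the same component $\tilde A_{(k,+)}$. Such a coincidence would manifest as a path in $\Lambda_S$ from $(\mu,+)$ to a pair of vertices $(a,+),(a',-)$ with $-\pi(a)=-\pi(a')$, and as in the injectivity argument above its existence produces a relation of the form $\sum_i\epsilon_i\pi(\ell_i)=0$ involving a bounded $\pm 1$-combination of edges together with an odd number of red-edge reflections. The associated polynomial relation on the $v_j$ is again excluded by the list of linear and quadratic resonances in Definition \ref{fincon}. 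Combined with the markings being preserved by construction, this yields the required graph isomorphism $\A_{(\mu,\pm)}\cong\tilde A_{(k,\pm)}\cong A_k$.

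The main obstacle is precisely this combinatorial bookkeeping: each possible cycle in $\Gamati$ and each possible $z_h\leftrightarrow \bar z_h$ path must be translated into an explicit non-zero polynomial in the coordinates of $v_1,\dots,v_m$ so that it can be incorporated into the finite list of generic resonances of Definition \ref{fincon}. The bound on component size from Theorem \ref{sunto} is crucial here, since it guarantees that only finitely many edge combinations need to be excluded.
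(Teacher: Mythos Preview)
Your overall approach is the paper's: bound component size by Theorem \ref{sunto}, then show the projection $-\pi$ has no collapsing on components by invoking the genericity constraints. The paper packages this second step as Corollary \ref{cabel} (via the Cayley--graph lifting criterion of Corollary \ref{respro}), while you re-derive it inline; that is fine.

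Two points, however. First, the appeal to Lemma \ref{banal} ii) is misplaced: that lemma concerns black-only paths in $\GamaG$, not paths in $\Lambda_S$, and in any case you do not need distinctness of the $\ell_i$. The bound $|a-a'|_1\le 2q(2n-1)<4q(n+1)$ follows directly from the path-length bound coming from Theorem \ref{sunto}, which is exactly what Constraint \ref{co4} requires.

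Second, and more substantively, the odd case is not what you write. If $(a,+)$ and $(a',-)$ lie in the same component with $-\pi(a)=-\pi(a')=h$, the group element carrying one to the other is $g=(a+a',\,-)$, and one computes $\pi(a+a')=-2h$, \emph{not} $0$. So no linear relation $\sum_i\epsilon_i\pi(\ell_i)=0$ drops out. What one gets instead is that $h=-\tfrac12\pi(a+a')$ is determined by the $v_j$; to turn this into a polynomial constraint on $S$ alone one needs a second equation satisfied by $h$. The paper obtains it by noting that an odd loop can be started at a red edge, so $h\in S_\ell$ for some $\ell\in X_q^{-2}$; substituting $h=-\tfrac12\pi(a+a')$ into the sphere equation gives precisely Formula \eqref{pririn}, i.e.\ Constraint \ref{co5}. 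One must also check that this quadratic expression is not identically zero: the paper shows the only degenerate case is $a+a'=-2e_i$, forcing $h=v_i\in S$, which is excluded since $h\in S^c$. Your last paragraph gestures at ``linear and quadratic resonances'' but the displayed relation $\sum_i\epsilon_i\pi(\ell_i)=0$ is simply wrong for this case; the argument needs the sphere substitution to close.
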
   \begin{proof}
See \S \ref{oneoneS}.
\end{proof}
 In particular the space  spanned by all transforms of  $e^{\ii \mu.x}z_k$   applying the operator $ ad(N)$ has a basis extracted from the frequency basis in  correspondence, under $-\pi$,   with  the vertices of  $A_k$. 

    All other   connected components of ${\Lambda_S}$ lying over $A_k$     are obtained from $\A_{(\mu,\pm)}$  by adding all the elements $ \nu $ such that $\pi(\nu)=0$.

 \smallskip

\section{A formalization of the graphs.}  \label{sette}
 The rules \eqref{fico},\ \eqref{fico1}, \eqref{fico2}  determine the edges of the three  graphs   ${\Lambda_S},\Gamati,\Gama$ that we have introduced in \S \ref{georesu}.  These rules consist of a linear and a quadratic constraint which encode respectively  the conservation of momentum and of quadratic energy (i.e. the fact that we have kept only resonant terms).  We want to see first  that, if we implement only the linear rules,  the graphs we construct are contained in some Cayley graphs (see the Appendix for the relevant definitions). Next  we show that the quadratic rules   select, inside the large Cayley graphs, the graphs of our interest.
 \subsection{The linear momentum constraints}
  Denote by  ${\Z^m}:=\{\sum_{i=1}^ma_ie_i,\  a_i\in\Z\}$  the lattice  with basis the elements $e_i$.

We consider  the group $G:={\Z^m}\rtimes\Z/(2)$\footnote{The notation $\rtimes$ stands for {\em semidirect product}} of couples $(a,\sigma)$ with $a\in \Z^m$ $\sigma=\pm$. The group structure is given by the rules
$$ (a,+)(b,+)=(a+b,+) \,,\quad (a,-) (b,+)= (a-b,-)\,,$$ $$(a,+)(b,-)= (a+b,-)\,,\quad (a,-)(b,-)=(a-b,+).$$
It will be notationally convenient to identify by $a$ the operator of left multiplication by $(a,+)$ and by $\tau$ the operator of left multiplication by $(0,-)$ so that 
$$ (a,+)= a(0,+)\,,\quad (a,-)= a \tau (0,+).$$   Note the commutation rules $a\tau= \tau (-a)$.
 Sometimes we refer to the elements $a=(a,+)$  as {\em black} and $a\tau=(a,-)$  as {\em red}.  

  Recall we defined  the  mass  in Formula \eqref{MMM} by  $ \eta:{\Z^m}\to \Z,\  \eta(e_i):=1$.
 If $p\in\Z$ it is easily seen that the set  $G_p:= \{a\,:\ \eta(a)=0,\   a\tau\,:\, \eta(a)=p\}$ form a subgroup. In particular $G_{-2}$ is  generated by the elements $e_i-e_j,(-e_i-e_j)\tau$.

 The group $G$ has also a simple geometric interpretation, for any $a\in{\Z^m}$ the element $a $ acts  on ${\Z^m}$    as the translation  $t_a:x\mapsto x+a$, while the element $\tau$ is the sign change $\tau:x\to -x$, so $ a\tau$ acts by $x\mapsto a-x$.

\begin{remark}
In our dynamical setting, we have chosen a list of vectors $v_i$ and  defined (cf. Formula \eqref{MMM})  $\pi:\Z^m\to \R^n$ by $\pi: e_i\mapsto v_i$.
 
We can think of $G$ also as linear operators on $\R^n$ by setting
\begin{equation}\label{azione}
a  k:=  -\pi(a)+ k,\ k\in\R^n,\ a\in \Z^m\,,\quad \tau k:= - k
\end{equation} 
\end{remark}

For each $q=1,2,\ldots$ we  consider the Cayley graphs in   $G, \Z^m,\R^n$ associated to the set $ X=\{ X_q^{ 0 }=(X_q^{ 0 },+),\ X_q^{-2}\tau=(X_q^{-2},-)\}$ (cf. Formula \eqref{glied}).  
Notice that, for all $a\in {\Z^m}$,  we have $(a,-)^2=(0,+)=Id$, the identity of $G$. In particular $X=X^{-1}$. \smallskip

 We take two elements $ (a, \sigma ),\  (b, \rho)   \in G$ ($\sigma,\rho=\pm$). We  have \begin{equation}
 \label{pasl}(b,\rho) (a,\sigma)^{-1}=b (0,\rho)(0,\sigma)(- a) =\begin{cases}
 b-a \quad \text{if}\ \rho=\sigma \\ (a+b)\tau \ \text{if}\ \rho\neq \sigma 
\end{cases}.
\end{equation}
Therefore  $ (a ,\sigma) ,\  (b ,\rho) $ are joined by an oriented edge marked with $\ell\in X^0$  if $\sigma=\rho$ and $b-a=\ell$, while $  (a ,\sigma) ,\  (b ,\rho) $ are joined by an edge marked with $ \ell\tau,\ \ell\in X^{-2}$  if $\sigma=-\rho$ and $a+b=\ell$.
Graphically 
 $$  \xymatrix{ (b,+) \ar@{<-}[r]^{\ell}& (a,+)  },\quad  \xymatrix{(b,-) \ar@{<-}[r]^{\ell}& (a,-) }\,,\quad \xymatrix{ (a,+) \ar@{-}[r]^{\ell\tau}& (b,-)  }. $$
We have obtained the same picture as in \S \ref{Trules}, only now we are not imposing the {\em quadratic} constraint $h=-\pi(a)$, $k=-\pi(b)$ where $(h,k)$ satisfy \eqref{fico} or \eqref{fico1}. We usually drop the $\tau$ in the marking of the unoriented edges associated to $\ell\in X_q^{-2}$.

The significance of this choice is: 
\begin{proposition}
\begin{enumerate}
\item The elements $  X_q^{ 0 },\ X_q^{-2}\tau$ generate $G_{-2}$.

 \item The Cayley graph $\R^n_X$ contains the geometric graph $\GamaG$ of Definition \ref{geogra}.
\medskip

{\em  
 We  identify the  basis $F_B$ of Formula \eqref{llab}  with   $\Z^m_c\times\Z/(2)\subset G$ then:} 
 \smallskip
 
   \item The graph $\Lambda_S$ (cf. \ref{iduegrafi}) is a subgraph of the Cayley graph $G_X$.

\item Each connected component of $\Lambda_S$   is a full\footnote{a full subgraph of a graph  $\Gamma$ consists of a subset of the vertices and all the edges in $\Gamma$ between these chosen vertices} subgraph of the Cayley graph $G_X$.
\end{enumerate}
\end{proposition}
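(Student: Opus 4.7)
The plan is to verify the four assertions in order, with the real content concentrated in (4). For (1), observe that for any $q\geq 1$, padding with $q-1$ trivial pairs $e_1-e_1$ gives $e_i-e_j\in X_q^0$ and $-e_i-e_j\in X_q^{-2}$ for all $i\neq j$. The vectors $e_i-e_j$ already generate $\ker(\eta)\subset\Z^m$, so the subgroup of $G$ generated by $X_q^0$ equals $\{(a,+)\,:\,\eta(a)=0\}$. Adjoining the single element $(-e_1-e_2)\tau\in X_q^{-2}\tau$ then produces every element of type $(a,-)\in G_{-2}$, since such an element factors as $(a+e_1+e_2,+)\cdot(-e_1-e_2,-)$ and $\eta(a+e_1+e_2)=0$.

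For (2) and (3) the argument is direct comparison of the definitions. By \eqref{azione}, the action of $\ell\in X_q^0$ on $\R^n$ is $x\mapsto x-\pi(\ell)$, and that of $\ell\tau$ with $\ell\in X_q^{-2}$ is $x\mapsto -x-\pi(\ell)$. These coincide with the targets of the black and red edges in Definition \ref{ilggg}, the only extra stipulation being that the source lie on a prescribed plane $H_\ell$ or sphere $S_\ell$. Hence every edge of $\GamaG$ is an edge of $\R^n_X$, proving (2). The same observation handles (3): the vertices of $\Lambda_S$ lie in $F_B\cong\Z^m_c\times\Z/(2)\subset G$, and the linear part of the rules in \S\ref{Trules} is precisely the condition $(b,\rho)(a,\sigma)^{-1}\in X$ that defines an edge of $G_X$.

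The substantive claim is (4). I would use Remark \ref{Kcos}: since $\{K,\mathcal{Q}\}=0$, the operator $ad(\mathcal{Q})$ preserves every $K$-eigenspace, and by \eqref{eiK} this eigenvalue on a basis element is
\[
\kappa(\mu,\sigma):=\sigma\Bigl(\sum_i\mu_i|v_i|^2+|\pi(\mu)|^2\Bigr),\qquad \sigma\in\{\pm 1\};
\]
hence $\kappa$ is constant on every connected component of $\Lambda_S$. The key point is that on any $G_X$-edge between two elements of $F_B$, constancy of $\kappa$ is \emph{equivalent} to the quadratic defining constraint of $\Lambda_S$. Indeed, for a black edge $(a,\sigma)\to(b,\sigma)$ via $\ell\in X_q^0$ with $b=a+\ell$, a direct computation gives
\[
\kappa(b,\sigma)-\kappa(a,\sigma)=\sigma\Bigl(\sum_i\ell_i|v_i|^2+|\pi(b)|^2-|\pi(a)|^2\Bigr),
\]
whose vanishing is precisely the quadratic part of \eqref{fico} (or \eqref{fico2}) with $h=-\pi(b)$, $k=-\pi(a)$. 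For a red edge $(a,+)\to(b,-)$ via $\ell\tau$ with $\ell\in X_q^{-2}$ and $a+b=\ell$, one obtains
\[
\kappa(b,-)-\kappa(a,+)=-\Bigl(\sum_i\ell_i|v_i|^2+|\pi(a)|^2+|\pi(b)|^2\Bigr),
\]
whose vanishing is the quadratic part of \eqref{fico1}. Consequently, if two vertices of a single connected component of $\Lambda_S$ are also joined by an edge of $G_X$, then $\kappa$-equality forces the quadratic constraint, and the edge belongs to $\Lambda_S$, which is the fullness claim in (4).

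The only real obstacle is identifying the $K$-invariance of Remark \ref{Kcos} as the mechanism that couples linear $G_X$-edges with the quadratic $\Lambda_S$-constraint; the remaining verifications are careful but routine bookkeeping of sign conventions through the relation $k=-\pi(\mu)$.
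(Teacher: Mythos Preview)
Your proof is correct and follows essentially the same route as the paper. The paper does not supply a proof immediately after the Proposition; instead it develops the energy function $K(u)$ in \S\ref{gerea} (your $\kappa$ is exactly this $K$, cf.\ \eqref{ricon} and \eqref{eiK}) and then proves part~(4) as Corollary~\ref{coqe} via the same mechanism you identify: $K$ is constant on each connected component of $\Lambda_S$, and a $G_X$-edge between equal-energy vertices is by definition compatible, hence lies in $\Lambda_S$.
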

In view of the previous Proposition we set a:
\begin{definition}\label{CMG}
A {\em complete marked graph}, on a set $A\subset    {\Z^m}\rtimes\Z/(2)$ is the full sub--graph generated by the vertices   in $A$.
\end{definition}
In fact using conservation of mass and the action of $G$ on $\Z^m$, it is even better to consider  $\Lambda_S$  lying in the orbit of $G_{-2}$ in $\Z^m$  formed of elements $a\in\Z^m\,|\,\eta(a)=0,-2. $ This identification is not canonical but depends on  the choice of a {\em root} $r\in\Lambda_S$  that  corresponds to 0.
\smallskip

There are symmetries in the graph. The symmetric group $S_m$ of the $m!$ permutations of the elements $e_i$ preserves the graph. By Lemma \ref{propp}  we have the right action of $G$, on  the graph:
\begin{equation}
\label{sitr}(b,\sigma)\mapsto (b,\sigma)\tau=b \sigma\tau,\quad  (b,\sigma)\mapsto (b,\sigma) a= (b+\sigma a,\sigma),\ \forall a,b\in{\Z^m}. 
\end{equation}  
 Up to the $G$ action  any subgraph can be translated to one containing $(0,+)$ in this way we keep only the combinatorial information.

\subsection{The quadratic energy constraints\label{gerea}}

We consider $\R^m$   with the standard scalar product.  

Given a list $S$ of  $m$ vectors $v_i\in\R^n$, we have defined the  linear map $\pi:e_i\mapsto v_i $. 

Let    $S^2[{\Z^m}]:=\{\sum_{i,j=1}^ma_{i,j}e_ie_j\},\  a_{i,j}\in\Z$ be   the  polynomials of degree $2$  in the $e_i$ with integer coefficients. We extend  the map $\pi$ and introduce a linear map $L^{(2)}: a\mapsto a^{(2)}$ as:
 $$ \pi(e_i) = v_i,\quad 
\pi(e_ie_j):=(v_i,v_j),  \quad L^{(2)}:{\Z^m}\to S^2({\Z^m}),\  \ a= \sum a_i e_i \mapsto a^{(2)}:= \sum a_i e_i^2 .$$ 

We have $   \pi(AB)=(\pi(A),\pi(B)),\forall A,B\in{\Z^m}.$
 
\begin{remark}
Notice that we have   $a^{(2)}=a^2$ if and only if  $a$ equals 0 or one of the variables $e_i$.
\end{remark} 
 
\begin{definition}
Given an element $u= (a,\sigma)=(\sum_ia_ie_i,\sigma) \in G $ set 
 
\begin{equation}\label{ricon} 
 C(u):= \frac{\sigma }{2}(a^2+a^{(2)})  ,\quad   \frac{1 }{2} K(u): =\pi(C(u) )=\frac{\sigma }{2} (|\sum_ia_iv_i|^2+\sum_ia_i|v_i|^2).
\end{equation}We call $K(u)$ the {\em energy} of $u$,    this is exactly the eigenvalue of $K$  given by Formula \eqref{eiK}.
 \smallskip
 
  \end{definition}
  Notice that $C(u)$ has integer coefficients.

For   $u= (a,\sigma) $ and 
   $ g=(\sum_in_ie_i,  \rho)$ consider $gu=(b, \sigma\rho),\ b=\sum_in_ie_i+\rho a$. We have 
\begin{equation}\label{vee}
C(gu)=\sigma   C(g)+ C(u)+ (\rho-1) \frac{\sigma }{2}  a^2+ \sigma   (\sum_in_ie_i)a.
\end{equation}
 From \eqref{vee} we see that $K(gu)=  K(u)$ if and only if:
\begin{equation}\label{vee1}
0=  K(g)+ (\rho-1)   |\pi(a)|^2+    2 (\sum_in_iv_i,\pi(a)).
\end{equation}
\begin{definition}
Given an edge $\xymatrix{u\ar@{->}[r]^{x} &v    },$ $u=(a,\sigma),v=(b,\rho)=xu,\ x\in X_q$,    we say that the edge is {\em compatible} with $S$ or $\pi$ if
$K(u) =K(v) $. 
\end{definition}
As in the previous section we identify the  basis $F_B$ of Formula \eqref{llab}  with   $\Z^m_c\rtimes\Z/(2)\subset G$.  
\begin{proposition}\label{gliegg} The  graph $\Lambda_S$  of Definition \ref{iduegrafi} is  the subgraph of $G_X$ in which we only keep the compatible edges and the vertices in $\Z^m_c\rtimes\Z/(2)$.
  \end{proposition}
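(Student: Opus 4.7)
The plan is to verify separately the two ingredients that go into an edge of $\Lambda_S$: a linear (momentum) condition, which should correspond to the Cayley edge relation in $G_X$, and a quadratic (energy) condition, which should correspond to the compatibility $K(u)=K(v)$. For the vertex set the identification $F_B\simeq \Z^m_c\times\Z/(2)$ is immediate from Formula \eqref{llab}, so the vertex sets match by construction and the whole content of the proposition reduces to matching edges.

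First I would take a pair of vertices $u=(a,\sigma)$, $v=(b,\rho)$ in $\Z^m_c\rtimes \Z/(2)$ and set $g=vu^{-1}$. By Proposition \ref{quadrat} and the case analysis in \S\ref{Trules}, an edge of $\Lambda_S$ between $u$ and $v$ corresponds to one of the rules \eqref{fico}, \eqref{fico1}, \eqref{fico2}, each of which splits into a linear and a quadratic part. The linear parts assert, after translating via $-\pi(a)=k$ and $-\pi(b)=h$, that either $b-a=\ell\in X_q^{0}$ (with $\sigma=\rho$) or $a+b=\ell\in X_q^{-2}$ (with $\sigma=-\rho$); by Formula \eqref{pasl} this is precisely the condition $g\in X=X_q^{0}\cup X_q^{-2}\tau$, i.e.\ that $u\to v$ is an edge of $G_X$.

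The core step is to check that the residual quadratic condition in \eqref{fico}, \eqref{fico1}, \eqref{fico2} coincides with $K(u)=K(v)$. For this I would invoke Formula \eqref{vee1}, which says exactly when $K(gu)=K(u)$. For a black edge $g=\ell\in X_q^{0}$ ($\rho=1$) the identity collapses to $K(\ell)+2(\pi(\ell),\pi(a))=0$; substituting $k=-\pi(a)$ and $h=k-\pi(\ell)$ one rearranges this to $\sum_j \ell_j|v_j|^2+|h|^2-|k|^2=0$, the energy condition of \eqref{fico}. For a red edge $g=\ell\tau$ with $\ell\in X_q^{-2}$ ($\rho=-1$), using $K(\ell\tau)=-K(\ell)$, the identity becomes $K(\ell)+2|\pi(a)|^2-2(\pi(\ell),\pi(a))=0$, which after substituting $k=-\pi(a)$ and $h=-\pi(\ell)-k$ matches $\sum_j \ell_j|v_j|^2+|h|^2+|k|^2=0$, the energy condition of \eqref{fico1}. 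The cases $\sigma=-$ and the case of $\bar z_h\bar z_k$ in \eqref{fico2} are obtained by applying $\tau$ to both sides and using $K(\tau u)=-K(u)$.

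The only real obstacle is bookkeeping: carefully tracking the interplay between the signs $\sigma,\rho$ in the group law of $G=\Z^m\rtimes\Z/(2)$, the $\pm$ conventions for the frequency basis $F_B$ in \S\ref{Trules}, and the orientation conventions on the oriented edges coming from $X_q^{0}$. Once this is done, one last remark is needed to conclude the equivalence: whenever the linear and quadratic conditions both hold, the associated coefficient $c(\ell)$ in \eqref{cl} is a sum of positive monomials in the $\xi_i$ (non-empty as $\mathcal P_\ell\neq\emptyset$ by construction), hence non-zero, so no compatible edge is spuriously suppressed by an accidental cancellation in the matrix $Q$.
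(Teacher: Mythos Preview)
Your proposal is correct and follows essentially the same route as the paper: match the linear momentum constraint to the Cayley edge relation via Formula \eqref{pasl}, and then identify the quadratic energy constraint \eqref{fico}--\eqref{fico2} with the compatibility condition $K(u)=K(v)$ via Formula \eqref{vee1}. Your closing remark that $c(\ell)\neq 0$ (so no compatible edge is accidentally suppressed in the matrix $Q$) is a point the paper leaves implicit, and it is worth making explicit.
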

\begin{proof}
{\it i)}\quad If we have $a\in\Z^m$  and $b=(\ell,1)a=\ell+a$, set $k:=-\pi(b),\ h:= -\pi(a)$,  we have $k+\pi(\ell)=h$. The condition $K(a)=K(b)$  is given by formula \eqref{vee1} with $g=(\ell,1)$. This in turn gives formula \eqref{iperp} with $x= h$, i.e. implies the fact that $h\in H_\ell$ or equivalently that $h,k$ satisfy the equations \eqref{fico}.

Similarly if $b=(\ell,\tau)a=(\ell-a,\tau)$ we have $\pi(\ell)+h+k=0$, the condition $K(a)=K(b)$  is given by \eqref{vee1} with $g=(\ell,\tau)$, this gives formula \eqref{sfera} with $x=h$ or $x=k$, i.e. implies the fact that $h,k\in S_\ell$ or equivalently that $h,k$ satisfy the equations \eqref{fico1}. 
\end{proof}
 \begin{example}
In our Example \ref{EGG} consider the component $\A_{(a,+)}$ in \eqref{EGA}. By construction the  edges appear in the Cayley graph, moreover the condition that all the vertices have the same energy are the equations in \eqref{tildegamma}. The projection of  $\A_{(a,+)}$ with the map $-\pi$ gives $A_{k_1}$ in \eqref{EGA1}.
\end{example}
    This Proposition is the combinatorial counterpart of  {\em conservation of the quadratic energy $K$}  computed in Formula \eqref{eiK} and summarized as:
  \begin{itemize} 
\item If $u,v $ are in the same connected component of ${\Lambda_S}$ we have  $K(u) =K(v )$.
 \item Under the map $-\pi$, the component $A$ maps to a connected component $C$ of $\Gama$
 \end{itemize}  \begin{corollary}\label{coqe}

 A connected component $A$ of ${\Lambda_S}$ is a complete subgraph (cf. Definition \ref{CMG}) of $G_X$.
  \end{corollary}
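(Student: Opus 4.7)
The plan is to combine the characterization of $\Lambda_S$ as a subgraph of the Cayley graph $G_X$ given by Proposition \ref{gliegg} with the invariance of the energy $K$ along paths. Let $A$ be a connected component of $\Lambda_S$. By Definition \ref{CMG}, to show that $A$ is a complete marked subgraph of $G_X$ it suffices to verify that whenever $u,v\in A$ and there is an edge of $G_X$ joining $u$ to $v$, that edge already belongs to $\Lambda_S$.

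First I would observe that the energy $K$ is constant on $A$. By Proposition \ref{gliegg}, every edge of $\Lambda_S$ is compatible, i.e.\ its two endpoints have the same $K$-value. Since any two vertices of $A$ are joined by a path in $\Lambda_S$, compatibility propagates along the path and yields $K(u)=K(v)$ for all $u,v\in A$. Moreover, being vertices of $\Lambda_S$, both $u$ and $v$ lie in $\Z^m_c\rtimes\Z/(2)$.

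Now suppose $u,v\in A$ are joined by an edge in the Cayley graph $G_X$, so that $vu^{-1}\in X=X_q^0\cup X_q^{-2}\tau$. Since $K(u)=K(v)$ this edge is compatible in the sense of the definition preceding Proposition \ref{gliegg}. Applying Proposition \ref{gliegg} again, the edge belongs to $\Lambda_S$. Because $u$ and $v$ are already in the connected component $A$, this new edge is also in $A$. Thus $A$ contains every $G_X$-edge between its vertices, which is precisely the assertion that $A$ is a full (i.e.\ complete marked) subgraph of $G_X$.

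The main (and only) conceptual point is the propagation of the invariant $K$ along paths, which is already built into the definition of compatibility; once that is noted, the statement reduces to unwinding Proposition \ref{gliegg} and Definition \ref{CMG}, so no further technical obstacle arises.
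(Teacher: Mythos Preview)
Your proof is correct and follows essentially the same approach as the paper's: both arguments hinge on the observation that the energy $K$ is constant along a connected component of $\Lambda_S$, so any $G_X$-edge between two of its vertices is automatically compatible and hence already an edge of $\Lambda_S$. The paper phrases this by first passing to the full subgraph of $G_X$ on the entire level set $\{v:K(v)=K(u)\}$ and then taking the connected component through $u$, whereas you argue directly on the component $A$; the content is the same.
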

 \begin{proof}
Fix an element $u$ of which we want to find the component. Consider the set of all elements $v$  with the same energy as  $u$.  They determine a complete (or full) subgraph of the graph $G_X$, and an edge in this subgraph is by construction compatible, thus the component passing through $u$ of this graph is  the required one.
\end{proof}  
 \section{Graph isomorphism}\label{otto}
 We wish to identify the connected components of $\Lambda_S$ with those of $\Gama$.   
  \begin{proposition}\label{coqe2}
i)\quad Under the map $(a,\sigma)\mapsto -\pi(a)$  the graph $\Lambda_S$ maps surjectively to the geometric graph $\Gama$. The image of an edge in $\Lambda_S$  is an edge in $\Gama$.
 
 ii)\quad The preimage of an   edge in $\Gama$  is a set of  edges  in $\Lambda_S$  which are simply permuted by right translations under the subgroup $\ker(\pi)$ of $\Z^m$. 
\end{proposition}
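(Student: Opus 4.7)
The plan is to use the algebraic realization of $\Lambda_S$ from Proposition \ref{gliegg} as the full subgraph of the Cayley graph $G_X$ on the compatible vertices in $\Z^m_c \times \Z/(2)$, together with the right action formula \eqref{sitr}.

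For part {\it i)}, any vertex $(a,\sigma)$ of $\Lambda_S$ lies in $\Z^m_c \times \Z/(2)$, so by \eqref{zmc} we have $-\pi(a) \in S^c$, which is a vertex of $\Gamma_S$. An edge of $\Lambda_S$ is either black, of the form $(a,\sigma) \to (b,\sigma) = (a+\ell,\sigma)$ with $\ell \in X_q^0$ and $K(a)=K(b)$, or red, between $(a,+)$ and $(\ell-a,-)$ with $\ell \in X_q^{-2}$ and the analogous compatibility. Setting $h = -\pi(a)$, $k = -\pi(b)$, the compatibility $K(a)=K(b)$, unpacked via \eqref{vee1}, is exactly the quadratic equation in \eqref{fico} respectively \eqref{fico1}, and the relation $b-a=\ell$ (resp.\ $a+b=\ell$) translates to the linear condition there. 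Hence $\{h,k\}$ is joined in $\Gamma_S$ by an edge of matching color marked $-\pi(\ell)$. Surjectivity follows since any $k \in S^c \subset \pi(\Z^m)$ is hit by some $(a,\pm)$ with $-\pi(a)=k$, and any edge in $\Gamma_S$ lifts by fixing one endpoint $(a,\sigma)$ and taking the partner determined by the edge's color and the associated $\ell$, which is uniquely determined by item iii) of Constraint \ref{co1}.

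For part {\it ii)}, the right action formula \eqref{sitr} reads $(a,\sigma)\cdot n = (a+\sigma n,\sigma)$ for $n \in \Z^m$. When $n \in \ker(\pi)$ this preserves $-\pi$ on each endpoint and, since $n$ carries sign $+$, preserves the edge relations $b-a=\ell$ and $a+b=\ell$; hence $\ker(\pi)$ permutes the set of preimage edges of any given edge of $\Gamma_S$. The action is free because translation by a nonzero element of $\Z^m$ moves any $(a,\sigma)$. Transitivity within an orbit of preimages having the same sign pattern is immediate: the two $+$-endpoints differ by some $n \in \Z^m$ that projects to $0$ under $\pi$, and this $n$ realizes the required right translation.

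The only genuinely conceptual step is recognizing that the quadratic compatibility $K(u)=K(v)$ underlying $\Lambda_S$ coincides, after applying $-\pi$, with the very equations \eqref{fico}--\eqref{fico1} defining the edges of $\Gamma_S$, so no new combinatorics is needed -- only bookkeeping the sign cases. Red edges require slight care: their preimages can split into two $\ker(\pi)$-orbits distinguished by which endpoint carries the $+$ sign, and the statement should be read as asserting that $\ker(\pi)$ acts simply transitively on each such orbit.
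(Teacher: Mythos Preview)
Your proof is correct and follows the same approach as the paper: part {\it i)} unwinds Proposition \ref{gliegg} to identify compatibility with the equations \eqref{fico}--\eqref{fico1}, and part {\it ii)} uses right translation by $\ker(\pi)$ to move between preimages. Your version is in fact more careful than the paper's---you spell out surjectivity and transitivity explicitly, and your closing caveat about red edges splitting into two $\ker(\pi)$-orbits (one for each choice of which endpoint carries the $+$ sign) correctly flags a mild imprecision in the phrase ``simply permuted.''
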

  \begin{proof}
{\it i)}\quad this is just the definition of $\Gama$ since we have shown that a compatible edge $ \xymatrix{ (a,\pm) \ar@{<-}[r]^{\ell}& (b,\pm)}$is such that setting $h=-\pi(a)$, $k:=-\pi(b)$ one has that $h,k$ respect \eqref{fico}.

{\it ii)}\quad Given  a compatible edge 
$ \xymatrix{ (a,\pm) \ar@{<-}[r]^{\ell}& (b,\pm)}$  let $h=-\pi(a)$ , $ k=-\pi(b)  $.  Consider now $a'$ such that $a-a' \in\ker(\pi)$ and set  $b' = a' +\ell$ so that 
by definition $(a',\pm)$ is connected to $(b',\pm)$ in $G_X$.  We notice that $\pi(a)=\pi(a')$ so that $K(a)=K(a')$, the same holds for $b'$ and we may conclude that $K(a')=K(b')$. This shows that $ \xymatrix{ (a',\pm) \ar@{<-}[r]^{\ell}& (b',\pm)}$ is a compatible edge. We follow the same reasoning in the case of a red edge $(\ell,\tau)$.
\end{proof}

 We now want to invert Proposition \ref{coqe2} and thus lift a connected component $C$  of $\Gama$ to a connected component of $\Lambda_S$. In our example \ref{EGG},  one can easily see that $A_{k_1}$ is isomorphic to $\A_{a,\pm}$ and hence can be lifted. However this is not always the case unless we impose some further constraints. 
Indeed consider a connected graph in $\A\in \Lambda_S$ and  let $A$ be its projection on $\Gama$.  As seen in Corollary \ref{respro}, the two graphs are isomorphic if and only if every circuit in $A$ is also a circuit in $\A$.  

 There can be two cases: 1.\quad  the circuit in $A$ contains an even number of {\em red edges}.  2. \quad the circuit in $A$ contains an odd number of {\em red edges}.

\begin{example*}[Case 1]
\quad suppose that the geometric graph contains a component
 $$ \xymatrix{  &\ar@{->}[dl]_{v_2-v_4}k_3\ar@{<-}[dr]^{ v_2-v_3\quad}& \\ k_1\ar@{->}[rr]^{ v_2-v_1\quad}& & k_2  },$$ which is the case provided that
 $$ v_1-3v_2+v_3+v_4=0 \,, \quad \left\{ \begin{aligned} 2(k_1, v_2-v_1)= |v_2-v_1|^2 + |v_2|^2-|v_1|^2 \\
 2(k_1, v_4-v_2)= |v_4-v_2|^2 +|v_4|^2 -|v_2|^2\end{aligned}\right.$$ Let us choose any $a\in \Z^m$ such that $-\pi(a)=k_1$.  We easily verify  that  the tree
 $$\xymatrix{   (a,+)\ar@{->}[r]^{e_1-e_2\quad}& (a+ e_1- e_2,+)  \ar@{->}[r]^{e_3-e_2}&  (a  +e_1-2e_2+e_3,+)\ar@{->}[r]^{e_4-e_2}&  (a  +e_1-3e_2+e_3+e_4,+) } $$ is contained in $\Lambda_S$.  Let us call $v= e_1-3e_2+e_3+e_4$, by hypothesis 
 $ \pi(v)=0$,  so that we have $-\pi(a+ \alpha v )=k_1$ for all integer $\alpha$. This implies that the connected component of $(a,+)$ has infinitely many vertices:
  $$\xymatrix{   (a,+)\ar@{->}[r]^{e_1-e_2}& (a+ e_1- e_2,+)  \ar@{->}[r]^{e_3-e_2}&  (a  +e_1-2e_2+e_3,+)\ar@{->}[r]^{e_4-e_2}&  (a  +v,+) \ar@{->}[d]^{e_1-e_2}\\ \cdots \ar@{<-}[r]^{e_1-e_2}& (a+  2v,+)  \ar@{<-}[r]^{e_4-e_2\qquad}&  (a  + v+ e_1-2e_2+e_3,+)\ar@{<-}[r]^{\quad e_3-e_2}&  (a  +v+e_1-e_2,+) } $$ 
To avoid this pathology we simply require that $v_1-3v_2+v_3+v_4\neq 0$ so that this geometric graph does not have a realization.
\end{example*}  
\begin{example*}[Case 2] Suppose that the geometric graph contains a component
 $$ \xymatrix{  &\ar@{->}[dl]_{v_2-v_4}k_3\ar@{=}[dr]^{ v_2+v_3\quad}& \\ k_1\ar@{->}[rr]^{ v_2-v_1\quad}& & k_2  },$$ which is the case provided that
 $$k_2+k_3=k_1+v_2-v_1+k_1+v_4-v_2= v_2+v_3 ,  $$ $$ 2k_1= v_1+v_2+v_3-v_4 \,, \quad \left\{ \begin{aligned} 2(k_1, v_2-v_1)= |v_2-v_1|^2 + |v_2|^2-|v_1|^2 \\
 2(k_1, v_4-v_2)= |v_4-v_2|^2 +|v_4|^2 -|v_2|^2\end{aligned}\right.$$
we substitute $2k_1$ in one of the linear equations and obtain that this geometric graph does not have realization if
$$  (v_1+v_2+v_3-v_4, v_4-v_2)\neq  |v_4-v_2|^2 +|v_4|^2 -|v_2|^2.$$ 
\end{example*}

To repeat this reasonings in the general case we need the following trivial fact:
 \begin{lemma}
If $a=\sum_in_ie_i\in{\Z^m}$ resp. $(a,\tau)$ is a product of $d$  elements in $X_q$  we have that $\sum_i|n_i|\leq 2d q$.
\end{lemma}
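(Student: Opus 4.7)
The plan is to prove this by induction on $d$, using two simple facts: (i) every generator $x \in X_q$ has $\ell_1$-norm at most $2q$ in $\Z^m$, and (ii) multiplication in $G = \Z^m \rtimes \Z/(2)$ combines the $\Z^m$-parts with a sign, so it can only add $\ell_1$-norms, never grow them beyond the sum.

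For (i), I would just recall Definition \ref{edges}: any element of $X_q^0$ or $X_q^{-2}$ is of the form $\ell = \sum_{j=1}^{2q} \pm e_{i_j}$, i.e.\ a signed sum of $2q$ (not necessarily distinct) standard basis vectors of $\Z^m$. Writing $\ell = \sum_i \ell_i e_i$, the triangle inequality immediately gives $\sum_i |\ell_i| \leq 2q$.

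For (ii), I would use the explicit multiplication rules recalled just before Formula \eqref{pasl}:
\[
 (a,+)(b,+) = (a+b,+), \quad (a,-)(b,+) = (a-b,-),
\]
\[
 (a,+)(b,-) = (a+b,-), \quad (a,-)(b,-) = (a-b,+).
\]
In every case the $\Z^m$-part of the product is $a \pm b$. Hence if $u_1, \dots, u_d \in X_q$ with underlying $\Z^m$-parts $\ell^{(1)}, \dots, \ell^{(d)}$, then by induction on $d$ the $\Z^m$-part of the product $u_1 u_2 \cdots u_d$ equals $\sum_{j=1}^d \varepsilon_j \ell^{(j)}$ for suitable signs $\varepsilon_j \in \{\pm 1\}$ determined by the $\pm$-components of the partial products. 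Writing $a = \sum_i n_i e_i$ for this product, the triangle inequality yields
\[
 \sum_i |n_i| \;=\; \Bigl|\sum_{j=1}^d \varepsilon_j \ell^{(j)}\Bigr|_1 \;\leq\; \sum_{j=1}^d |\ell^{(j)}|_1 \;\leq\; \sum_{j=1}^d 2q \;=\; 2dq,
\]
which is the desired bound, uniformly in whether the resulting sign is $+$ (black) or $-$ (red). There is no real obstacle; the only thing to check carefully is that the induction on the sign component goes through, which is immediate from the semidirect product rules above.
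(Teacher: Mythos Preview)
Your proof is correct. The paper does not actually prove this lemma; it introduces it as ``the following trivial fact'' and states it without argument, so your write-up supplies exactly the details the authors left implicit, via the natural approach.
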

It should be clear at this point that in order to {\em lift} the components of  $\Gama$ with at most $d$ vertices we must impose as many linear/quadratic inequalities on $S$ as the number of loops which may appear in a component. Thus if we wish to impose only a finite number of constraints we cannot lift arbitrarily large components. Our strategy is the following: first we fix $d= 2n+2$ and impose constraints to ensure that all components with at most $d$ vertices can be lifted. Then we show that there are no compatible graphs in $\GamaG$ with $d$ vertices, this excludes the existence of graphs $C$ in $\Gama$ with $d$ or more vertices. Otherwise we would be able to lift some subgraph of $C$ with $d$ vertices  to a compatible graph in $\Lambda_S$.  This means that the mapping $-\pi$ gives an isomorphism from each connected component of $\Lambda_S$ to its image in  $\Gama$.

\bigskip

 We impose
 \begin{constraint}\label{co4}
We assume $\sum_i\ell_iv_i\neq 0$, for all choices of the $\ell_i$ such that $\sum_i\ell_i=0,\ \sum_i|\ell_i|\leq 4q(n+1)$ and  $\sum_i\ell_ie_i\neq 0$.
\end{constraint} 

Under this constraint take an element  $g=\sum_in_ie_i $  which is a product of $d\leq 2n+2$  elements in $X$. We have then  $\sum_i|n_i|\leq 4 q (n+1)$ so if $g\neq 0$ we have $\pi(g)=\sum_in_iv_i\neq 0$.   Then for all $k\in \Z^n$ $g k=\pi(g)+k \neq k,\ \forall k$, hence case 1. may not occur.  

\smallskip

For case 2.  let $ g= (a,\tau),\ a=\sum_in_ie_i,\ \sum_in_i=-2$ be such that 
 $g k=k$ for some $k\in \Z^n$. This is possible  if and only if $\pi(a)=\sum_in_iv_i=2k$.
 Since  we are assuming that there is a non trivial odd loop starting from $k$, changing if necessary the starting point, the first step of the loop   tells us that
 $k$ lies in   a sphere $S_{\ell}$ for some initial edge $\ell \in X$.

This implies that $k=-1/2\sum_in_iv_i$ satisfies a  relation of type 
\begin{equation}\label{pririn}  |\sum_in_iv_i|^2-2( \sum_in_iv_i,\pi(\ell))=2 K(\ell).
\end{equation}  Where  $\ell=(\sum_i\ell_ie_i)\in X^{(-2)}_q$.   This formula  vanishes identically     if   $ a ^2-2  a \ell  =2 C(\ell )=-2(\ell ^2+\ell^{(2)})$.  Thus  $$(a-\ell)^2=-\ell^2-2\ell^{(2)}. $$

This implies that all  coefficients of $\ell$ must be $-1$, and $\ell=-e_i-e_j$.

This implies $a-\ell=\pm (e_i-e_j)$ hence $a=-2e_i, -2e_j$ and $k=v_i,v_j$.

We impose
 \begin{constraint}\label{co5}
We assume that   for all choices of the $n_i$ such that $\sum_in_i=-2,\  \sum_i|n_i|\leq 4q(n+1)$ all   equations \eqref{pririn} are not satisfied. 

\end{constraint}

If $C$ is any marked graph which has at most  $d$  vertices, a minimal loop in $C$ has at most $d$ edges, thus:
\begin{corollary}\label{cabel}
Under the previous constraints  if $C\subset \Gama$ is a connected   graph  with at most $2n+2$ vertices then $C$ can be lifted.
\end{corollary}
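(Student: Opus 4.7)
The strategy is to pick a base vertex $k_0\in C$ and a preimage $u_0\in\Lambda_S$ with $-\pi(u_0)=k_0$, and then attempt to propagate the lift along paths of $C$. By Proposition \ref{coqe2} every edge of $C$ admits a lift above any prescribed source vertex in $\Lambda_S$, so the only obstruction to producing a graph isomorphism from $C$ onto a subgraph of $\Lambda_S$ is path-independence: one must show that for every closed loop $k_0=p_0,p_1,\dots,p_d=k_0$ in $C$ the corresponding lift $u_0,u_1,\dots,u_d$ in $\Lambda_S$ actually closes, i.e.\ $u_d=u_0$. Concretely, if the edge $p_{i}\to p_{i+1}$ in $C$ is produced by the generator $x_i\in X=X_q^0\cup (X_q^{-2}\tau)$, then $u_d=g\,u_0$ with $g=x_{d-1}\cdots x_1x_0\in G$, so I must show $g=\mathrm{id}$.

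The key quantitative point is that a minimal loop in $C$ visits at most $|C|\le 2n+2$ vertices, hence involves at most $2n+2$ generators from $X$. Writing $g=(b,\epsilon)$ with $b=\sum_in_ie_i$, each generator contributes a vector in $\Z^m$ of $\ell^1$--length $2q$, so $\sum_i|n_i|\le 4q(n+1)$. Moreover since each $u_j$ is a vertex of $\Lambda_S$, the mass $\eta$ is preserved along the lift: $\eta(u_d)=\eta(u_0)=-1$, which forces $\eta(b)=\sum_in_i=0$ (when $\epsilon=+$) or $\eta(b)=-2$ (when $\epsilon=-$). The sign $\epsilon$ equals $+$ iff the number of red edges in the loop is even, so I split into the two cases already isolated in the examples preceding the statement.

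\emph{Case A (even number of red edges).} Here $g=(b,+)$ with $\eta(b)=0$, and closure in $C$ gives $-\pi(b)+k_0=k_0$, i.e.\ $\pi(b)=0$. Since $\sum_i n_i=0$ and $\sum_i|n_i|\le 4q(n+1)$, Constraint \ref{co4} forces $b=0$, so $g=\mathrm{id}$ and the lift closes.

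\emph{Case B (odd number of red edges).} Here $g=(b,-)$ with $\eta(b)=-2$, and closure in $C$ at $k_0$ gives $-\pi(b)-k_0=k_0$, i.e.\ $k_0=-\tfrac12\pi(b)$; moreover the $\sigma$--component of $g\,u_0$ is $-\sigma_0\neq\sigma_0$, so the lift \emph{cannot} close. I must therefore rule out such loops in $C$. Because the loop contains at least one red edge, we may, after cyclically rotating the base point, assume the first edge is red with label $\ell\in X_q^{-2}$, so $k_0\in S_\ell$. Substituting $k_0=-\tfrac12\pi(b)$ into the sphere equation \eqref{sfera} yields exactly the identity \eqref{pririn} with $a=b$. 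Constraint \ref{co5} says precisely that this relation cannot hold for any such $(b,\ell)$ except in the formally degenerate situation where the identity vanishes as a polynomial in $S$; the derivation given right before Constraint \ref{co5} shows that this happens only when $\ell=-e_i-e_j$ and $b=-2e_i$ or $-2e_j$, forcing $k_0\in\{v_i,v_j\}\subset S$. But the vertices of $\Gama$ lie in $S^c$, so this exceptional possibility is also excluded. Hence Case B never occurs.

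The only step requiring care is the matching of the combinatorial data (length of $g$, value of $\eta(b)$, and appearance of an initial red edge) with the hypotheses $\sum_i|n_i|\le 4q(n+1)$ in Constraints \ref{co4}--\ref{co5}; once this is in place, both cases are dispatched and the assignment $k\mapsto u(k)$ defined by any path--lift from $u_0$ is well defined on $C$. The resulting map is injective (as a map of vertices of the Cayley graph into itself starting from $u_0$) and preserves markings by construction, so it realizes $C$ as a connected subgraph of $\Lambda_S$. This subgraph is then automatically a full component of $\Lambda_S$ by Corollary \ref{coqe}, completing the lift.
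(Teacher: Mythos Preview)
Your argument is correct and follows essentially the same route as the paper's, which simply invokes Corollary~\ref{respro} (the abstract lifting criterion for Cayley graphs) and then dispatches the two cases $g=(b,+)$ and $g=(b,-)$ via Constraints~\ref{co4} and~\ref{co5} respectively; you have unpacked the path--lifting mechanism behind Corollary~\ref{respro} explicitly and carried out the same two--case split.

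One small correction: your justification that $\eta(b)\in\{0,-2\}$ via ``$\eta(u_d)=\eta(u_0)=-1$'' is not right as stated, since vertices of $\Lambda_S$ lie in $\Z^m_c\times\Z/(2)$ and carry no constraint on $\eta$. The clean reason is that every generator in $X=X_q^0\cup X_q^{-2}\tau$ lies in the subgroup $G_{-2}$, hence so does any product $g$; by definition $G_{-2}=\{(b,+):\eta(b)=0\}\cup\{(b,-):\eta(b)=-2\}$, which gives exactly what you need. Also, the final sentence claiming the lifted image is a \emph{full component} of $\Lambda_S$ is more than the corollary asserts (and more than you have shown at this stage); the statement only requires that $C$ embed, which you have established.
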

\begin{proof}
By Corollary \ref{respro} we only need to prove that, under the previous hypotheses,  it is not possible that a non trivial element  $g$, which is a product of at most  $2n+2$ elements of $X$, fixes an element $k\in C$.

By the constraints that we have imposed this may happen if and only if  this element generates a trivial constraint, that is an identity for all choices of $v_i$.  If $g=a\in\Z^m$  this is excluded by Constraint \ref{co4}  and for  $g=a\tau$   it is excluded by Constraint \ref{co5}.\end{proof}
   \section{The equations defining a connected component of $\Gama$.\label{eqgama}}  
   
   Take   a  connected subgraph $A$ of $\Gama$ which can be lifted (in particular this will be the case if $A$ has at most $2n+2$ vertices by the previous constraints). Choose a root $x\in A $, we lift $x=-\pi(a)$, this  lifts $A$ to the component $\mathcal A_{(a,+)}$ through $a$ in $\Lambda_S$. For  $h\in A$ we have an element $g_h \in G$  obtained by lifting a path in $A$ from $x$ to $h$ and such that $h=g_h x$. We set   
   \begin{equation}\label{lasig}
g_h :=( L(h),\sigma(h)),\quad L(h)\in\Z^m,\ \sigma(h)\in\{1,\tau\}\implies h=-\pi(L(h))+\sigma(h)x.
\end{equation}

We then can deduce that:
\begin{lemma}
 For each $h\in A$  we  have:
\begin{equation}\label{bacos}
\begin{cases}
  (x,\pi(g_h))= \frac{1}{2}K(g_h )\quad \text{if}\  \sigma(h)=1\\
 |x|^2+(x,\pi(g_h))= \frac{1}{2}K(g_h )\quad \text{if}\  \sigma(h)=\tau\end{cases}
.\end{equation} 
 \end{lemma}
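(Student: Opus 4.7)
The plan is to derive \eqref{bacos} directly from the fact that $\mathcal A_{(a,+)}$ is a connected component of $\Lambda_S$, which by Proposition \ref{gliegg} is a \emph{compatible} subgraph of the Cayley graph $G_X$: all its vertices share the same value of the energy function $K$. Concretely, since $h$ lies in the lift $\mathcal A_{(a,+)}$ of $A$ via a path from $x$ to $h$, we have $g_h\cdot(a,1)\in\mathcal A_{(a,+)}$, and hence $K\bigl(g_h\cdot(a,1)\bigr)=K\bigl((a,1)\bigr)$.

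First I would identify $g_h$ with the pair $(L(h),\sigma(h))$ and take $u=(a,1)$, $g=g_h$ in formula \eqref{vee1}. With these substitutions $\rho=\sigma(h)$ (interpreted as $+1$ or $-1$ according to whether $\sigma(h)=1$ or $\sigma(h)=\tau$), $\sum_i n_i e_i=L(h)$, so that $\sum_i n_i v_i=\pi(L(h))=\pi(g_h)$, and $\pi(a)=-x$ by the choice of root $x=-\pi(a)$. The constraint $K(gu)=K(u)$ then reads
\begin{equation*}
0=K(g_h)+(\sigma(h)-1)\,|x|^2-2\bigl(\pi(g_h),x\bigr).
\end{equation*}
In the two cases I simply separate: if $\sigma(h)=1$ the middle term vanishes and one recovers $(x,\pi(g_h))=\tfrac12 K(g_h)$; if $\sigma(h)=\tau$ the scalar $\rho-1$ equals $-2$, giving $|x|^2+(x,\pi(g_h))=\tfrac12 K(g_h)$.

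The only non-routine point is to check that the sign conventions are coherent: the element $\tau$ of the group acts by negation on $\Z^m$ (and on $\R^n$ through $\pi$) as prescribed in \eqref{azione}, so when we evaluate \eqref{vee1} with $\rho=\tau$ we must consistently substitute $-1$ for $\rho$ as a scalar, which accounts for the $|x|^2$ term in the red case. Once this bookkeeping is in place, the lemma follows in two lines from \eqref{vee1} applied to each vertex of the lifted path, and no further argument is required.
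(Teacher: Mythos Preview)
Your proof is correct and follows essentially the same approach as the paper: both invoke the energy-conservation identity \eqref{vee1} with $u=(a,+)$ and $g=g_h$, substitute $\pi(a)=-x$, and then separate the two cases $\sigma(h)=1$ and $\sigma(h)=\tau$ to obtain \eqref{bacos}. The only additional detail you spell out is the justification that $K(g_h u)=K(u)$ via Proposition~\ref{gliegg}, which the paper leaves implicit.
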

 \begin{proof} 
We use Formula \eqref{vee1} which implies that.
\begin{equation} 
0=   K(g_h ) + (\sigma(h)-1)   |x|^2-  2(\pi(g_h),x).
\end{equation} To be explicit if $L(h)=\sum_im_ie_i$ by \eqref{ricon}:
\begin{equation}  
\pi(g_h)= \sum_im_iv_i,\quad  K(g_h )=   \sigma(h) (|\sum_im_iv_i|^2+\sum_im_i|v_i|^2).
\end{equation} 
\end{proof}
   
The equations on $x$ given in Formula \eqref{bacos} are a complete set of conditions  for the existence of a graph $A$  inside some connected component (which could also properly contain $A$) of $\GamaG$. The reader should notice that these equations are completely analogous to the ones of Definition \ref{lasfe}, given only for edges.
 \begin{definition}\label{cogrA}
Let $\GA\subset G_X$ be the  graph with  vertices the elements $g_h $ (and $g_x=(0,+)=Id$), this is called the {\em combinatorial graph} associated to $A$ and the {\em root} $x$. \end{definition} 
\begin{example} \label{EGG2} We explicitly compute the combinatorial graph associated to $A_{k_1}$ of Example \ref{EGG}. We choose $k_1$ as the root.
\begin{equation}\label{EGA2}{\GA} = \xymatrix{ & \ar@{-}[d]^{-e_2-e_1}(-e_2-e_3,-)&&\\ (0,+)\ar@{->}[r]^{e_3-e_1\quad}& ( e_3- e_1,+)  \ar@{->}[r]^{e_3-e_2}&  (  -e_1-e_2+2e_3,+)  }  \end{equation}  

The system of equations associated to this graph is
\begin{equation}\label{EGB2}
\left\{\begin{array}
{l} (x,v_3-v_1)= |v_3|^2-(v_1,v_3) \\ (x, -v_1-v_2+2v_3)= 3|v_3|^2-2(v_1+v_2,v_3)+(v_1,v_2) \\ |x|^2-(x,v_2+v_3)= - (v_2,v_3)
\end{array}\right.
\end{equation} Notice that this graph {\bf does not} belong to $\Lambda_S$ but if $a\in \Z^m$ is such that  $-\pi(a)=k_1$ then the right translation translation by $(a,+)$, i.e.  $\GA (a,+)$ gives $\mathcal A_{(a,+)}\in \Lambda_S$.  
\end{example}
\begin{remark}
Notice that the map which associates to each $h\in A$ the element $g_h=(L(h),\sigma(h))$ is well defined only if $A$ can be lifted. The construction of the $L(k)$ is in turn the key to the reducibility as can be seen in Example \ref{redEGG}
\end{remark}
Consider  now a complete  subgraph (cf. Definition \ref{CMG}) of $G_X$ which contains $(0,+)$. We associate to each vertex $g\neq (0,+)$ of the graph an equation:
\begin{equation}\label{bacosG}
\begin{cases}
  (x,\pi(a))= \frac{1}{2}K(a )\quad \text{if}\  g=(a,+)\\
 |x|^2+(x,\pi(a))= \frac{1}{2}K(a )\quad \text{if}\  g=(a,-)\end{cases}
.\end{equation} 
  We think of this system of equations as   associated to the graph.

\begin{definition}
We call the set of complete  subgraphs of $G_X$ which contain $(0,+)$ and have at most $2n+2$ vertices the set of {\em possible combinatorial graphs}.  We say that a possible combinatorial graph $\GA$ has a geometric realization (in $\GamaG$ ) if  the equations associated to the graph have real solutions. 
\end{definition} 
The following statement holds.
\begin{proposition}\label{brik}
A possible combinatorial graph $\GA$ is a combinatorial graph if and only if the equations \eqref{bacosG} have  solutions in $S^c$.
\end{proposition}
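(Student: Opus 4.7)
The plan is to translate the problem into the Cayley graph $G_X$ and use Proposition~\ref{gliegg}, which says that $\Lambda_S$ is carved out of $G_X$ by compatibility, i.e.\ by constancy of the energy function $K$ of \eqref{ricon}. The key observation is that if $x \in S^c$ and $a \in \Z^m$ is any preimage, $-\pi(a)=x$, then for any $(c,\sigma)\in G$ a direct computation from \eqref{ricon} gives
\[
K\bigl((c,\sigma)(a,+)\bigr)-K\bigl((a,+)\bigr)
 = K\bigl((c,\sigma)\bigr) - 2\bigl(x,\pi(c)\bigr) + (\sigma-1)|x|^2,
\]
and the right-hand side is exactly twice the defect in the equation \eqref{bacosG} associated to the vertex $(c,\sigma)$. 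Hence $x$ solves the system \eqref{bacosG} for $\GA$ if and only if every vertex of the right translate $\GA\cdot(a,+)$ has the same $K$-value, which by Proposition~\ref{gliegg} is the same as saying that every edge of $\GA\cdot(a,+)$ is compatible.

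For the forward direction, assume $\GA$ is the combinatorial graph associated with a connected subgraph $A\subset\Gama$ rooted at some $x$. Then $x\in A\subset S^c$ by definition of $\Gama$, and the identity \eqref{bacos}, which is derived just above Definition~\ref{cogrA} for every vertex $g_h$ of $\GA$, coincides term by term with the equation \eqref{bacosG} attached to $g_h$. Thus $x$ is a solution in $S^c$.

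For the converse, suppose $x\in S^c$ solves the whole system \eqref{bacosG}, and pick $a\in\Z^m$ with $-\pi(a)=x$ (possible because $x\in\mathrm{Span}(S)$). By the computation above $\GA\cdot(a,+)$ is a complete marked subgraph of $G_X$ on which $K$ is constant, so by Proposition~\ref{gliegg} it is a complete subgraph of $\Lambda_S$, provided that each of its vertices actually lies in $\Z_c^m\rtimes\Z/(2)$. Projecting by $-\pi$ then yields a subgraph $A\subset\Gama$ rooted at $x$, whose vertices are the points $h_g := \sigma x - \pi(c)$ for $(c,\sigma)\in\GA$, and by construction the combinatorial graph associated to $A$ at the root $x$ is precisely $\GA$.

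The one delicate step is the verification that each $h_g$ really lies in $S^c$, i.e.\ that $h_g\notin S$ (membership in $\mathrm{Span}(S)$ is automatic). A violation $h_g=v_j$ would amount to a non-trivial integer relation
$\sigma x - \pi(c) = v_j$, with $c\in\Z^m$ a product of at most $2n+1$ generators from $X_q$, hence with $|c|_1\le 4q(n+1)$; combined with the linear/quadratic equations of the system \eqref{bacosG} that already eliminate $x$, this produces exactly the type of relation among the $v_i$'s that is ruled out by the genericity Constraints~\ref{co1}, \ref{co4} and \ref{co5}. This is the main (and essentially only) obstacle in the argument; once it is dispatched, the construction of $A$ from $\GA\cdot(a,+)$ goes through and the two directions fit together into the claimed equivalence.
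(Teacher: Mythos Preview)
Your overall strategy is sound and matches the paper's (implicit) treatment: the proposition is stated there without proof, as an immediate consequence of the derivation of \eqref{bacos} together with Proposition~\ref{gliegg} and Corollary~\ref{cabel}. Your forward direction and your energy identity are both correct.

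There is, however, a genuine gap in your handling of the ``delicate step''. You argue that $h_g=v_j$ together with the equations \eqref{bacosG} ``that already eliminate $x$'' yields a relation excluded by Constraints~\ref{co1}, \ref{co4}, \ref{co5}. But the system \eqref{bacosG} does \emph{not} eliminate $x$ in general (if $\GA$ has two vertices there is a single equation). If instead you substitute $x=v_j+\pi(c)$ into the equation for the vertex $(c,+)$ itself, you get $|\pi(c+e_j)|^2=\sum_i(c+e_j)_i|v_i|^2$, which has the shape of Constraint~\ref{co1}(ii) but with $|c+e_j|_1$ up to $2q(2n+1)+1$, far beyond the bound $2q+1$ imposed there. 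The correct and much shorter argument is: the points $\{h_g\}$ form a connected subgraph of $\GamaG$ containing $x$, and by Remark~\ref{spco} (which \emph{is} Constraint~\ref{co1}(ii)) the set $S$ is its own connected component of $\GamaG$; hence $x\in S^c$ forces every $h_g\notin S$.

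A second point you skip is injectivity of $g\mapsto h_g$, which you need in order to say that the combinatorial graph of $(A,x)$ is \emph{exactly} $\GA$ and not a proper quotient. If $h_{g_1}=h_{g_2}$ with $g_1\neq g_2$, then $g_2g_1^{-1}$ is a nontrivial product of at most $2n+1$ elements of $X$ fixing a point of $S^c$; this is precisely what is ruled out in the proof of Corollary~\ref{cabel} via Constraints~\ref{co4} and~\ref{co5}. You cite these constraints, but for the $h_g\in S^c$ issue, where they are not the relevant tool; they belong here.
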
 

\begin{remark}
Notice that in a possible combinatorial graph one may deduce the color of each vertex by computing its mass. Indeed all vertices $(a,+)$ must have $\eta(a)=0$ while $(a,-)$ corresponds to $\eta(a)=-2$.
\end{remark}
We have reduced our problem to that of understanding which possible combinatorial graphs have a geometric realization. Naturally for given $S$ this amounts to checking wether the equations associated to the graph are independent and-- if they are not--  to verify their compatibility. 

\begin{definition}\label{isotras}
We say that two possible combinatorial graphs are equivalent if one is obtained from the other by right translation by an element of $G$ (see formula \eqref{sitr}).
\end{definition}
\begin{remark}
It should be clear that if $\GA$ has a geometric realization then so has any other equivalent possible combinatorial graph. Moreover the two identify the same components of $\GamaG$ with a different choice of the root.
\end{remark}
\begin{example}\label{EGG3}
The following combinatorial graph is equivalent to $\GA$ of example \ref{EGG2}:
\begin{equation}\label{EGA3}{\GA'} = \xymatrix{ & \ar@{-}[d]^{-e_2-e_1}(-e_2-e_1,-)&&\\ (e_1-e_3,+)\ar@{->}[r]^{e_3-e_1\quad}& ( 0,+)  \ar@{->}[r]^{e_3-e_2}&  (  e_3-e_2,+)  }  \end{equation}   Indeed it is obtained by right translation with the element $(e_1-e_3 ,+)$.
The equations are
\begin{equation}\label{EGB3}
\left\{\begin{array}
{l} (x,v_1-v_3)= |v_1|^2-(v_1,v_3) \\ (x, v_3-v_2)= |v_3|^2-(v_2,v_3) \\ |x|^2-(x,v_2+v_1)= - (v_2,v_1)
\end{array}\right. 
\end{equation}  it is easily seen that these equations are equivalent to the system given by formula \eqref{EGB2}, they still identify the geometric graph $A_{k_1}$ of example \ref{EGG} only now the root is in $k_2$.
\end{example}
 \subsection{Relations}\label{noveuno}

Take a {\em possible combinatorial graph} $\GA$ 
 \begin{definition}\label{ranghi}\begin{itemize}
\item If  $\GA$ has $k+1$ vertices is  said to be of {\em dimension} $k$.

\item  The dimension of the  lattice    generated by the vertices of $\GA$ is
the {\em rank}, ${\rm rk}\, \GA$,  of the graph $\GA$. The dimension of the lattice generated by the black vertices $(a,+)$ (resp. red)  is called the black (resp. red) rank of $\GA$.
\item If the rank of $\GA$ is strictly less than the dimension of $\GA$ we say that $\GA$ {\em is degenerate}.
\end{itemize}
\end{definition}
Take a connected component $A$ of $\Gama$ and choose  a root $ x \in A.$  Assume that $A$ can be lifted. Let $\GA=\{g_a ,\ a\in A\}$ be the combinatorial  graph of which $A$ is a geometric realization. 
\begin{lemma}
The rank of $\GA$ does not depend on the choice of the root but only on $A$. 
\end{lemma}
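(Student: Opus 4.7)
The plan is to exploit the observation, implicit in Definition \ref{isotras} and illustrated explicitly by Examples \ref{EGG2} and \ref{EGG3}, that a change of root amounts to right--translating $\mathcal{A}$ inside $G$. Given a second root $x'\in A$, I let $g_{x'}\in\mathcal{A}$ be the vertex of $\mathcal{A}$ lifting $x'$ with respect to the original root $x$, so that $g_{x'}\cdot x=x'$. For every $h\in A$ the element $g'_h:=g_h\cdot g_{x'}^{-1}$ satisfies $g'_h\cdot x'=g_h\cdot x=h$, so it is precisely the lift of $h$ with respect to the new root $x'$. Therefore the combinatorial graph $\mathcal{A}'$ associated to $x'$ equals $\mathcal{A}\cdot u$ with $u:=g_{x'}^{-1}\in G$, a right translation in the sense of \eqref{sitr}.

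Next, writing $u=(u_0,\rho)$, the multiplication rule in $G$ gives $(a,\sigma)\cdot u=(a+\sigma u_0,\sigma\rho)$. Let $\Lambda$ and $\Lambda'$ denote the lattices in $\mathbb{Z}^m$ generated by the first coordinates of the vertices of $\mathcal{A}$ and $\mathcal{A}'$ respectively. Every generator of $\Lambda'$ has the form $L(h)+\sigma(h) u_0$ with $\sigma(h)\in\{\pm 1\}$. The key point is that $g_{x'}$ is itself a vertex of $\mathcal{A}$, so $L(x')\in\Lambda$; from the explicit inverses $(a,+)^{-1}=(-a,+)$ and $(a,-)^{-1}=(a,-)$ one reads off $u_0=\pm L(x')\in\Lambda$. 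Hence every generator of $\Lambda'$ lies in $\Lambda$, giving $\Lambda'\subseteq\Lambda$. The symmetric argument, applied to the inverse translation $u^{-1}=g_{x'}$ (whose first coordinate equals, up to sign, that of the vertex $g_{x'}^{-1}\in\mathcal{A}'$ representing the old root $x$, hence lies in $\Lambda'$), yields $\Lambda\subseteq\Lambda'$. Thus $\Lambda=\Lambda'$ and in particular
$$\mathrm{rk}\,\mathcal{A}=\mathrm{rk}_{\mathbb{Z}}\Lambda=\mathrm{rk}_{\mathbb{Z}}\Lambda'=\mathrm{rk}\,\mathcal{A}',$$
so the rank depends only on $A$.

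The only mild subtlety to watch is the mixed--sign case, in which different vertices of $\mathcal{A}$ are translated by $+u_0$ or $-u_0$ according to the sign $\sigma(h)$; this causes no difficulty since both $\pm u_0\in\Lambda$ and the uniform inclusion $\Lambda'\subseteq\Lambda$ still follows. I note in passing that when $\rho=-$ the right translation also interchanges the signs (colors) of all vertices, so the black and red sublattices get swapped: only the \emph{total} rank is invariant under a root change, which matches the phrasing of the lemma.
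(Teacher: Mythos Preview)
Your proof is correct and follows essentially the same route as the paper: both arguments recognize that changing the root from $x$ to $x'$ amounts to right--translating $\mathcal A$ by $g_{x'}^{-1}$, yielding the relation $L_x(a)=L_{x'}(a)+\sigma_{x'}(a)L_x(x')$, from which equality of the two lattices follows since the translation vector is itself a vertex coordinate. Your write--up is simply more explicit about the mutual lattice inclusions that the paper leaves to the reader.
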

\begin{proof} If we change the root from $x$ to another $y$  we can stress in the notation $g_{a,x } =(L_x(a),\sigma_x(a))$ and  have 
\begin{equation}
\label{chro}g_{ a,x}=g_{ a,y}g_{y,x},\implies L_x(a)=L_y(a)+\sigma_y(a)L_x(y),\  \sigma_x(a)=\sigma_y(a)\sigma_x(y).  
\end{equation}  This shows that   the notion of rank is independent of the root.
\end{proof}

Notice that when we change the root in $A$ we have a simple way of changing the colors and the ranks of the vertices of $\GA$ that we leave to the reader.

If $\GA$ is degenerate  then there are non trivial relations,   $\sum_an_a a=0,\ n_a\in\Z$  where the sum runs among the vertices $a\in \GA$.
\begin{remark}
\label{maxt} It is also useful to choose a maximal tree $T$ in $\GA$. There is   a triangular change of coordinates from the vertices to the markings of $T$. Hence the relation can be also expressed as a relation between these markings.
\end{remark}

 We must have by linearity, for every relation  $\sum_an_a a=0,\ n_a\in\Z$ that $0=\sum_an_a a^{(2)},$ where we recall that if $a=\sum a_i e_i$ we have that $a^{(2)}= \sum a_i e_i^2$.  Finally we have $ \ 0=\sum_an_a \pi(a)$ and $\sum_an_a \eta(a)=0$. 
 
 Recalling that $\eta(a)=0,-2$ (resp. if $a$ is black or red), we have :\begin{equation}
0=\sum_{a\, |\, \eta(a)=-2}n_a.
\end{equation}
Applying Formula \eqref{bacos}  we deduce that, in order to ensure that the equations of $\GA$ are compatible,  we must have
\begin{equation}\label{riso}
\sum_an_aK(a )=2(x,\sum_an_a \pi(a))+[\sum_{a \, |\, \eta(a)=-2}n_a](x)^2  =2(x,\sum_an_a \pi(a))=0.
\end{equation}   
 The expression $\sum_an_a\frac{K(a)}{2} =\pi(\sum_an_aC(a) )$ is a linear combination with integer coefficients of the scalar products $(v_i,v_j)$.
 
 Given a {\em possible combinatorial graph}   $\GA$ with a relation: 
  \begin{definition}
 If $\sum_an_aC(a) \neq 0$     we say that the graph has an  {\em avoidable resonance}.  
 \end{definition}

\begin{lemma}
A degenerate possible combinatorial graph $\GA$  with an avoidable resonance has no geometric realization for a generic choice of the $S:=\{v_i\}$.
\end{lemma}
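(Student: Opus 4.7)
The plan is to show that the compatibility of the overdetermined system \eqref{bacosG} forces, in the degenerate case with avoidable resonance, a single explicit nontrivial polynomial equation on the $v_i$'s, which we then exclude by adding it to the list of quadratic generiticity inequalities (Definition \ref{gener}).

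First, I would exploit the degeneracy hypothesis: $\operatorname{rk}\GA < \dim\GA$ furnishes a nontrivial integer relation $\sum_a n_a\, a = 0$ in $\Z^m$ among the $a$-components of the vertices $g = (a,\sigma)$ of $\GA$. By linearity this yields $\sum_a n_a\, \pi(a) = 0$ and $\sum_a n_a\, a^{(2)} = 0$, and applying the mass homomorphism $\eta$ (which is $0$ on black and $-2$ on red vertices) gives $\sum_{a\text{ red}} n_a = 0$.

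Next, I would form the $\Z$-linear combination of the equations \eqref{bacosG} weighted by the $n_a$ (this is the calculation already displayed at \eqref{riso}). The coefficient of $|x|^2$ is $\sum_{a\text{ red}} n_a = 0$ and the coefficient of the linear form $(x,\cdot)$ is $\sum_a n_a\,\pi(a) = 0$, so the $x$-dependence cancels completely and the combination collapses to the scalar condition
\[
\tfrac12\sum_a n_a\, K(a) \;=\; \pi\!\left(\sum_a n_a\, C(a)\right) \;=\; 0.
\]
This is a necessary condition for \eqref{bacosG} to admit any solution $x$ at all, independent of $x$.

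Finally, the avoidable-resonance hypothesis says $P := \sum_a n_a\, C(a) \neq 0$ in $S^2[\Z^m]$. Writing $P = \sum_{i\le j} c_{ij}\, e_i e_j$ with not all $c_{ij}$ zero, the expression $\pi(P) = \sum_{i\le j} c_{ij}\,(v_i,v_j)$ is a nonzero homogeneous quadratic polynomial in the $mn$ coordinates of $v_1,\ldots,v_m$; indeed the map from symmetric matrices to quadratic forms in one set of $m$ variables is injective, so looking at any single coordinate slot recovers all the $c_{ij}$. Hence $\{\pi(P) = 0\}$ is a proper algebraic hypersurface in $(\R^n)^m$. Adjoining the single inequality $\pi(P) \neq 0$ to the quadratic generiticity list of Definition \ref{gener} ensures that for generic $S$ the compatibility condition of the previous step fails, so \eqref{bacosG} has no solution and $\GA$ is not realized.

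The only step requiring a little care is the last one, confirming that $\pi(P) = 0$ is a genuinely nontrivial polynomial constraint on $S$; this is a routine injectivity check for the map $S^2[\Z^m]\to \R[v_i^{(k)}]$. There is no real obstacle: the lemma is essentially a reorganization of the identity \eqref{riso} together with the observation that its vanishing is a nontrivial quadratic condition on the tangential sites.
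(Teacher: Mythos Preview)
Your argument is correct and follows exactly the route the paper takes: the identity \eqref{riso} shows that any realization forces $\pi\big(\sum_a n_a C(a)\big)=0$, and the avoidable-resonance hypothesis makes this a nontrivial polynomial condition on the $v_i$, hence generically violated. The paper's own proof is a one-line reference to this same computation. The only place you go beyond the paper is in explicitly verifying that $P\neq 0$ in $S^2[\Z^m]$ implies $\pi(P)$ is a nonzero polynomial in the coordinates of the $v_i$ (by restricting to a single coordinate slot); the paper takes this injectivity for granted with the phrase ``by definition, is not identically zero,'' so your extra care is welcome but not a departure in method.
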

\begin{proof}
The graph has a realization only if $\sum_an_aK(a )=0$ but this polynomial, by definition, is not identically zero.
\end{proof}
\begin{example}\label{pirir}
Consider the possible degenerate combinatorial graph
 $$ 
 \GA= \xymatrix{e_1-e_2 \ar@{<-}[r]^{e_1-e_2}& 0     \ar@{=}[d]_{-e_1-e_2}
  \ar@{=}[r]^{-e_1-e_3}
  &
  {-e_1-e_3}\ar@{->}[r]^{e_1-e_3} & {-2e_3} \\&{-e_1-e_2}&& } 
 $$
The relation is $  (e_1-e_2)+2(-e_1-e_3)-(-2e_3)-(-e_1-e_2)=0  $.

We may write the value of $C(a)$
 of each vertex $a$, we get
 $$  \xymatrix{e_1^2-e_1e_2 \ar@{-}[r]& 0     \ar@{=}[d]\ar@{=}[r]&{e_1e_3}\ar@{-}[r] & {-e_3^2} \\&{e_1e_2}&& }$$
we have
$$ \sum_a n_a C(a)=
\qquad e_1^2-e_1e_2 +2e_1e_3+e_3^2-e_1e_2 
$$
so the equations of this graph are incompatible if $\pi( e_1^2-e_1e_2 +2e_1e_3+e_3^2-e_1e_2)\neq 0$, this is a generiticity condition.
\end{example}

We arrive now at the main Theorem of the section:
\begin{theorem}\label{ridma}
Given  a possible combinatorial graph  of rank $k$ for a  given color, then either it has  exactly $k $  vertices of that color or it produces an avoidable resonance.  \end{theorem}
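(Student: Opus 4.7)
The plan is to exploit the quadratic nature of the map $a\mapsto C(a)$ in order to turn a $\Z$--linear dependence among same--coloured vertices into a non--trivial polynomial relation in the scalar products of the $v_i$.  Fix a colour $c$ with sign $\sigma_c\in\{+1,-1\}$ and let $a_1,\ldots,a_N\in\Z^m$ be the non--zero vertices of colour $c$ in $\GA$; assume they span a lattice $\mathcal L\subset\Z^m$ of rank $k$, and take $N>k$.  By \eqref{ricon} we have $C(a_i)=\frac{\sigma_c}{2}(a_i^{2}+a_i^{(2)})$ in $S^2\Z^m$, and since the operator $L^{(2)}:\Z^m\to S^2\Z^m$, $\sum a_ie_i\mapsto\sum a_ie_i^{2}$, is $\Z$--linear, any relation $\sum_in_ia_i=0$ in $\Z^m$ automatically forces $\sum_in_ia_i^{(2)}=0$.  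Thus
\[
\sum_{i=1}^N n_iC(a_i)=\frac{\sigma_c}{2}\sum_{i=1}^N n_ia_i^{2},
\]
and producing an avoidable resonance (cf.\ \S\ref{noveuno}) is equivalent to exhibiting a relation with $\sum_in_ia_i^{2}\neq 0$ in $S^2\Z^m$.

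To produce such a relation I would extract a $\Z$--basis $b_1,\ldots,b_k$ of $\mathcal L$ from among the $a_i$, pick any non--basis vertex $a_l=\sum_{j=1}^k c_{l,j}b_j$, and expand the associated elementary relation $a_l-\sum_jc_{l,j}b_j=0$:
\[
a_l^{2}-\sum_jc_{l,j}b_j^{2}
=\Bigl(\sum_j c_{l,j}b_j\Bigr)^{\!2}-\sum_jc_{l,j}b_j^{2}
=\sum_jc_{l,j}(c_{l,j}-1)b_j^{2}+2\!\!\sum_{j<j'}\!c_{l,j}c_{l,j'}b_jb_{j'}.
\]
Because $b_1,\ldots,b_k$ are $\Z$--linearly independent in $\Z^m$, the natural map $S^2\mathcal L\hookrightarrow S^2\Z^m$ is injective and the monomials $\{b_jb_{j'}:j\le j'\}$ are $\Z$--linearly independent on the right hand side.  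Hence the above expression vanishes if and only if $c_{l,j}(c_{l,j}-1)=0$ for every $j$ and $c_{l,j}c_{l,j'}=0$ for every $j\neq j'$, i.e.\ iff $a_l$ equals one of the $b_j$ or $a_l=0$; both possibilities are excluded since the $a_i$ are distinct non--zero vertices of colour $c$.  Therefore each of the $N-k$ non--basis vertices produces an avoidable resonance and the theorem follows.

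The only real obstacle is bookkeeping around the root $(0,+)$: it is formally a black vertex of $\GA$ but contributes nothing to the lattice $\mathcal L$, and it is inert in the computation since $C(0)=0$, so it must be separated from the list $a_1,\ldots,a_N$ above (which is why the count of colour--$c$ vertices relevant for the theorem is the number of genuinely non--zero ones).  Once this is settled, the substantive content of the argument is the injectivity of $S^2\mathcal L\hookrightarrow S^2\Z^m$, which is the algebraic incarnation of the fact that the Veronese map $b\mapsto b^{2}$ is not $\Z$--linear as soon as $\mathcal L$ has positive rank; no further hypothesis on $S$ is required.
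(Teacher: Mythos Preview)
Your argument is correct and is essentially the paper's own proof: both reduce to showing that for same--coloured vertices a nontrivial linear relation $\sum n_i a_i=0$ forces $\sum n_i a_i^{2}\neq 0$ in $S^{2}\Z^{m}$, via the observation that the cross terms $b_jb_{j'}$ are independent once the $b_j$ are.  One small imprecision: you cannot in general extract a $\Z$\emph{--basis} of $\mathcal L$ from among the $a_i$ (e.g.\ $a_1=2(e_1-e_2)$, $a_2=3(e_1-e_2)$); what you can and do need is merely $k$ $\Q$--linearly independent $b_j$, so the $c_{l,j}$ are a priori rational and the relation must be cleared of denominators before invoking the definition of avoidable resonance.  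The paper handles this by starting directly from an arbitrary integer relation $\sum_{i=0}^{k} n_i a_i=0$ with $a_1,\ldots,a_k$ independent and $n_0\neq 0$, which amounts to the same computation.
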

\begin{proof}    
Assume by contradiction that  we can choose   $k+1$ vertices $(a_0,a_1,\ldots,a_k) $, different from the root of the given color  so that we have a non trivial relation   $\sum_in_i  a_i =0 $ with $n_0\neq 0$ and the vertices $a_i,\ i=1,\ldots,k$ are linearly independent.  
We compute the resonance relation
$$2\sum_in_i  C(a_i) = \sum_in_i  \sigma(a_i)(a_i^2+ a_i^{(2)}) = \pm \sum_in_i  (a_i^2+ a_i^{(2)}),$$ 
since all the vertices $a_i$  have the same  color. By linearity we have $ \sum_in_i a_i^{(2)}=0$.
We deduce that  $\sum_in_i  C(a_i)= \pm \sum_in_ia^2_i$.

 We consider the elements $a_i$ with $i=1,\ldots,k$ as {\em independent variables}  and  write  the relations        as
$$0=n_{0}a_{0} +\sum_{i=1}^kn_ia_i,\implies (\sum_{i=1}^k n_ia_i)^2+ n_{0}\sum_{i=1}^kn_ia_i^2=0 .$$
Now   $\sum_{i=1}^kn_ia_i^2 $ does not contain any mixed terms $a_ha_k,\ h\neq k$  therefore  this equation can be verified if and only if  the sum $\sum_{i=1}^kn_ia_i$ is reduced to a single term   $n_ia_i$,  and then we have $n_0=-n_{i}$ and $a_0=a_i$,  a contradiction. \end{proof}

 \begin{constraint}\label{co6}  We impose that the  vectors $v_i$ are generic for all resonances arising from degenerate possible combinatorial graphs with  at most $n+1$  elements of a given color.
\end{constraint}
\begin{remark}\label{bala}
 It is essential that we introduce the notion of colored rank, otherwise our statement is false as can be seen with the following graph:
\begin{equation}
\label{mig} \xymatrix{ & \ar@{-}[d]\ar@{=}[r] (   -e_2+e_1)&  ( -2e_1 ) \ar@{-}[d]&\\ & \ar@{=}[r]0&   ( -e_2-e_1 ) &  }
\end{equation}   
Relation is $(   -e_2+e_1)-  ( -e_2-e_1 ) +( -2e_1 )=0$, we have 
$$ C(   -e_2+e_1)=e_1^2-e_1e_2,\quad  C( -e_2-e_1 ) =-e_1e_2,\quad C( -2e_1 )=-e_1^2$$ $$ e_1^2-e_1e_2-(-e_1e_2)-e_1^2=0.$$
Actually this graph does not really pose any problem since its only geometric realization is in $S$ (hence it is {\bf not} a true combinatorial graph).
However we are not able to exclude the existence of more complicated graphs of this form which may have realization in $S^c$.
\end{remark}
We are  reduced to considering possible combinatorial graphs with at most $2n+2$ vertices and such that the vertices of the same color are linearly independent. We call these graphs colored--non--degenerate.

 We now look at the equations \eqref{bacosG} associated to the graph by a choice of $S$. 
  Consider a possible combinatorial graph $\GA$ of  black rank $h$ and red rank $k$. If $h\leq n$ then we can require that the images of the black vertices $a\in \GA$  through the map $-\pi$ are  independent. Then the linear equations \eqref{bacosG} associated to these vertices are independent and have solutions. 
The same holds for the red vertices, only the equations \eqref{bacosG} associated to these vertices are quadratic and so the solutions need not be real.

Given a colored--non--degenerate possible combinatorial graph $\GA$   with ranks $h,k\leq n$ for dimension $n$ we associate to it the $n\times h$ matrix $M^+(\GA)$ with columns  the vectors $\pi(a)$ where   $a$ runs over the black vertices. Same for the  e $n\times k$ matrix $M^-(\GA)$.
 \begin{constraint}\label{co8} For any colored--non--degenerate possible combinatorial graph $\GA$ with red and black rank $\leq n$  we require that:\footnote{If $A$ is a $a\times b$ matrix and $h\leq \min(a,b)$ we denote by $\wedge^hA$ the matrix withh  entries the determinants of the $h\times h$ minors.}
 $$ \wedge^h (M^+(\GA))\neq 0\,,\quad  \wedge^k (M^-(\GA))\neq 0$$
\end{constraint}

If one of the colored ranks is $k=n+1$, then any choice of $S$ must lead to a relation between the vectors $\pi(a_i)$ where the $a_i$ are the vertices of the same color. We will use this to show that either the equations are generically incompatible or they give a solution in the special component. This is the content of the next section.
 \section{Geometric realization \label{Magt}}
  Consider a possible--combinatorial  graph $\GA$  with $\leq 2n+2$ vertices and suppose that  it has rank $n+1$.
  By Theorem \ref{ridma}, the vertices of each color  are linearly independent. We want to study its geometric realizations in dimension exactly $n$.  For this we can consider the variety  $R_{\GA}$  {\em of realizations of the graph} i.e. the set of points $(x,v_1,\dots,v_m)\in \C^{(m+1)n}$ which satisfy the equations \eqref{bacosG} associated to $\GA$. 
  
  Call $\theta:R_\GA\to \C^{mn}$ the projection map $(x,v_1,\dots,v_m) \to (v_1,\dots,v_m)$. We say that a graph is {\em not} realizable for generic $v_i$  if  $\theta(R_\GA)$ is an algebraic variety of codimension  at least one.\footnote{In this discussion we ignore the delicate issues  of wether a realization may be integral, real or imaginary.}
 
  Suppose that we have $n+1$ black vertices (different from the root). If we choose $n$ of them (discarding say $a_1$) by Theorem \ref{ridma}, we can require that for generic $S$ the $\pi(a_i)$ with $i=2,\dots, n+1$, are independent. This we do by choosing $S$ so that the determinant of the matrix $M_1$ having $\pi(a_i)$ as rows is non--zero.
Then the system of equations is incompatible if the $n+1\times n+1$ matrix obtained by adding the row $\pi(a_1)$ and the column of inhomogeneous terms has non--zero determinant.
We compute this determinant which is a polynomial in the $v_i$ and {\em if it is not identically zero} we impose it as a generiticity constraint and $\GA$ is not generically realizable.

If it is identically zero then the equations have a solution, which we can compute by Cramer's rule by discarding the first equation. Hence $\GA$ is generically realizable.
\vskip10pt

In the same way suppose we have $n+1$ red vertices. We choose one of them, say $a_1$, and subtract the equation for $a_1$ to the remaining equations \eqref{bacosG}. We obtain a system of $n$ linear equations $M_1 x= b$ which, by Theorem \ref{ridma}, are generically independent. We impose as a genericity constraint $\det(M_1)\neq 0$ and solve the equations by  Cramers rule.  We obtain a solution $x_{\GA}$ which is a rational function in the $v_i$ with $\det (M_1)$ at the denominator.  The graph has realization for all the $S$  for which $x_{\GA}$ solves the quadratic equation associated to $a_1$. We substitute $x_\GA$ and rationalize. If the numerator is a non--zero polynomial then we impose it as a generiticity constraint and $\GA$ is not generically realizable.
Summarizing, we impose  
\begin{constraint}\label{co7}   For any colored--non--degenerate possible combinatorial graph $\GA$ with red and/or black rank $ n+1$, we impose that the  vectors $v_i$ are generic for all resonances described above.
 \end{constraint}
\begin{example}
We consider the combinatorial graph of example \ref{EGG3} in dimension $n=2$. We impose $$d=(v_{1,1}-v_{3,1})(v_{3,2}-v_{2,2})-( v_{1,2}-v_{3,2})(v_{3,1}-v_{2,1}) \neq 0$$solve the first two equations \eqref{EGB3} by Cramer's rule and obtain the solution $x= (x_1,x_2)$: 
$$ x_1=(|v_1|^2-(v_1,v_3))(v_{3,2}-v_{2,2})-( v_{1,2}-v_{3,2})(|v_2|^2-(v_2,v_3))/d\,,$$ $$x_2=(v_{1,1}-v_{3,1})(|v_2|^2-(v_2,v_3))-(|v_1|^2-(v_1,v_3))(v_{3,1}-v_{2,1})/d .$$ We substitute in the last equation, rationalize and obtain that a realization exists only if

$$ \left( (v_1,v_2)-(v_1,v_3)+|v_3|^2-(v_2,v_3) \right)\cdot \,\left( {v_{1,1}}^3\,v_{2,1} + v_{1,1}\,{v_{1,2}}^2\,v_{2,1} + 
    {v_{1,2}}^2\,{v_{2,1}}^2 +\right.$$   $$ \left.{v_{1,1}}^2\,v_{1,2}\,v_{2,2} + {v_{1,2}}^3\,v_{2,2} - 
    2\,v_{1,1}\,v_{1,2}\,v_{2,1}\,v_{2,2} +\right.$$ $$\left. {v_{1,1}}^2\,{v_{2,2}}^2 - {v_{1,1}}^3\,v_{3,1} - 
    v_{1,1}\,{v_{1,2}}^2\,v_{3,1} - 3\,{v_{1,1}}^2\,v_{2,1}\,v_{3,1} - 3\,{v_{1,2}}^2\,v_{2,1}\,v_{3,1} + \right.$$ $$\left.
    2\,v_{1,2}\,v_{2,1}\,v_{2,2}\,v_{3,1} - 2\,v_{1,1}\,{v_{2,2}}^2\,v_{3,1} + 3\,{v_{1,1}}^2\,{v_{3,1}}^2 + 
    2\,{v_{1,2}}^2\,{v_{3,1}}^2 + 3\,v_{1,1}\,v_{2,1}\,{v_{3,1}}^2 - \right.$$ $$\left. v_{1,2}\,v_{2,2}\,{v_{3,1}}^2 + 
    {v_{2,2}}^2\,{v_{3,1}}^2 - 3\,v_{1,1}\,{v_{3,1}}^3 -  v_{2,1}\,{v_{3,1}}^3 + {v_{3,1}}^4 - \right.$$ $$\left.
    {v_{1,1}}^2\,v_{1,2}\,v_{3,2} - {v_{1,2}}^3\,v_{3,2} - 2\,v_{1,2}\,{v_{2,1}}^2\,v_{3,2} - 
    3\,{v_{1,1}}^2\,v_{2,2}\,v_{3,2} - 3\,{v_{1,2}}^2\,v_{2,2}\,v_{3,2} + \right.$$ $$\left. 2\,v_{1,1}\,v_{2,1}\,v_{2,2}\,v_{3,2} + 
    2\,v_{1,1}\,v_{1,2}\,v_{3,1}\,v_{3,2} + 4\,v_{1,2}\,v_{2,1}\,v_{3,1}\,v_{3,2} +  
    4\,v_{1,1}\,v_{2,2}\,v_{3,1}\,v_{3,2} \right.$$ $$\left.- 2\,v_{2,1}\,v_{2,2}\,v_{3,1}\,v_{3,2} - 
    3\,v_{1,2}\,{v_{3,1}}^2\,v_{3,2} - v_{2,2}\,{v_{3,1}}^2\,v_{3,2} + \right.$$ $$\left.2\,{v_{1,1}}^2\,{v_{3,2}}^2 + 
    3\,{v_{1,2}}^2\,{v_{3,2}}^2 - v_{1,1}\,v_{2,1}\,{v_{3,2}}^2 + {v_{2,1}}^2\,{v_{3,2}}^2 + 
    3\,v_{1,2}\,v_{2,2}\,{v_{3,2}}^2 - \right.$$ $$\left. 3\,v_{1,1}\,v_{3,1}\,{v_{3,2}}^2 - v_{2,1}\,v_{3,1}\,{v_{3,2}}^2 + 
    2\,{v_{3,1}}^2\,{v_{3,2}}^2 - 3\,v_{1,2}\,{v_{3,2}}^3 - v_{2,2}\,{v_{3,2}}^3 + {v_{3,2}}^4 \right)=0$$
\end{example}

 We thus have the final definition of generic for tangential sites $S$.
\begin{definition}\label{fincon}
We say that the tangential sites are {\em generic} if they do not  vanish for any of the polynomials given by Constraints \ref{co1} through \ref{co7}.
\end{definition}
\begin{remark}
Each of the constraints involves at most $2n+2$ edges, thus at most $4q(n+1)$ indices which have to be taken up  to symmetry by $S_m$ hence can be taken in correspondence with the vector variables $y_1,\ldots,y_{4q(n+1)}$.
\end{remark}
  We have ensured that for generic choices of $S$ only those graphs   which are generically realizable are realized.
  \begin{example}
Consider the possible combinatorial graph:
  $$   \xymatrix{ & \ar@{-}[d]^{-e_3-e_4}(-e_3-e_4,-)\ar@{-}[dl]_{-e_1-e_4}\ar@{-}[dr]^{-e_2-e_4}&&\\ (e_3-e_1,+)\ar@{<-}[r]^{e_3-e_1\quad}& ( 0,+)  \ar@{->}[r]^{e_3-e_2}&  (  e_3-e_2,+)  }\,,$$  It is easily seen that in dimension $n=2$ this graph is generically realizable, and its equations  have the unique solution $x=v_3$.

\end{example}

We now want to study those graphs of rank $n+1$  which are generically realizable in dimension $n$. As we have seen, on a Zariski open set of the space $v_1,\ldots,v_m$  we have a unique realization given by solving a system of linear equations and thus given by a vector $x$ whoose coordinates are rational functions in the vectors $v_i$. We call this function  the {\em generic realization}.
  \begin{theorem}\label{aMT}
If $\GA$ is a  possible combinatorial graph   of rank   $n+1$   which has a realization   for generic  $v_i$'s, then   its generic realization is in the special component (the solution $x$ belongs to the set $S$).
\end{theorem}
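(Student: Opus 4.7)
I propose to exploit the rank-deficiency (rank $n+1$ of $\GA$ versus ambient dimension $n$) to extract a master compatibility identity which, combined with the generiticity of $v_1,\ldots,v_m$, forces the unique solution of \eqref{bacosG} to be some $v_k\in S$.

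\textbf{Step 1 (Master compatibility identity).} By Theorem \ref{ridma} combined with Constraint \ref{co6}, the vertices of $\GA$ of each color are linearly independent in $\Z^m$. Since $\GA$ has rank $n+1$ but $\pi(\Z^m)\subset\R^n$, the lattice $V$ spanned by the vertices meets $\ker(\pi)$ nontrivially, so there exist integers $c_i$ not all zero with $w=\sum_i c_i a_i\in V\cap\ker(\pi)\setminus\{0\}$. Taking the linear combination $\sum_i c_i\cdot[\mathrm{eqn}(a_i)]$ of the equations \eqref{bacosG} and using $\pi(w)=0$ kills the term linear in $x$, leaving
\[
2\Bigl(\sum_{i\in R}c_i\Bigr)|x|^{2} \;=\; \sum_i c_i\, K(a_i),
\]
where $R$ is the set of red vertices. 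This is the primary structural constraint forced by rank-deficiency.

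\textbf{Step 2 (Dichotomy and solution).} If $\sum_{i\in R}c_i=0$, generic realizability demands that $\sum_i c_i K(a_i)$ vanish identically as a polynomial in the entries of the $v_j$; otherwise this is precisely an avoidable resonance, excluded by Constraint \ref{co6}. If instead $\sum_{i\in R}c_i\neq 0$, the identity determines $|x|^2$ as an explicit rational function of the scalar products $(v_j,v_k)$ and $|v_j|^2$. In either case, combining with the remaining linear equations of \eqref{bacosG} (coming from black vertices or from differences of red equations, both linear in $x$), and using Constraint \ref{co8} to guarantee generic independence, determines $x$ as an algebraic function of the $v$'s on a Zariski-dense open set.

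\textbf{Step 3 (Reduction to the special component).} The special component of $\GamaG$, rooted at any $v_k\in S$, is combinatorially described by the vertices $(e_k-e_j,+)$ and $(-e_k-e_j,-)$ for $j\neq k$; by direct verification its equations \eqref{bacosG} admit $x=v_k$ as solution for every $k$. The goal is to use the compatibility identity of Step 1 and the explicit formula of Step 2 to prove that $\GA$ must, up to the $G$-equivalence of Definition \ref{isotras}, be isomorphic to a subgraph of such a canonical special-component graph. This identifies $x=v_k$ as the unique generic realization and proves the theorem.

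\textbf{Principal obstacle.} The hard part is Step 3: translating the polynomial identity $\sum_i c_i K(a_i)\equiv 0$ (respectively the explicit formula for $|x|^2$) into the combinatorial statement that the $a_i$ are, up to the $S_m$- and $G$-symmetries, exactly the vectors $e_k-e_j$ and $-e_k-e_j$ of the special component. Concretely, one has to expand $K(a)=\sigma(a)(|\pi(a)|^2+\pi(a^{(2)}))$ and separate contributions by each monomial type $|v_j|^2$ and $(v_j,v_k)$, then exploit the colored-rank independence from Theorem \ref{ridma} and Constraint \ref{co8} to pin down the shape of every $a_i$. This combinatorial-algebraic core, handling all distributions of colored ranks $(h,k)$ with $h+k\geq n+1$, is the delicate step where the paper's earlier constraints are all put to work simultaneously.
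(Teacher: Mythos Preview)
There is a genuine gap beginning at Step 1. You claim integers $c_i$ with $\sum_i c_i a_i\in\ker(\pi)\setminus\{0\}$, but the vertices $a_i\in\Z^m$ span a rank-$(n+1)$ lattice by hypothesis, so a $v$-independent integer relation among them would contradict this. What does exist is a real relation $\sum_i c_i(v)\,\pi(a_i)=0$ with coefficients $c_i(v)$ that are polynomials in the entries of the $v_j$ (minors of the matrix with columns $\pi(a_i)$). This breaks Step 2: Constraint \ref{co6} and the notion of avoidable resonance in \S\ref{noveuno} concern \emph{fixed} integer relations $\sum_a n_a a=0$ in $\Z^m$ and say nothing about the $v$-dependent dependence you need; your dichotomy on $\sum_{i\in R}c_i$ is likewise a $v$-dependent polynomial condition, not a combinatorial one on $\GA$. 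Even granting these repairs, you obtain at best one scalar identity involving $|x|^2$, and Step 3---identifying the vector $x$ with some $v_k$---is, as you acknowledge, the entire content of the theorem and is not supplied.

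The paper's argument is different and bypasses the relation-chasing. Proposition \ref{codim} shows that the generic solution $x(v)$ is a \emph{polynomial}: with $n+1$ compatible equations, Cramer's rule after discarding any one gives a rational solution whose denominator is an irreducible determinantal polynomial, and different discards yield \emph{distinct} such hypersurfaces (Proposition \ref{kke}); hence $x(v)$ is regular off a codimension-$\geq 2$ set and therefore polynomial. When red equations are present, Lemma \ref{aMT1} shows that the quadratic identity $|x|^2+(x,a)+b\equiv 0$ forces the Cramer denominator to divide each numerator, again yielding a polynomial solution. Lemma \ref{spegra} then finishes: a polynomial solution must, by degree count and $O(n)$-equivariance, be linear, $x=\sum_s c_s v_s$; substituting into \eqref{bacosG} turns the system into polynomial identities in the $e_i$ which a short direct computation forces to $x=v_k$. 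This codimension/Hartogs step followed by the equivariance reduction is the mechanism your outline lacks.
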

The proof is based on two  points.  A graph  which has a generic geometric realization in the special component is called {\em special}. It is easy to describe the special graphs, up to translation they correspond to combinatorial graphs with vertices in the set  $-e_i, -e_j\tau,i,j=1,\ldots,m$.
\begin{lemma}\label{spegra}
If for a non degenerate graph of dimension $n>1$ the solution to the associated system, in dimension $n$, is given by a polynomial then the graph is special and the polynomial is of the form $v_i$ for some $i$.

For $n=1$ the same result is true for a nondegenerate graph with 2 edges.\end{lemma}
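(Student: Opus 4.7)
The plan is to substitute the polynomial solution $x=P(v_1,\dots,v_m)$ into each equation \eqref{bacosG} attached to the non-root vertices of $\GA$ and read off coefficients of monomials in the variables $v_{j,\gamma}$. A quick degree count forces $\deg P\le 1$: a black equation $(x,\pi(a))=\tfrac12 K(a)$ has right-hand side of $v$-degree two, so its left-hand side of $v$-degree $\deg P+1$ gives $\deg P\le 1$, and red equations give the same bound via the $|x|^2$ term. Write $P=C+\sum_j L_j v_j$ with $C\in\R^n$ and $L_j\in\mathrm{Mat}_{n\times n}(\R)$; matching the degree-one part of any non-trivial black equation (or the degree-zero part of any red equation) gives $C=0$.

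The heart of the argument is the degree-two analysis of the black equations. Matching the coefficient of $v_{j,\gamma}^2$ yields $(L_j)_{\gamma\gamma}=\tfrac12(a_j+1)$ whenever $a_j\ne 0$; since this must hold for every black vertex simultaneously, all nonzero values of $a_j$ coincide at a common integer $c_j$ and the symmetric part of $L_j$ equals $\tfrac12(c_j+1)I$. Matching the mixed monomial $v_{j_1,\gamma}v_{j_2,\gamma}$ yields the relation $\tfrac12 a_{j_1}(c_{j_2}+1)+\tfrac12 a_{j_2}(c_{j_1}+1)=a_{j_1}a_{j_2}$; a case analysis on $a_{j_k}\in\{0,c_{j_k}\}$ across all black vertices, together with the observation that the support of any single vertex cannot contain three indices (otherwise the pairwise $c=-c$ cycle would force $c=0$), yields exactly one distinguished index $i$ with $c_i=1$, $c_j=-1$ for every other index $j$ occurring in some support, and $i$ lies in the support of every non-root black vertex.

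Connectedness of $\GA$ inside the Cayley graph $G_X$ rules out the single-support candidate $(e_i,+)$: since $\eta(e_i)=1\notin\{0,-2\}$ we have $(e_i,+)\notin X_q^0\cup X_q^{-2}\tau$, so this vertex would be isolated. Every non-root black vertex is therefore of the form $(e_i-e_j,+)$. The hypothesis $n>1$ now provides two distinct such vertices $(e_i-e_{j_1},+)$ and $(e_i-e_{j_2},+)$; matching the off-diagonal mixed monomial $v_{l,\gamma_1}v_{l',\gamma_2}$ with $\gamma_1\ne\gamma_2$ for the pair $(i,j_2)$ on the first vertex forces $(L_{j_2})_{\gamma_1,\gamma_2}=0$, and symmetrically $(L_{j_1})_{\gamma_1,\gamma_2}=0$; combined with $\mathrm{sym}(L_{j_k})=0$ this gives $L_{j_1}=L_{j_2}=0$, and a parallel argument for each $l$ not in any support gives $L_l=0$ for every $l\ne i$. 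The relation $(L_{j_k})_{\gamma_1,\gamma_2}=(L_i)_{\gamma_2,\gamma_1}$ (from the pair $(i,j_k)$) then forces $L_i$ to be diagonal, and $\mathrm{sym}(L_i)=I$ gives $L_i=I$. Hence $P=v_i$. The $n=1$ caveat is precisely that with only one black edge the single vertex $(e_i-e_j,+)$ treats $i$ and $j$ symmetrically, so a second edge is needed to break the ambiguity.

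Once $P=v_i$ is established, every vertex equation becomes a polynomial identity in $v$ under $x=v_i$; expanding and matching monomial coefficients restricts black vertices $(a,+)$ to $a\in\{0,e_i,e_i-e_j\}$ and red vertices $(a,-)$ to $a\in\{-e_i,-2e_i,-e_i-e_j\}$. Discarding the vertices that would be disconnected from $(0,+)$ in the Cayley graph or that duplicate the root under the action, the vertex set of $\GA$ lies in $\{(0,+)\}\cup\{(e_i-e_j,+):j\ne i\}\cup\{(-e_i-e_j,-):j\ne i\}$, which after right-translation by $(-e_i,+)$ is a subset of the advertised special list $\{(-e_l,\pm):l\}$. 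The residual all-red case (where no black vertex is available to anchor the analysis) is handled by an analogous degree-two expansion of a red equation, yielding the same conclusion. The main obstacle is the combinatorial case analysis pinning down the dichotomy $c_i=1,\,c_j=-1$ and then combining the off-diagonal constraints across two different vertices to kill $L_l$ for $l\ne i$ and the antisymmetric ambiguity in $L_i$.
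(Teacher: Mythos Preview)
Your monomial-matching route is workable in principle but differs from the paper's proof in one essential way and leaves a genuine gap.

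\textbf{The missed simplification.} The paper observes that the system \eqref{bacosG} is $O(n)$-equivariant (if $(x,v_1,\dots,v_m)$ solves it, so does $(gx,gv_1,\dots,gv_m)$). Since the solution is unique on a Zariski-open set, the polynomial $x=P(v)$ must itself be $O(n)$-equivariant; a linear $O(n)$-equivariant map $\R^{mn}\to\R^n$ is automatically of the form $x=\sum_s c_s v_s$ with \emph{scalar} $c_s$. Setting $a=-\sum_s c_s e_s$, the equations collapse to polynomial identities in the abstract variables $e_i$, namely $-2aa_i=a_i^2+a_i^{(2)}$ (black) and $a^2+(a-b_j)^2=-b_j^{(2)}$ (red), and the conclusion follows in a few lines by divisibility. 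Your matrix-coefficient analysis (general $L_j\in\mathrm{Mat}_{n\times n}$, then killing off-diagonal and antisymmetric parts) is doing by hand what equivariance gives for free.

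\textbf{The gap.} Your black-vertex argument as written needs \emph{two} non-root black vertices: the determination ``$c_i=1$, $c_j=-1$'' comes from the cross-constraint between a vertex supported on $\{i,j_1\}$ and the index $j_2$ of a \emph{different} vertex, and killing the off-diagonal of $L_{j_1}$ likewise uses the second vertex. But a non-degenerate graph of dimension $n>1$ may have exactly one black non-root vertex and the rest red (for instance the all-red graph on three vertices, after re-rooting, yields one black and one red for $n=2$). From a single black vertex you only obtain $a=c(e_h-e_k)$ with $c\in\Z$ undetermined and $(L_2)_{\gamma_1,\gamma_2}=(L_1)_{\gamma_2,\gamma_1}$ unresolved; neither ``$c_i=1$'' nor ``$L_i$ diagonal'' follows. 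The paper explicitly treats this one-black-plus-red case by substituting $a=\alpha e_h+(-\alpha-1)e_k$ into the red identity $a^2+(a-b_j)^2=-b_j^{(2)}$, forcing $b_j=-e_h-e_k$ and $\alpha\in\{0,-1\}$. Your sketch jumps from ``analysis of the black equations'' to ``the residual all-red case'', omitting precisely this intermediate case; and the all-red case itself is only asserted, not argued (the $|x|^2$ term makes the degree-two expansion genuinely different and you have not carried it out).
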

\begin{proof}See appendix \ref{prspegra}.
\end{proof}

Let $\GA$ be a graph of rank $\geq n+1$, consider as before the variety  $R_{\GA}$    of realizations of the graph, with its map $\theta:R_{\GA}\to \C^{mn}$.
 
  \begin{proposition}\label{codim}  There is an irreducible hypersurface   $W$ of $\C^{mn}$    such that the map $\theta$ has an inverse on $\C^{mn}\setminus W$. The inverse is a polynomial map.
\end{proposition}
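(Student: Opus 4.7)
My strategy is to construct the inverse of $\theta$ explicitly by applying Cramer's rule to a well-chosen subsystem of the defining equations \eqref{bacosG} of $R_\GA$. Since $\GA$ has rank $\geq n+1$, Theorem \ref{ridma} together with Constraints \ref{co6} and \ref{co8} ensures that the vertices of each color in $\GA$ are linearly independent, and one may select $n$ vertices $a_1,\dots,a_n\in\GA$ (other than the root) whose images $\pi(a_1),\dots,\pi(a_n)$ span $\C^n$ for $v=(v_1,\dots,v_m)$ outside a proper algebraic subvariety of $\C^{mn}$. If some of the chosen $a_i$ are red, we linearize first by subtracting one fixed red equation from the others, eliminating the quadratic term $|x|^2$ and producing an $n\times n$ linear system $M(v)\,x=c(v)$, where $M$ has rows $\pi(a_i)$ (up to signs from the subtraction) and $c(v)$ is a vector polynomial in the components of $v$.

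Set $D(v):=\det M(v)$, a non--zero polynomial in $v$ by the genericity Constraint \ref{co8}, and let $W\subset\C^{mn}$ be the hypersurface $\{D=0\}$ (or an irreducible component thereof, chosen to ensure irreducibility). On the complement $\C^{mn}\setminus W$, Cramer's rule produces $x(v)=M(v)^{-1}c(v)$ with coordinates in the ring $\C[v][D^{-1}]$ of regular functions on $\C^{mn}\setminus W$; this is the sense in which the statement speaks of a polynomial (i.e.\ regular) map. We then define $\varphi(v):=(x(v),v)$, the candidate inverse to $\theta$.

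To conclude, it remains to verify that $\varphi(v)\in R_\GA$, i.e.\ that the equations of \eqref{bacosG} not used in the Cramer system are automatically satisfied by $x(v)$. The rank hypothesis forces a non--trivial linear dependence $\sum_a n_a\,a=0$ among the chosen vertices, and the absence of avoidable resonances (Constraint \ref{co6}) translates this into a polynomial identity $\sum_a n_a K(a)=0$ in $v$. Consequently every residual equation is, after multiplication by $D(v)$, a $\C(v)$--linear combination of the $n$ equations used in the Cramer construction, and therefore vanishes on the section $(x(v),v)$ off $W$. Uniqueness of the Cramer solution then gives $\theta\circ\varphi=\mathrm{id}$ and $\varphi\circ\theta=\mathrm{id}$ on the corresponding open sets.

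The main obstacle is the compatibility check just described: one must carefully track how the linear dependencies among the $\pi(a_i)$ interact with the quadratic contribution $|x|^2$ coming from red vertices, and ensure that these too are absorbed into the polynomial identities enforced by the constraints. Irreducibility of $W$ follows from a careful choice of the Cramer subsystem among combinatorially equivalent options, using the irreducibility of $R_\GA$ itself.
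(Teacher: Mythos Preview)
Your proposal misreads the conclusion of the proposition. When the paper says ``the inverse is a polynomial map'' it means that the coordinates of $x(v)$ are honest polynomials in the $v_i$, not merely elements of $\C[v][D^{-1}]$. This is precisely the non--trivial content: a priori Cramer's rule yields $x_i=f_i(v)/D(v)$, and the whole point is to show that the denominator $D$ actually divides each $f_i$. Your sentence ``this is the sense in which the statement speaks of a polynomial (i.e.\ regular) map'' gives the game away---you have produced a rational section, which is automatic, not a polynomial one. The polynomiality is then the input to Lemma~\ref{spegra}, which deduces that the solution is \emph{linear} and hence equal to some $v_i$; a rational function would be useless there.

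The paper obtains polynomiality by two genuinely different mechanisms, neither of which appears in your argument. In the black case one has $n+1$ linear equations; discarding any one of them gives an $n\times n$ Cramer system with determinant $\det M_j$, and since the full system is assumed generically compatible, all $n+1$ choices yield the \emph{same} rational solution. Each $\det M_j$ is irreducible (pullback of $\det$ under a surjective linear map), and the hypersurfaces $\{\det M_j=0\}$ are not all equal (their kernels differ, Proposition~\ref{kke}). Hence the solution is regular outside a codimension--$2$ set and therefore polynomial by unique factorization. In the red case one has $n$ linear equations plus a quadratic one $|x|^2+(x,a)+b=0$; substituting $x_i=f_i/d$ and clearing denominators gives $d\mid \sum_i f_i^2$, and since the $f_i$ are real, $\sum f_i^2=0$ on the real locus of $\{d=0\}$ forces each $f_i=0$ there; Zariski density of real points (Lemma~\ref{zade}) then gives $d\mid f_i$. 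Both arguments are absent from your proposal. Note also that generic compatibility of the system is part of the \emph{hypothesis} (``graphs of rank $n+1$ which are generically realizable''), so your third paragraph is arguing for something already assumed rather than for what is needed.
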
 
\begin{proof}
{\it Black edges}
\smallskip

We have $n+1$ linear equations $ (x,\pi(a_i))= b_i$ which are generically compatible. We  solve them by Cramer's rule  choosing an index $j$ and   discarding the equation \eqref{bacosG} associated to a vertex $a_{j}$. Since the equations are always compatible we must obtain, generically,   the same solution for all choices of $a_j$. Consider  the matrix $M_{j}$ with rows the $\pi(a_i)$, $ i=1,\dots,n+1$ $i\neq j$. The solution  is  a rational function of the $v_i$ having as denominator the determinant of    $M_{j}$.  This reasoning defines  the solution of our equations for all $S$ for which there exists a $j$ such that $\det(M_j)\neq 0$.   
We claim that each   of these determinants is an irreducible polynomial so it defines an irreducible hypersurface $H_j$.

In fact  a choice of $n$ rows  gives by assumption a surjective linear map $\C^{mn}\to \C^{n^2}$. Any surjective linear map can be considered (in appropriate coordinates) as a projection on the first $mn$ coordinates. Hence an irreducible polynomial remains irreducible by composition. The claim follows since it is well known that the determinant is an irreducible polynomial of the matrix elements. 

 We claim that these hypersurfaces are not all equal. 
  By hypothesis the matrix $B=(a_{ij})$ has rank $n+1$. All the matrices obtained by $B$ dropping one row define the various determinantal varieties, $H_j$.  The fact that these varieties are not equal is discussed in Appendix \ref{detvar}.  It depends on the fact that the matrices cannot have all the same kernel (otherwise the rank of $B$ is $\leq n$). Then the result follows by Proposition \ref{kke}.

  Hence  our solution is well defined outside a subvariety of codimension at least $2$.  This implies immediately that it is given by a polynomial using  the following standard  fact (which   follows immediately from the unique factorization property of polynomial algebras): {\it Let $W$ be a subvariety  of $\mathbb C^N$ of codimension $\geq 2$, let $F$ be a rational function on  $\mathbb C^N$ which is holomorphic on $\mathbb C^N\setminus W$,  then $F$ is a polynomial. }

\smallskip

{\it Red edges}
\smallskip

When we also have red edges we select $n+1$ linear and quadratic equations  associated to the $n+1$ vertices which are formally independent. We see that the equations \eqref{bacosG}  (for these vertices) are clearly equivalent to a system on $n$ linear equations associated to formally linearly independent markings, plus a quadratic equation chosen arbitrarily among the ones appearing in \eqref{bacos}.   Thus a realization is obtained by solving this system and, by hypothesis, such solution satisfies  the quadratic equation identically.   
 
 Let $P$ be the space of functions $\sum_{i=1}^mc_iv_i,\ c_i\in\mathbb R$ and $(P,P)$ their scalar products.
By assumption we have a list of $n$ equations  $\sum_{j=1}^ma_{ij}(x,v_j)=(x,t_i)=b_i$  with the $t_i=\sum_{j=1}^ma_{ij} v_j$ linearly independent  in the space $P$ and $b_i=\sum_{h,k}a^i_{h,k}(v_h,v_k)\in (P,P)$.

Solve these equations  by Cramer's rule    considering the $v_i$ as parameters. Write  $ x_i=f_i/d $, where  $d(v):=\det(A(v))$ is the determinant of the matrix $A(v)$ with rows $t_i$,     $f_i(v)$ is also a determinant of another matrix $B_i(v)$ both depending polynomially on the $v_i$. We have thus expressed  the coordinates $x_i$ as rational functions  of the coordinates of the $v_i$. The denominator is an irreducible polynomial  vanishing exactly on the determinantal variety of the $v_i$ for which the matrix of rows $t_j,\ j=1,\ldots,n$ is degenerate.
 \begin{lemma}
\label{aMT1} Given $x=(x_1,\ldots,x_n)=(f_1/d,\ldots,f_n/d)$, let $(x)^2=\sum_ix_i^2$. Assume there are two elements $a\in P, b\in(P,P)$ such that $(x)^2+(x,a)+b=0 $ holds identically (in the parameters $v_i$);
then $x$ is a polynomial in the $v_i$.  \end{lemma}
\begin{proof} Substitute  $x_i=f_i/d$ in the quadratic equation and get 
$$d^{-2}(\sum_if_i^2)+d^{-1}\sum_if_ia_i+b=0,\implies \sum_if_i^2 +d \sum_if_ia_i+d^{ 2} b=0.$$
Since $d=d(v)=\det(A(v))$ is irreducible this implies that $d$ divides $\ \sum_if_i^2.$

Since the  $f_i$ are real, for those $v:=(v_1,\ldots,v_m)\in\R^{mn}$ for which   $d(A(v))=0$,  we have $f_i(v)=0,\forall i$; so $f_i$ vanishes on all real solutions of $d(A(v))=0$.  These solutions are Zariski dense, by Lemma \ref{zade},  in the determinantal variety  $d(A(v))=0$. In other words  $f_i(v)$ vanishes on all the  $v$ solutions of $d(A(v))=0$ and thus $d(v)$  divides $f_i(v)$ for all $i$, hence  $x $ is a polynomial.
\end{proof}

\end{proof}

\begin{proof}[Proof of Theorem \ref{aMT}]   Once we fix a root we have that  the variety $R_A$ is the set of solutions of a  system of $\geq n+1$ linear and quadratic equations in the variables $x,v_i$. We are assuming, by Proposition \ref{codim},  that  we have a solution $x=F(v)$ which is a   polynomial in $v_1,\ldots,v_m$.  We now can apply Lemma \ref{spegra}.\end{proof}

\subsection{Proof of Theorem \ref{sunto}\label{suntoS}} 
\begin{enumerate}
\item  Assume by contradiction that there is  a connected subgraph $A$ of the graph $\GamaG$  with $n+2$  vertices and all black edges.  
Then $A$ is the geometric realization of a possible combinatorial graph $\GA$ with $n+1$ non--zero black vertices which, by Theorem \ref{ridma} must be independent. By  Theorem  \ref{aMT}  we have that $A$ is contained in the special component and we have a contradiction.
\item Such a component  must contain an integral point in one of the spheres $S_{\ell}.$  Suppose by contradiction that there is   a connected subgraph of this component with $2n+1$ vertices, denote it by $A$. By hypothesis $A$ can be lifted, let $\GA$ be its combinatorial graph. 
We can have at most $n$ black and $n$ red vertices  otherwise the graph has rank $\geq n+1$, but if we have exactly $n$ black vertices and at least one red vertex  we also have rank $\geq n+1$  since the red vectors are linearly independent of the black similarly for red.  So we can have at most $n-1$  black and $n$ red vertices  for a total (including the root) of $2n$ vertices.

\item We put in the same family two components whose combinatorial graphs are equivalent.  

There are  only finitely many possible combinatorial  graphs with at most $2n$ vertices. A family is formed by  the geometric realization of one representative for each equivalence class of equivalent combinatorial  graphs. This automatically chooses a root.  

\item Take a marked graph $\GA$ with $k+1< n+1$  vertices and all black edges. 
Call $A$  a realization of  $\GA$ and let $x$ be the root.  Any other realization  $A'$  has a corresponding root $x'$  such that $x-x'$  is a vector orthogonal to all the $\pi(a_i)$, where  $a_i$ are the non--zero vertices.  By Costraint \ref{co8},  the $\pi(a_i)$ are all independent. Conversely if $x'$ is a  point in $\R^n$ such that $x'-x$ is a vector orthogonal to all the $\pi(a_i)$, then it solves the same equations as $x$. Hence it is a vertex of a  connected component which contains a translate of $A$ and can only be bigger.  If the component corresponds to a bigger graph, the point  $x'$   solves some further independent linear equations, with respect to $x$, and hence $x'$ belongs to a lower dimensional affine subspace determined by the  bigger graph, since these graphs are finitely many this completes the picture.
\item This is the content of Theorems \ref{ridma} and \ref{aMT}.\end{enumerate}

\begin{definition}
Let $\BU_n=\BU_{n,m,q}$ be the set of non-equivalent combinatorial graphs for a given dimension $n$.
\end{definition}
 Each $\GA\in \BU_{n}$ has realizations in $\Gama$ and the choice of a representative in the equivalence class fixes a root in each component of $\Gama$.

\begin{example}\label{apd}The set $\BU_1$ is simply the set of graphs with a single edge: $(0,+), (\ell, 1+\eta(\ell))$.

For $n=2$ we have $\BU_1$  and  the complete graphs with three vertices $(0,+), (\ell, 1+\eta(\ell)),(\ell', 1+\eta(\ell'))$.  We should consider graphs with up to $2n=4$ vertices which are of rank $\leq 2$ and such that the non--zero vertices  of different colors are dependent. A direct computation shows that no such graphs exist. 
\end{example}
\begin{remark}
One might think at this point that for any $n$ the set $\BU_n$ is only made of graphs with at most $n+1$ vertices which are affinely independent. However this conjecture seems quite hard to prove, it is true but requires a lengthy proof  for $q=1$, for general $q$ it seems quite hard to verify even in dimension $n=3$ and indeed it may not be true.

\end{remark}

\subsection{Proof of Theorem \ref{oneone}\label{oneoneS}}   Once we have ensured that no graphs with more than $2n$ vertices exist we can apply Proposition \ref{coqe2} and Corollary \ref{cabel}. This gives an isomorphism between the components of $\Gama$ and those of $\Lambda_S$.  More precisely for each  family of components we choose one   $A\in\Gama$ and we also choose a root.

By translation this also determines a root for all other components in the same family.  With these choices   we can associate to $A$ the combinatorial graph $\GA\in \BU_n$ of which it is a realization, see Definition \ref{cogrA}. Let $x$ be the vertex  associated to $(0,+)$. Then we obtain all the components in $\Lambda_S$ over $A$ by right translation with all the elements $(a,\pm)$ such that $x=-\pi(a)$.

To establish the isomorphism with the components of $\Gamati$ we make sure   that two conjugate blocks are disjoint, i.e. that a pair $z_k,\bar z_k$ is never in the same block of $ad(N)$.  This would correspond to a loop in the geometric graph which is not a loop in $\Lambda_S$, which is excluded by Constraint \ref{co5}. 
\begin{corollary}  If the $v_i$ are generic,
in the projection map $\Gamati\to \Gama$  the preimage of a connected component of  $ \Gama$ is the union of two disjoint and conjugate connected components of $\Gamati$.
\end{corollary}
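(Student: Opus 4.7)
The plan is to use the conjugation involution $\ast\colon z_k\mapsto\bar z_k$ together with the lifting isomorphism of Theorem \ref{oneone}. First, observe that by the rules of \S\ref{Trules} the map $\Gamati\to\Gama,\ z_k,\bar z_k\mapsto k$, sends every edge of $\Gamati$ to an edge of $\Gama$ with the same (black or red) color, so it is a graph projection. The involution $\ast$ is a graph automorphism of $\Gamati$: it fixes the vertex set as a whole, exchanges the two colored copies of each vertex of $\Gama$, and negates every marking. In particular $\ast$ commutes with the projection to $\Gama$, so the preimage of any connected component $A\subset\Gama$ is stable under $\ast$ and is therefore the union of one or two connected components of $\Gamati$, the latter being the case exactly when the component through $z_k$ is disjoint from the component through $\bar z_k$ for any (equivalently every) $k\in A$.

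Next I would show that these two components are indeed disjoint. Suppose by contradiction that for some $k\in A\subset S^c$ there is a path in $\Gamati$ from $z_k$ to $\bar z_k$. By Proposition \ref{coqe2} and Corollary \ref{cabel}, applied under Constraints \ref{co4} and \ref{co5}, we may lift this path to a path in $\Lambda_S$ from a vertex $(a,+)$ to a vertex $(b,-)$ with $-\pi(a)=-\pi(b)=k$. Using the group law (cf.\ \eqref{pasl}), the element $g:=(b,-)(a,+)^{-1}=(b+a,-)\in G_{-2}$ is a product of the generators in $X$, and acts on $\R^n$ (cf.\ \eqref{azione}) by
\[
g\cdot k \;=\; -\pi(b+a)-k \;=\; -\pi(a)-\pi(b)-k \;=\; k+k-k \;=\; k.
\]
Thus $g$ is a non-trivial element of $G_{-2}$ of the form $(c,-)$ fixing $k\in S^c$, i.e.\ $\pi(c)=2k$ with $\eta(c)=-2$, which is exactly the configuration ruled out by Constraint \ref{co5} (the residual cases $k=v_i$ being impossible because $k\in S^c$). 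This contradiction shows that $z_k$ and $\bar z_k$ lie in distinct connected components of $\Gamati$.

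Finally, the two components so obtained are conjugate by construction: the involution $\ast$ sends the component $\tilde A_{(k,+)}$ containing $z_k$ onto the component $\tilde A_{(k,-)}$ containing $\bar z_k$, and each projects onto $A$ by Theorem \ref{oneone}. Hence the preimage of $A$ in $\Gamati$ is precisely $\tilde A_{(k,+)}\sqcup\tilde A_{(k,-)}$, a disjoint union of two conjugate connected components, as claimed. The only genuinely delicate step is the disjointness, which is the geometric translation of the absence of odd loops of bounded length through $k\in S^c$; this is exactly the content of Constraint \ref{co5}, so once one has the lifting provided by Corollary \ref{cabel} the argument reduces to the short group-theoretic computation above.
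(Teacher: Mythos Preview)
Your argument follows the same route as the paper: both reduce the disjointness of the two conjugate components to the exclusion, via Constraint~\ref{co5}, of an odd loop at a point $k\in S^c$. There is, however, one point you should make explicit. Constraint~\ref{co5} is a \emph{finite} list of inequalities: it only excludes fixed points of elements $(c,-)$ with $\sum_i|c_i|\le 4q(n+1)$, i.e.\ words of length $\le 2n+2$ in $X$. Your element $g=(c,-)$ is the product of the edge labels along the lifted path, so you must explain why that path can be taken of length $\le 2n+2$. This follows because the lifted path lies in a single component of $\Lambda_S$, which by the already-established first part of Theorem~\ref{oneone} (via Theorem~\ref{sunto} and Corollary~\ref{cabel}) is isomorphic to $A_k$ and hence has at most $2n$ vertices; a simple path between two of its vertices therefore has length $\le 2n-1$.

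Once you have that isomorphism $\mathcal A_{(a,+)}\cong A_k$ in hand, the explicit group computation is actually superfluous: both $(a,+)$ and $(b,-)$ lie in $\mathcal A_{(a,+)}$ and map under $-\pi$ to the same vertex $k$ of $A_k$, directly contradicting injectivity. This is how the paper compresses the argument to one line (``a loop in the geometric graph which is not a loop in $\Lambda_S$, which is excluded by Constraint~\ref{co5}'').
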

 Each $\GA\in \BU_{n}$ has realizations in $\Gama$ and the choice of a representative in the equivalence class fixes a root in each component of $\Gama$.
  For all  $k\in S^c$ set $x(k):=x(A)$ to be the root of the component $A$  of $\Gama$ to which $k$ belongs. By Corollary \ref{cabel} and Formula \eqref{lasig}:
 \begin{lemma}\label{illift}
Each component $A$ can be lifted defining in a compatible way elements $g(k)$  so that  $k=g(k)x(A),\ g(k)=(L(k),\sigma(k))$ and if $k_1,k_2$ are joined by an edge marked $\ell\in G$ we have $g(k_2)=\ell g(k_1)$.
\end{lemma}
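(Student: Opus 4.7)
The plan is to build $g(k)$ by fixing the root and multiplying together the edge markings along a path from the root to $k$, and then show that the resulting element is path--independent. Fix $A\subset \Gama$ and its root $x(A)$; we decree $g(x(A))=(0,+)=\mathrm{Id}$. For any other vertex $k\in A$ pick a path $x(A)=k_0,k_1,\ldots,k_t=k$ in $A$ with consecutive edges marked $\ell_1,\ldots,\ell_t\in X$, and define $g(k):=\ell_t\ell_{t-1}\cdots\ell_1$ in the group $G=\Z^m\rtimes\Z/(2)$. Writing $g(k)=(L(k),\sigma(k))$, the action of $G$ on $\R^n$ from \eqref{azione} translates each edge in $\Gama$ into the corresponding multiplication on the left, so an immediate induction on $t$ gives $k=g(k)x(A)$, hence in particular $-\pi(L(k))+\sigma(k)x(A)=k$.

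The main point to verify is that $g(k)$ does not depend on the choice of path. Equivalently, every loop in $A$ based at $x(A)$ must lift to a loop in the Cayley graph $G_X$ based at $\mathrm{Id}$, i.e.\ the product of its edge markings must be the identity of $G$. By Theorem \ref{sunto} we already know that $A$ has at most $2n$ vertices, so every minimal loop in $A$ involves at most $2n+2$ elements of $X$. If the product $g$ of such a loop were not the identity, then $g$ would fix the corresponding vertex $k\in A$ under the action \eqref{azione}. When $g=(a,+)$ this would force $\pi(a)=0$ with $\sum_i|a_i|\leq 4q(n+1)$, contradicting Constraint \ref{co4}; when $g=(a,\tau)$ it would force $k$ to satisfy the resonance \eqref{pririn}, contradicting Constraint \ref{co5}. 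Thus every loop in $A$ does lift to a loop in $\Lambda_S$, which is exactly the content of Corollary \ref{cabel}. Therefore $g(k)$ is well defined.

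The compatibility with edges is then immediate from the construction: if $k_1,k_2\in A$ are joined by an edge marked $\ell\in X$, pick any path from $x(A)$ to $k_1$ realizing $g(k_1)$ and extend it by the edge to $k_2$; then the chosen product defining $g(k_2)$ is $\ell\,g(k_1)$, and by well--definedness this must equal $g(k_2)$ computed along any other path. Hence $g(k_2)=\ell g(k_1)$, as required, and the connected subgraph of $\Lambda_S$ with vertices $\{g(k):k\in A\}$ is a lift of $A$ isomorphic to it via $-\pi$, in agreement with Theorem \ref{oneone}.

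The only delicate step is the well--definedness argument, which is why the constraints \ref{co4} and \ref{co5} were imposed with the precise quantitative bound $4q(n+1)$: they are tailored exactly to kill all possible nontrivial stabilizers arising from loops of length at most $2n+2$, matching the uniform bound on component sizes proved in Theorem \ref{sunto}. Everything else is bookkeeping in the semidirect product $G$.
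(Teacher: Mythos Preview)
Your proposal is correct and follows essentially the same approach as the paper: the paper simply cites Corollary \ref{cabel} and Formula \eqref{lasig}, and your argument is precisely an unpacking of Corollary \ref{cabel}'s proof (path-product definition of $g(k)$, well-definedness via Constraints \ref{co4} and \ref{co5} applied to loops of length $\leq 2n+2$, with Theorem \ref{sunto} supplying the component-size bound). The edge-compatibility $g(k_2)=\ell g(k_1)$ is indeed immediate from the construction, as you note.
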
   
Clearly if $A$ is a realization of $\GA$ then $(L(k),\sigma(k))$ is just  the vertex of $\GA$ which is identified with  $k$ in the isomorphism between $A$ and $\GA$.
\begin{example}
We consider the component $A_{k_1}$ of Example \ref{EGG} (which exists provided that $n> 2$).  This component is the realization of  the combinatorial graph $\GA$ of Example \ref{EGG2}. Hence:
\begin{equation}\label{glielle}
g_{k_1}=(0,+)\,,\quad g_{k_2}=(e_3-e_1,+)\,,\quad g_{k_3}=(-e_1-e_2-2e_3,+)\,,\quad g_{k_4}=(-e_1-e_2,-)\,.
\end{equation} 
\end{example}

\section{Proof of Theorem   \ref{teo1} \label{lapro}}

\subsubsection{Reduction to constant coefficients\label{recoc}}
We think of $y=(y_1,\ldots,y_m),\ x=(x_1,\ldots,x_m)$ as vectors so that, given $a=\sum_in_ie_i\in{\Z^m}$ we have $a\cdot x:=\sum_in_ix_i, \, a\cdot dx:=\sum_in_idx_i=d(a\cdot x)$.  Furthermore $dy\wedge dx=\sum_idy_i\wedge dx_i$.

Before proving the Theorem in general we show how to reduce to constant coefficient a single block. As  usual we use the graphs in Example \ref{EGG}. 
\begin{example}\label{redEGG}
Consider for $q=1$, the Hamiltonian:
$$ (\ome(\xi),y)+ \sum_{i=1}^4 |k_i|^2 |z_{k_i}|^2+4\sqrt{\xi_1\xi_3} e^{\ii(x_1-x_3)}z_{k_1}\bar z_{k_2}+  4\sqrt{\xi_2\xi_3} e^{\ii(x_2-x_3)} z_{k_2} \bar z_{k_3}+ $$ $$4\sqrt{\xi_1\xi_2} e^{-\ii(x_1+x_2)} z_{k_2}   z_{k_4}+  4\sqrt{\xi_1\xi_3} e^{-\ii(x_1-x_3)}\bar z_{k_1} z_{k_2}+  4\sqrt{\xi_2\xi_3} e^{-\ii(x_2-x_3)} \bar z_{k_2} z_{k_3}+ 4\sqrt{\xi_1\xi_2} e^{ \ii(x_1+x_2)} \bar z_{k_2} \bar  z_{k_4},$$
the terms depending on $z,\bar z$ are those of Formula \eqref{hamex}. We apply the change of variables:
$$z_{k_i}= e^{-\ii L(k_i).x}z_{k_i}' ,\ y=y'+\sum_{i=1}^4  L(k_i)  |z_{k_i}'|^2,\ x=x', $$ where $L(k_i)$ are defined in Lemma \ref{illift} and given in formula \eqref{glielle}:
$L(k_1)=0,$ $L(k_2)= e_3-e_1$,  $L(k_3)= -e_1-e_2-2e_3$, $L(k_4)=-e_1-e_2$. A direct check shows that this change of variables is symplectic and that the Hamiltonian in the new variables is:
\begin{equation}\label{redH1}
(\ome(\xi),y' )+ \sum_{i=1}^4 \Big(\ome_0,L(k_i))+|k_i|^2 \Big)|z_{k_i}'|^2 +\tilde{\mathcal Q}
\end{equation} where $\ome(\xi)= \ome_0-2\xi$, and:
 $$\tilde{\mathcal Q}=-2\sum_{i=1}^4(\xi,L(k_i))|z_{k_i}'|^2+  4\sqrt{\xi_1\xi_3} z_{k_1}'\bar z_{k_2}'+  4\sqrt{\xi_2\xi_3} z_{k_2}' \bar z_{k_3}'+ $$ $$4\sqrt{\xi_1\xi_2}  z_{k_2}'   z_{k_4}'+  4\sqrt{\xi_1\xi_3} \bar z_{k_1}' z_{k_2}'+  4\sqrt{\xi_2\xi_3}  \bar z_{k_2}' z_{k_3}'+ 4\sqrt{\xi_1\xi_2} \bar z_{k_2}' \bar  z_{k_4}' .$$

\end{example}

  Theorem \ref{teo1} is contained in the following, more precise, proposition:
  \begin{proposition}\label{reteo} i)\  The equations 
\begin{equation}\label{labella}
  z_k= e^{-\ii L(k).x}z_k' ,\ y=y'+\sum_{k\in S^c}  L(k)  |z_k'|^2,\ x=x'.
\end{equation} define  a  symplectic change  of variables $\Psi^{(3)}:\,D(s,r/2)\to D(s,r)$,  which preserves the spaces $V^{i}$.

{\em We denote by  $W=$ diag$(\{e^{\ii L(k).x}\}_{k\in S^c},\{ e^{-\ii L(k).x}\}_{k\in S^c})$,  the matrix of $\Psi^{(3)}$ on   $w$.} \smallskip 

ii)\  The Hamiltonian $H$ in the new variables is
$N'+P \circ \Psi^{(3)}$
where 
\begin{equation}\label{Nc}
N'=N\circ\Psi^{(3)}:= (\omega(\xi),y')+\sum_{k\in S^c}\Big(|k|^2+\big(\ome(\xi),L(k)\big) \Big)|z'_k|^2+\mathcal Q'(w'),
\end{equation}

and  \begin{equation}\label{comfor1}
\mathcal Q'(w'):= \mathcal Q(x,w' W)\equiv \mathcal Q\circ \Psi^{(3)}= \ \sum_{\ell\in X_q^0} \!\!c(\ell)\!\!\sum_{(h,k)\in \mathcal P_\ell }\!\!\!\!z'_h\bar z'_k
+ \sum_{\ell\in X_q^{-2}}\!\!\!\!c(\ell) \!\!\!\! \sum_{\{h,k\}\in \mathcal P_\ell }\!\!\!\![z'_h  z'_k
 + \bar z'_h\bar z'_k], \end{equation} is independent of $x$.\end{proposition}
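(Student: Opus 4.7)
The plan is to verify both parts of the proposition by direct calculation, with the key structural input being the lifting from Lemma \ref{illift}. Concretely, in each connected component $A$ of $\Gama$ a root has been fixed, and the lift produces $g(k)=(L(k),\sigma(k))\in G$ for every $k\in A$. The two essential \emph{local} relations that I will exploit are: for a black edge $\ell\in X_q^0$ oriented from $k_1$ to $k_2$ in $\Gama$, $L(k_2)-L(k_1)=\ell$; for a red edge $\ell\in X_q^{-2}$ between $k_1$ and $k_2$, $L(k_1)+L(k_2)=\ell$ and $\sigma(k_2)=-\sigma(k_1)$. Since by Theorem \ref{sunto} each component has at most $2n$ vertices and each edge marking satisfies $|\ell|_1\leq 2q$, the norms $|L(k)|_1$ are uniformly bounded by a constant $N=N(n,q)$.

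For part i), I pull back the symplectic form $\Omega=dy\wedge dx+\ii\sum_{k\in S^c}dz_k\wedge d\bar z_k$ under \eqref{labella}. Differentiating $z_k=e^{-\ii L(k)\cdot x}z_k'$ and $\bar z_k=e^{\ii L(k)\cdot x}\bar z_k'$ and using $|z_k|^2=|z_k'|^2$, a short computation yields
\[
\ii\,dz_k\wedge d\bar z_k=\ii\,dz_k'\wedge d\bar z_k'+(L(k)\cdot dx)\wedge d(|z_k'|^2).
\]
Differentiating $y=y'+\sum_k L(k)|z_k'|^2$ together with $x=x'$ gives
\[
dy\wedge dx=dy'\wedge dx'+\sum_k d(|z_k'|^2)\wedge(L(k)\cdot dx),
\]
and the two sums of cross-terms are negatives of each other, proving $\Psi^{(3)\,*}\Omega=\Omega'$. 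The domain inclusion $D(s,r/2)\to D(s,r)$ follows from $\|w\|_{a,p}=\|w'\|_{a,p}$ together with the bound $|y-y'|\leq N\|w'\|_{a,p}^2$, up to adjusting numerical factors in the radii. Preservation of the grading $V^i$ is immediate: $x$ is untouched, $y$ differs from $y'$ by a quadratic expression in $w'$, and each $z_k$ is a Fourier-scalar multiple of $z_k'$.

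For part ii), I evaluate $N\circ\Psi^{(3)}$ term by term. The linear piece $(\omega(\xi),y)$ transforms to $(\omega(\xi),y')+\sum_{k\in S^c}(\omega(\xi),L(k))|z_k'|^2$, while $\sum_k|k|^2|z_k|^2$ is invariant, yielding the diagonal part of \eqref{Nc} with frequencies $|k|^2+(\omega(\xi),L(k))$. For $\mathcal Q(x,w)$ I substitute into each summand of \eqref{laquadra}: a black summand $c(\ell)e^{\ii\ell\cdot x}z_h\bar z_k$ with $(h,k)\in\mathcal P_\ell$ becomes
\[
c(\ell)\,e^{\ii(\ell-L(h)+L(k))\cdot x}\,z_h'\bar z_k',
\]
which collapses to $c(\ell)z_h'\bar z_k'$ by the black-edge relation $L(h)=L(k)+\ell$. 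A red summand $c(\ell)e^{\ii\ell\cdot x}z_hz_k$ with $\{h,k\}\in\mathcal P_\ell$ becomes
\[
c(\ell)\,e^{\ii(\ell-L(h)-L(k))\cdot x}\,z_h'z_k',
\]
which collapses to $c(\ell)z_h'z_k'$ by the red-edge relation $L(h)+L(k)=\ell$; the conjugate summand $c(\ell)e^{-\ii\ell\cdot x}\bar z_h\bar z_k$ is treated identically. Assembling these yields precisely \eqref{comfor1}, which is manifestly independent of $x$.

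The main point that must be watched is that the required cancellations depend only on the \emph{local} relation $L(h)\pm L(k)=\ell$ attached to a single edge of $\Gama$, not on the whole path from the root to $h$ or $k$. This is exactly the situation here, because each monomial in \eqref{laquadra} is indexed by a pair $(h,k)\in\mathcal P_\ell$ coming from one element $\ell\in X_q$, so a single application of Lemma \ref{illift} suffices. As a corollary, the construction is independent of the choice of root within each component: a different root shifts $L(k)$ by a common element of $\Z^m$ (and may flip all $\sigma(k)$), but such a global shift within a single component is absorbed by the two compensating factors appearing in each edge term, leaving $\mathcal Q'$ unchanged.
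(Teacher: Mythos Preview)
Your proof is correct and follows essentially the same route as the paper's: direct verification that the cross terms in the pulled-back symplectic form cancel, then termwise substitution into $N$ using the edge relations $L(h)-L(k)=\ell$ (black) and $L(h)+L(k)=\ell$ (red) from Lemma \ref{illift}. One small correction: your equality $\|w\|_{a,p}=\|w'\|_{a,p}$ is only valid for real $x$; on the complexified torus $\T^m_s$ the factors $e^{-\ii L(k)\cdot x}$ have modulus up to $e^{Ns}$, so the right estimate is $\|w\|_{a,p}\le e^{Ns}\|w'\|_{a,p}\le e^{Ns}r/2\le r$ for $s$ small, exactly as the paper argues.
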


iii)   $\tilde P:= P \circ \Psi^{(3)}$ satisfies the bounds of Theorem \ref{teo1}, {  iv)}.
 \begin{proof} {\it i)}
By definition $|L(k)|\leq 4nq$ for all $k$. 
Since $$ \sup_{D(s,r/2)}| w'|_{a,p}\le e^{ C s}|w|_{a,p}\le e^{ C s}r/2 \le r$$ for $s$ small enough the transformation is well defined from $D(s,r/2)$ to $D(s,r)$. It is symplectic because: 
$$dy\wedge dx +\ii dz\wedge d\bar z=dy'\wedge dx'- \sum_k(L(k)\cdot dx' )\wedge d(|z_k|^2) +$$ $$\ii dz'\wedge d\bar z'  + \sum_kd(L(k). x' )\wedge(z'_k d\bar z'_k+\bar z'_k d z'_k)=dy'\wedge dx'+\ii dz'\wedge d\bar z'.$$
Finally it preserves the spaces $V^{i }$ since it is linear in the variables $w$ which have degree $1$ and in $y,|z_k|^2$ of degree 2. In fact it maps a space $V^{i ,j}$ into $\sum_{h=0}^iV^{i-h ,j+2h}$. \smallskip

 {\it ii)}\  We   simply substitute the new variables in the Hamiltonian, we obtain that  \begin{equation}
\label{oprim}(\ome(\xi),y)+\sum_k|k|^2|z_k|^2=(\ome(\xi),y')+\sum_k(\ome(\xi),L(k))  |z_k'|^2+\sum_k|k|^2|z_k'|^2.
\end{equation} 
   By definition of $W$ we have $\mathcal Q(x,w)\circ \Psi^{(3)}=\mathcal Q(x,w' W)$.
   Formula \eqref{comfor1}  follows from Lemma \ref{illift}. In fact we   substitute in Formula \eqref{laquadra} $  z_k= e^{-\ii L(k).x}z_k'$ then if $\ell\in X_q^0,\ (h,k)\in \mathcal P_\ell $ we have $$e^{ \ii(\ell,x)} z_h\bar z_k=e^{ \ii(\ell,x)}  e^{-\ii L(h).x}z_h' e^{ \ii L(k).x} \bar z_k'$$ and, by Formula \eqref{pasl}  we have $\ell-L(h)+L(k)=0$.  Similarly when 
   $\ell\in X_q^{-2},\ \{h,k\}\in \mathcal P_\ell $ we have $$e^{ \ii(\ell,x)} z_h  z_k=e^{ \ii(\ell,x)}  e^{-\ii L(h).x}z_h' e^{- \ii L(k).x}  z_k'$$ and, by Formula \eqref{pasl}  we have $\ell-L(h)-L(k)=0.$
 
 {\it iii)}\  Let us  prove the bounds. We notice that  the total degree $2i+j$ is the same in the two sets of variables.  Moreover $\Psi^{(3)}$ is independent of $\xi$, hence $P\circ  \Psi^{(3)}$ has the same order as $P$, see section \ref{lestpe}, and the bounds follow by Proposition \ref{gliord}.
     \end{proof}
\begin{remark} It is possible to choose also infinite sets of $v_i$  so that the  change of variables is still convergent in a ball. For this it is enough  to impose a reasonable growth  to $|v_i|$ as $i\to\infty$.
\end{remark} 

\subsection{Definitions of $\tilde\Ome_k,\ \tilde{\mathcal Q}$}  From Formula \eqref{laff} we have \begin{equation}
\label{laff1} N'=K'+(\nabla_\xi  A_{q+1}(\xi)-(q+1)^2A_{q }(\xi)\underline 1,y'+\sum_k L(k)|z'_k|^2)+\mathcal Q'(w'),\end{equation}$$  K'=( \ome_0,y')+\sum_k(( \ome_0,L(k))+ |k|^2) |z'_k|^2.
$$
We  set
\begin{equation}\label{tilq} \tilde\Ome_k= ( \ome_0,L(k))+ |k|^2\,,\quad \tilde{\mathcal Q}:=\mathcal Q'(w' )+\sum_k(\nabla_\xi  A_{q+1}(\xi)-(q+1)^2A_{q }(\xi)\underline 1,  L(k))|z'_k|^2 
\end{equation} and remark that the $\tilde\Ome_k$ are integers and the coefficients of the quadratic form $\tilde{\mathcal Q}$ are homogeneous in the variables $\xi$ of degree $q$.

We can group  $\tilde{\mathcal Q}=\sum_A \tilde{\mathcal Q}_A$ where the sum runs over all blocks $A\in \Gama$ and $\tilde{\mathcal Q}_A$ involves only  the variables $z'_k,\bar z'_k$ with $k$  appearing in the block. We now use the graph language.  Having made the change of variables we should really introduce a new graph  $\Gamati'$ expressing  the non--zero entries of $Q$ in the basis $z'$. In fact  by Remark \ref{ilgrang} this is just a subgraph of  that larger graph but it is also clearly isomorphic to  $\Gamati$ although the matrix entries have changed, so by abuse of notation we still denote it by the same symbol $\Gamati$.  

Take a block $A\in \Gama$  and let $\tilde A_{\pm}$ be the corresponding disjoint conjugate components in $\Gamati$ (by convention, in $\tilde A_+$ the root $x$ corresponds to $z_x$, while in $\tilde A_-$ the root $x$ corresponds to $\bar z_x$).  
\begin{remark}\label{scala}
 1.\quad $ \tilde{\mathcal Q}_A$ is a Hamiltonian on the symplectic space  $W_A$ with basis  $(z'_k,\bar z'_k)$, $k$ running over the vertices of $A$.
  
  2. \quad We denote the vertices in each $\tilde A_\pm$ by $Z_A$ and $\bar Z_A$  resp. . The variables 
  $Z_A$ and $\bar Z_A$ form the bases of two Lagrangian subspaces\footnote{Notice that the two bases  $\ii Z_A$ and $\bar Z_A$ are  dual bases only when $A$ does not contain red edges. Indeed, in general, the duality matrix is diagonal with elements $\pm 1$.}  decomposing $W_A$.
  
  3.\quad Both $K'$ and $(\nabla_\xi  A_{q+1}(\xi)-(q+1)^2A_{q }(\xi)\underline 1,y')$  in $N'$ commute  with $\mathcal Q(x, w' W)$ hence $ad(K'+(\nabla_\xi  A_{q+1}(\xi)-(q+1)^2A_{q }(\xi)\underline 1,y'))$  takes a scalar value on any given block  $ Z_A$.
\end{remark}
\subsubsection{The matrix blocks of $\tilde{\mathcal Q}$}
Set  $ \ii Q'_A $ to be the matrix of  $ad({\mathcal Q}'_A)$ and $ \ii D'_A $ to be the (diagonal) matrix of  $$ad(\sum_k(\nabla_\xi  A_{q+1}(\xi)-(q+1)^2A_{q }(\xi)\underline 1,  L(k))|z'_k|^2)$$ in the geometric basis $z'_k,\bar z'_k$ with $k\in S^c$.  Clearly the matrix representation of $\tilde{\mathcal Q}_A$  is $\tilde{ Q}_A=  Q'_A +D'_A$. Moreover, by definition of $\Gamati'$,  we have $Q'_A= Q'_{\tilde A_+}\oplus Q'_{\tilde A_-}$.

Given two vertices  $a\neq b\in \tilde A_+$  we have, from Formula \eqref{laquadra}, that the matrix element $Q'_{a,b}$ is non zero if and only if they are joined by an edge $\ell$ and then $Q'_{a,b}=c(\ell)$ if $b=z'_k$ or $Q'_{a,b}=-c(\ell)$ if $b=\bar z'_k$. The element $c(\ell)$  is   described in Formula   \eqref{laquadra}. 
On the diagonal we have $Q'_{a,a}=0$ while the $D'_{a,a}= (\nabla_\xi  A_{q+1}(\xi)- (q+1)^2A_{q }(\xi)\underline 1,L(k))$ if $a= z'_k$ or   $- (\nabla_\xi  A_{q+1}(\xi)- (q+1)^2A_{q }(\xi)\underline 1,L(k))$ if $a= \bar z'_k$ (cf. \eqref{lzpb}).\marginpar{cambiati 2 segni}  The same rules hold for vertices $a\neq b\in \tilde A_-$ and one easily verifies that \begin{equation}
\label{lopp}\tilde Q_{\tilde A_-}=- \tilde Q_{\tilde A_+}.
\end{equation}

By definition $L(k)$ depends only on the combinatorial graph $\GA$ of which $A$ is a realization, therefore the matrix $\tilde{ Q}_A$ depends only on the combinatorial block $\GA$.
 
In order to stress this point we write $\tilde Q_A\equiv C_\GA= C_{\GA,+}\oplus C_{\GA,-}$, with $C_{\GA,-}=-C_{\GA,+}$.\begin{lemma}\label{autoag}
For all combinatorial blocks $\GA$ which do not contain red edges, the matrix $C_{\GA,+}$ is self--adjoint for all $\xi\in A_\ro$. If $\GA$ contains red edges then each vertex has a sign. This defines a diagonal matrix of signs $\Sigma_\GA$ and  $C_{\GA,+}$ is self--adjoint with respect to the indefinite form defined by $\Sigma_\GA$.
\end{lemma}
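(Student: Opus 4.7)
The plan is as follows. First I observe that $\tilde Q_A = Q'_A + D'_A$, and $D'_A$ is a real diagonal matrix (its entries are real scalar products of $L(k) \in \Z^m$ with the real vector $\nabla_\xi A_{q+1}(\xi)-(q+1)^2 A_q(\xi)\underline 1$). Any real diagonal matrix is self-adjoint in the usual Hermitian sense and also commutes with any diagonal sign matrix $\Sigma_\GA$, so it is self-adjoint with respect to the form defined by $\Sigma_\GA$. Hence the problem reduces to checking the required symmetry for $Q'_A$, and by \eqref{lopp} it suffices to work with the block $Q'_{\tilde A_+}$.

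Next I unpack the matrix entries of $Q'_{\tilde A_+}$ from the graphical rules in \S\ref{Trules}, taking into account that the substitution \eqref{labella} exactly cancels the exponentials $e^{\pm \ii \ell\cdot x}$ (as used in the proof of Proposition~\ref{reteo}, item ii). The outcome of this case analysis is:
\begin{itemize}
\item[(a)] for a black edge $\ell \in X_q^0$ between two vertices $a,b$ of $\tilde A_+$, the endpoints are both of $z$-type or both of $\bar z$-type (same value of $\sigma$), and both matrix entries are equal, $(Q'_A)_{a,b}=(Q'_A)_{b,a}=\pm c(\ell)$;
\item[(b)] for a red edge $\ell \in X_q^{-2}$, the two endpoints are of opposite type, and the entries satisfy $(Q'_A)_{a,b}=-(Q'_A)_{b,a}=\pm c(\ell)$.
\end{itemize}
In both cases the coefficient $c(\ell)$ is a real quantity on $A_\ro$, by the explicit positive formula \eqref{cl}.

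Third, I define $\Sigma_\GA$ on the basis of $\tilde A_+$ as the diagonal matrix with entries $\sigma(g_a) \in \{+1,-1\}$, where $g_a = (L(a),\sigma(a))$ is the element provided by the lift of Lemma~\ref{illift} with convention $\sigma(g_x)=+1$ at the root. The value $\sigma(g_a)$ is the product of the $\tau$-factors along any path in $\GA$ from the root to $a$, and it is a well-defined function of $a$ (independent of the path chosen) precisely because the graph $\GA$ has no odd compatible loops at non-special vertices: this is exactly what Constraints~\ref{co4} and \ref{co5} were imposed to guarantee in \S\ref{otto}. When $\GA$ has no red edges, every path in $\GA$ uses only $X_q^0$ steps, so $\sigma(g_a)\equiv +1$ and $\Sigma_\GA = \mathrm{Id}$.

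The conclusion is then immediate from case analysis. For a black edge the entries of $Q'_A$ are already symmetric and $\Sigma_{a,a}=\Sigma_{b,b}$, so $(\Sigma_\GA Q'_A)_{a,b}=(\Sigma_\GA Q'_A)_{b,a}$. For a red edge $\Sigma_{a,a}=-\Sigma_{b,b}$, which exactly compensates the sign change in (b); once more $(\Sigma_\GA Q'_A)_{a,b}=(\Sigma_\GA Q'_A)_{b,a}$. Therefore $\Sigma_\GA \tilde Q_A = \Sigma_\GA(Q'_A + D'_A)$ is a real symmetric matrix, which is to say $\tilde Q_A$ is self-adjoint with respect to the (indefinite, if red edges are present) inner product defined by $\Sigma_\GA$. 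The main point requiring care is the well-definedness of the sign assignment $a\mapsto \sigma(g_a)$; this is the step where one actually needs Constraint~\ref{co5}, and it is the only non-routine ingredient beyond reading off the matrix entries from the edge rules.
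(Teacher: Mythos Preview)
Your proof is correct and follows essentially the same route as the paper's: the paper's argument is the one-liner ``it follows immediately from the definition of $c(\ell)$ and the explicit formulas for $C_{\GA,+}$'', and you have simply unpacked that line by splitting off the diagonal part $D'_A$, doing the black/red edge case analysis on $Q'_A$, and checking that $\Sigma_\GA Q'_A$ is real symmetric.

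One small remark: your discussion of the well-definedness of $\sigma(g_a)$ via Constraints~\ref{co4} and~\ref{co5} is more than is needed here. At this point in the paper the isomorphism $\tilde A_+\cong\GA$ has already been established (Theorem~\ref{oneone}, Lemma~\ref{illift}), so each vertex of $\tilde A_+$ is already unambiguously either a $z'_k$ or a $\bar z'_k$; the sign can simply be read off from the vertex type without re-deriving path-independence. The constraints are what made the lift well-defined earlier, but you do not need to re-invoke them for this lemma.
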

\begin{proof}
This is essentially a consequence of the fact that $H$ is real, but it follows immediately from the definition of $c(\ell)$ and the explicit formulas for $C_{\GA,+}$ given above. \end{proof}
\begin{definition}\label{bfnu2}
We denote by $\mathcal M $ the finite list of matrices $\{C_\GA\}_{\GA}$ where $\GA$ runs over all the non--equivalent combinatorial blocks   with chosen root at (0,+) which contain only the indices $1,\ldots, 4qn$.  We denote by $\mathcal M(\xi) $ the finite (and independent of $m$) list of matrices $\{C_\GA\}_{\GA}$ where $\GA$ runs over all the non--equivalent combinatorial blocks   with chosen root at 0.
\end{definition}
\begin{corollary}\label{bfnu}
The matrices $\mathcal M(\xi)$ are obtained from  the matrices $\mathcal M$  by permuting the variables   $\{\xi_1,\dots,\xi_m\}$.
\end{corollary}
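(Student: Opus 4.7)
The strategy is to exploit the action of the symmetric group $S_m$ on $G=\Z^m\rtimes\Z/(2)$ given by $e_i\mapsto e_{\rho(i)}$ for $\rho\in S_m$, to show that this action sends non-equivalent combinatorial graphs rooted at $(0,+)$ to ones of the same kind, and that on the level of matrices it implements exactly a permutation of the parameters $\xi_1,\dots,\xi_m$. Combined with a bound on the number of indices used by any such graph, this identifies $\mathcal M(\xi)$ with the $S_m$-orbit of $\mathcal M$.

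First I would bound the set $I(\GA)\subset\{1,\dots,m\}$ of indices appearing in any vertex of a combinatorial graph $\GA$ rooted at $(0,+)$. By Theorem \ref{sunto} every such $\GA$ has at most $2n$ vertices, so a spanning tree $T$ has at most $2n-1$ edges. Each edge lies in $X_q^0\cup X_q^{-2}\tau$ and by Definition \ref{edges} elements of $X_q$ have support of cardinality at most $2q$; since each vertex $g_h=(L(h),\sigma(h))$ is obtained as the product in $G$ of the edges along the unique $T$-path from $(0,+)$ to $h$, one obtains $|I(\GA)|\le 2q(2n-1)\le 4qn$.

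Next I would verify $S_m$-equivariance. Under $\rho\in S_m$ the edge set $X$, the mass $\eta$, the root $(0,+)$ and the equivalence relation of Definition \ref{isotras} are all preserved, so $\rho$ permutes the set of non-equivalent combinatorial graphs rooted at $(0,+)$. Inspecting the formulas, the off-diagonal entry $c(\rho\ell)$ of $C_{\rho(\GA)}$ is related to $c(\ell)$ by $c(\rho\ell)(\xi)=c(\ell)(\rho^{-1}\xi)$: the prefactor $\xi^{(\ell^++\ell^-)/2}$ transforms by definition, and the sum over $\alpha\in\N^m$ with $|\alpha+\ell^+|_1=q$ in \eqref{cl} is invariant under the joint change of variables $\alpha\mapsto\rho\alpha$, $\xi\mapsto\rho\xi$, since the multinomial coefficients are invariant under permutations of their lower entries. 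The diagonal corrections $(\nabla_\xi A_{q+1}(\xi)-(q+1)^2A_q(\xi)\underline 1,L(k))$ transform the same way, because $A_{q+1},A_q$ (Formula \eqref{symme}) are symmetric polynomials, so that $\nabla A_r$ is $S_m$-equivariant. Hence $C_{\rho(\GA)}(\xi_1,\dots,\xi_m)=C_\GA(\xi_{\rho^{-1}(1)},\dots,\xi_{\rho^{-1}(m)})$.

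Finally, given any $\GA$ contributing to $\mathcal M(\xi)$, choose $\rho\in S_m$ with $\rho(I(\GA))\subset\{1,\dots,4qn\}$; this is possible by the bound above. Then $\rho(\GA)$ is a non-equivalent combinatorial graph rooted at $(0,+)$ using only indices in $\{1,\dots,4qn\}$, so $C_{\rho(\GA)}\in\mathcal M$, and $C_\GA$ is obtained from $C_{\rho(\GA)}$ by the permutation $\rho^{-1}$ of $\{\xi_1,\dots,\xi_m\}$. The only substantive step is the equivariance of the edge coefficient $c(\ell)$, which reduces to a routine change-of-variables check in the multinomial sum of \eqref{cl}; the rest is bookkeeping.
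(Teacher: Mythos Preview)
Your argument is correct and follows the same approach as the paper: bound the number of indices appearing in a combinatorial graph via a spanning-tree count (at most $2n$ vertices, hence at most $2n-1$ edges, each involving at most $2q$ indices, giving $|I(\GA)|\le 4qn$), then use the $S_m$-symmetry to move these indices into $\{1,\dots,4qn\}$. The paper's proof is terser---it simply asserts the index bound and the phrase ``up to symmetry''---whereas you spell out the $S_m$-equivariance of $c(\ell)$ and of the diagonal corrections explicitly; this extra verification is sound and does not change the strategy.
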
\begin{proof}
  In fact we have finitely many graphs with at most $2n$ vertices, the indices appearing in the edges are the ones appearing    on a maximal tree with at most $2n$ edges. On  each edge they involve at most $2q$ indices and so we have the a priori bound number $ 4qn$ of indices which, up to symmetry, can be taken to be $1,\ldots, 4qn$.
\end{proof}
  \begin{example}\label{less} We describe   the block  $C_{\GA,+}$ for the graph $\GA$ consisting of a unique  edge $\ell$.  Recall that  $  \eta(\ell)=\sum_i \ell_i $ is either $0$ or $-2$ so $1+\eta(\ell)=1, -1$ respectively. Set $$c(\ell)=(q+1)a(\ell) \,, 
\quad \nabla_\xi  A_{q+1}(\xi).\ell -(q+1)^2A_{q }(\xi)\eta(\ell)=  (q+1)(1+\eta(\ell)) b(\ell)\,, $$ we then have:
  \begin{equation}\label{24}C_{\GA,+}=(q+1)\begin{vmatrix}
\ \ 0&  (1+\eta(\ell))a(\ell)\\&& \\ a(\ell)&  \ \ b(\ell) 
\end{vmatrix}  \end{equation} 
 For $q=1$ one gets
$$  
\GA_1= \xymatrix{  (0,+)\ar@{->}[r]^{1,2\quad }&  (e_2- e_1,+)  &  \GA_2=\ (0,+)  \ar@{=}[r]^{1,2\quad \quad } &  (-e_1-e_2,-)}, 
$$  
  \begin{equation}\label{22}C_{\GA_1,+}=2\begin{vmatrix}
0& 2\sqrt{\xi_1\xi_2}\\&&&\\ 2\sqrt{\xi_1\xi_2}& \xi_1- \xi_2  
\end{vmatrix},\quad C_{\GA_2,+}=2\begin{vmatrix}
0&  -2\sqrt{\xi_1\xi_2}\\&&&\\  2\sqrt{\xi_1\xi_2}& -\xi_1-\xi_2  
\end{vmatrix}  \end{equation}

For $q=1$ consider the component $\GA$ of Formula \ref{EGA2} we obtain
$$C_{\GA,+}=2\begin{pmatrix}
 0 & 2\sqrt{\xi_1 \xi_3} & 0 &0 \\ 2\sqrt{\xi_1 \xi_3} &\xi_1-\xi_3& 2\sqrt{\xi_2\xi_3} & -2\sqrt{\xi_1\xi_2} \\ 0 &2\sqrt{\xi_2\xi_3}   &  \xi_1+\xi_2-2\xi_3 &0 \\ 0 & 2\sqrt{\xi_1\xi_2}&0 &-\xi_1-\xi_2
\end{pmatrix} $$\marginpar{cambiati 3 segni}the reader can easily verify that in the Hamiltonian \eqref{redH1} $\tilde{\mathcal Q}$ is represented by the matrix above.
\end{example}

\smallskip

 \begin{proof}[Proof of Theorem \ref{teo1}] The change of variables $\Phi_\xi= \Psi^{(1)}\circ \Psi^{(2)}\circ \Psi^{(3)}$.
Item i) follows from Corollary \ref{diffeo}. Item ii) follows from the corresponding item of Proposition \ref{reteo}. Item iii) also follows by item ii) of \ref{reteo}. The set of matrices $\mathcal M$ is defined in Definition \ref{bfnu2}. iv)  follows from the same statement for $P$.\end{proof}
\section{Proof of Proposition \ref{teo2} and Corollary \ref{ficof}}\label{pfiu}

\subsection{The arithmetic constraints\label{arcon}}   We want to show now that in some special cases, on the integer points of the geometric graph we may impose much stronger conditions.
\begin{proposition}\label{ilrido} (i) For $n=1$ and for generic choices of $S$, all the connected components of $\Gama$ are either vertices or single edges.

(ii) For $n=2$,  and  for every $m$  there exist infinitely many choices of generic tangential sites $S=\{v_1,\ldots,v_m\}$ such that, if $A$ is a connect component of the geometric graph  $\Gama$, then $A$ is either a vertex or a single  edge.
\end{proposition}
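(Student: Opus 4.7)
For part (i), I would simply read off the bounds of Theorem~\ref{sunto}. For $n=1$ the bound on black--only components is $n+1=2$ and the bound on components that contain a red edge is $2n=2$, so every connected component of $\Gama$ has at most two vertices, i.e. is either a vertex or a single edge. There is nothing further to check: all the required polynomial inequalities are already packaged into Definition~\ref{fincon}.

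For part (ii), the plan starts from the observation, already recorded in Example~\ref{apd}, that in dimension $n=2$ the finite list $\BU_2$ of non--equivalent possible combinatorial graphs consists only of: the single vertex $(0,+)$, the 2--vertex graphs $\{(0,+),(\ell,1+\eta(\ell))\}$, and the 3--vertex complete graphs $\{(0,+),(\ell,1+\eta(\ell)),(\ell',1+\eta(\ell'))\}$; direct inspection rules out 4--vertex graphs. Hence only the 3--vertex graphs in $\BU_2$ must still be forbidden from being realized by integer points in $S^c$. For such a $\GA$, the associated system \eqref{bacosG} is a pair of (linear and/or quadratic) equations in $x\in\mathbb R^2$; under Constraints~\ref{co1}--\ref{co8} these are independent, and the generic realization map of Proposition~\ref{codim} yields a single rational function $x^{(\GA)}(v_1,\dots,v_m)$. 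By Proposition~\ref{brik}, $\GA$ is realized in $\Gama$ if and only if $x^{(\GA)}(v_1,\dots,v_m)\in S^c\subset\mathbb Z^2$.

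I then introduce the arithmetic \emph{x--constraint}: for each of the finitely many 3--vertex $\GA\in\BU_2$ that has a generic geometric realization, one requires
\[
 x^{(\GA)}(v_1,\dots,v_m)\notin \mathbb Z^2 .
\]
Under this constraint (combined with Definition~\ref{fincon}), no component of $\Gama$ can contain a 3--vertex combinatorial subgraph, so by Theorem~\ref{oneone} every connected component of $\Gama$ reduces to a vertex or a single edge. The remaining task is the existence of infinitely many $S\in\mathbb Z^{2m}$ for every $m$ satisfying simultaneously the genericity and x--constraints. The genericity constraints cut out a Zariski--dense open cone in $\mathbb Z^{2m}$; for each 3--vertex $\GA$ and each $k\in\mathbb Z^2$ the locus $\{v:x^{(\GA)}(v)=k\}$ is a proper algebraic subvariety, so the ``bad'' set is a countable union of proper subvarieties.

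The main obstacle, and the real content of (ii), is the Diophantine step: showing that this countable union does not exhaust the integer points of the genericity cone. The plan here is to invoke a Bombieri--Pila style estimate as in \cite{BP}: on each proper subvariety of $\mathbb R^{2m}$ the integer points of height $\leq T$ grow like $O(T^{2m-1+\varepsilon})$, and since only values $|k|\lesssim T$ can arise from $|v|\leq T$, summing over the finitely many $\GA$ and over $k$ gives a bound strictly smaller than the $\Theta(T^{2m})$ integer points in the genericity cone. Hence the complement contains infinitely many integer points, giving infinitely many admissible $S$. The non--triviality hypothesis for the counting argument is that $x^{(\GA)}(v)$ is not identically an integer point as a function of $v$, which can be checked directly on each of the finitely many 3--vertex members of $\BU_2$.
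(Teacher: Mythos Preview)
Your argument for (i) is correct and matches the paper's: by Theorem~\ref{sunto}, both the black bound $n+1$ and the red bound $2n$ equal $2$ when $n=1$.

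For (ii), your overall strategy---enumerate the $3$-vertex members of $\BU_2$, impose the arithmetic constraint $x^{(\GA)}(v)\notin\mathbb Z^2$, and count to show infinitely many good $S$ survive---is exactly the strategy the paper follows. However, your counting step has a genuine gap. For a fixed $\GA$ the fibre $\{v:x^{(\GA)}(v)=k\}$ has codimension~$2$ in $\mathbb R^{2m}$, so the trivial lattice bound is $O(T^{2m-2})$; summing over the $O(T^2)$ relevant values of $k$ yields only $O(T^{2m})$, which does not beat the main term. Equivalently, the variety $A_U=\{(v,x): x\ \text{satisfies the two equations of}\ \GA\}$ has dimension $2m$ in $\mathbb R^{2m+2}$, and a bare dimension count on its integer points gives $O(T^{2m})$, again no saving. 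Invoking ``Bombieri--Pila style estimates'' on a $(2m{-}2)$-dimensional variety is not meaningful: those results are about \emph{curves}.

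What the paper does is precisely to manufacture a curve. One freezes $2m-1$ of the coordinates of $(v_1,\dots,v_m)$ and treats the two equations of $\GA$ as a system in the remaining $v$-coordinate together with $x\in\mathbb R^2$. One equation is used to solve for that stray $v$-coordinate (at most two roots), and the other becomes a plane curve: a circle when a red edge is present, and for two black edges (or for $q>1$) an explicit cubic or quartic, which is checked to be absolutely irreducible and to contain no line. On that curve the number of integer points in a box of side $R$ is $\ll R^{\delta}$ for circles and $\ll R^{1/2+\delta}$ in general by Bombieri--Pila. Multiplying by the $O(R^{2m-1})$ frozen choices gives $|A_U\cap[-R,R]^{2m+2}|\ll R^{2m-1+\delta}$, which is now strictly below $R^{2m}$. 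Your assertion $|k|\lesssim T$ also needs justification: since $x^{(\GA)}$ is rational, this fails near the zero locus of its denominator, and for the two-black-edge case the paper handles this by first restricting $v$ to the open sector where $|\pi(\ell_1)\wedge\pi(\ell_2)|\ge c(m)\,|\pi(\ell_1)|\,|\pi(\ell_2)|$, which still contains $\asymp R^{2m}$ integer points.
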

\begin{proof}
(i) It is proven in Example \ref{apd}.

(ii)This statement  is proved in \cite{GYX} for  $q=1$ by a very direct and lengthy computation. Here we give a more conceptual proof based on estimates of integral points on algebraic curves, valid for all $q$.

The simplest  of such estimates is   that, for all $0<\delta<1$  one can estimate the number of integral points  of  a circle of radius $R$  by $\ll R^\delta$ as $R\to\infty$.

In general Bombieri and Pila prove, in \cite{BP} Theorem 5, that if $C$  is a real absolutely irreducible algebraic curve of degree  $d$ and if    $N>e^d$  the number of integral points in $C$  in the square $[0,N]\times [0,N]$ is bounded by 
$$N^{1/d}\exp(12\sqrt{\log(N)\log \log (N)}).$$
In particular for any $\delta>0$  and $N$ large we have a bound $N^{1/d+\delta}$.

We need a less fine estimate, if the curve is not necessarily absolutely irreducible but contains no lines we still get, by looking at its irreducible factors an estimate of type $N^{1/2+\delta}$ for $N$ large.  We want to use these bounds  for our estimates.

Let us first characterize the sets $x,v_1,\dots,v_m$ such that there is an edge marked $\ell$ with vertex in $x$. We can interpret Formulas \eqref{iperp}--\eqref{sfera} by saying that  $x,v_1,\dots,v_m$ satisfy an equation which is the equation for a sphere in either $x$ (red edge) or one of the $v_j$'s-- here we consider the other variables as parameters.
Suppose now that  $x$ is a vertex of  a graph $U$ with two different edges $\ell_1,\ell_2$. Hence $x$   satisfies the two equations given by these edges.  
\smallskip

{\bf Case $q=1$.} We know that there is an index $i=1,\dots,m$ such that $e_i$ appears in $\ell_1$ but not in $\ell_2$ (otherwise we would have $\ell_1= e_i-e_j$ and $\ell_2= -e_i-e_j$ which does not have a realization in $\Gama$). 

Suppose now that $\ell_2$ is red.
We next claim that if $|v_i|\leq R$ for all $i$ then $|x|< C R$ (where $C$ is a universal constant). Indeed since one of the edges is red then $x$ belongs to the circle of diameter $v_1,v_2$ (we are assuming without loss of generality  that the red edge is $\ell_2=-e_1-e_2$).

  Consider the set
$$A_U:\{v_1,\ldots,v_m,x\}\subset \Z^{2m+2},\ |v_i|\leq R,\quad x\quad\text{solves the equations given by}\  U:=\ell_1,\ell_2.$$  We claim that $|A_U|\ll R^{2m-1+\delta}$. Without loss of generality we may suppose that $\ell_1$ depends non trivially on $e_3$. 

We first use the equation given by $\ell_1$ to express $v_{3,1}$ in terms of the other parameters. This of course gives at most two solutions. 
Then the equation for $\ell_2$ is a circle in $x$ with diameter $\leq 2R$. 

Thus $\cup_UA_U$ has $\ll \binom{m}{2}^2 R^{2m-1+\delta}$  elements.  When $R$ is large $\ll \binom{m}{2}^2 R^{2m-1+\delta}<    R^{2m }$ thus the projection of this set on the first $m$ coordinates is not surjective and thus any point outside this projection is a set of tangential sites satisfying the condition of our proposition.
  
  To treat the case of $\ell_1,\ell_2$ both black we need to ensure that $|x|< CR$ provided that the $|v_i|<R$. We compute $x$ by Cramer's rule, the denominator is $\pi(\ell_1)\wedge  \pi(\ell_2)$ while the numerator is bounded proportionally to $|\pi(\ell_1)||\pi(\ell_2)|R$.  To obtain the desired bound we   restrict $v_1,\dots,v_m$ to the sector where $|\pi(\ell_1)\wedge  \pi(\ell_2) |\geq c(m) |\pi(\ell_1)|| \pi(\ell_2)|$ for all choices of $\ell_1,\ell_2\in X^0_1$, here $c(m)$ is some constant depending only on $m$. The set of $v_i$'s which satisfy this constraint and have $|v_i|<R$ is still of the order of $R^{2m}$.
  
  As done before, we use the equation given by $\ell_1$ to express $v_{3,1}$ in terms of the other parameters. 
Then the equation for $\ell_2$ is a circle in one of the variables $v_1,v_2$ with diameter $\leq 2 C R$. 
\smallskip

 {\bf Case $q>1$.} It is no longer always  true that there exists an index $i$ such that $e_i$ appears in $\ell_1$ but not in $\ell_2$.  If this restriction is satisfied then the previous proof applies, otherwise we claim that we can apply Theorem 5  in the paper of Bombieri and Pila \cite{BP}. In fact look at an equation $$|(x,y)|^2+((x,y),\sum_im_iv_i)=-1/2(|\sum_im_iv_i|^2+\sum_im_i|v_i|^2)$$ or equivalently
 $$ |2(x,y)+\sum_im_iv_i|^2:= |(x',y')|^2= -(|\sum_im_iv_i|^2+2\sum_im_i|v_i|^2).  $$
 since $\sum_im_i=-2$, either  $\sum_im_iv_i=-e_a-e_b$  (where the previous arguments apply)or  there is then an index  $j$ with $m_j>0$, write the equation  in terms of  $ (x' ,y'),z =v_{j,1}$ considering the other coordinates as parameters.  This defines an ellipsoid
$$(x')^2+(y')^2+(m_j^2+2m_j)z^2+az+b=0 $$ which, if the remaining coordinates of the   $v_j$ are bounded by some $R$  is contained in a cube  $[-CR,CR]^3$  with $C$  some fixed integer depending on the $m_i$. We now intersect with the other equation and claim that we have an absolutely irreducible curve, to its projection on one of the coordinate planes we apply the Theorem of Bombieri  and Pila.  The other equation is of the form $(\vec x,\sum_in_iv_i)=1/2(|\sum_in_iv_i|^2+\sum_in_i|v_i|^2)  $ if the other edge is black  or $(\vec x,\sum_i(n_i-m_i)v_i)=-1/2(|\sum_in_iv_i|^2+\sum_in_i|v_i|^2) +1/2(|\sum_im_iv_i|^2+\sum_im_i|v_i|^2)  $ if the edge is red.  
The equation is  of the form $n_jx'z+cz+dy'+ex'+fz^2+g=0$, where $d=\sum_i n_i v_{i,2}$.  If $d\neq 0$   we solve it for $y$ and see that we have a  plane quartic, otherwise we project it to the plane $y',z$ still getting  a  plane quartic. In either case the quartic does not contain a real line since its real points are bounded, the estimate on its integral points follows from Theorem of Bombieri  and Pila. 

\bigskip

Two black edges.  $\sum_im_ie_i,\sum_jn_je_j,\ \sum_im_i=\sum_jn_j=0$ the equations are  $$(\vec x,\sum_im_iv_i)=1/2(|\sum_im_iv_i|^2+\sum_im_i|v_i|^2)  ,  (\vec x,\sum_in_iv_i)=1/2(|\sum_in_iv_i|^2+\sum_in_i|v_i|^2)  $$ say that $m_1\neq n_1$   consider all the $v_i,i>1$ and $y$ as parameters, let $z,w$ denote the coordinates of $v_1$ which we consider as variables,  so write  the equations as
$$ x(a +m_1z) =1/2(m_1^2+m_1)[z^2+w^2]+m_1zw+b z+cw+d $$
$$ x(a' +n_1z) =1/2(n_1^2+n_1)[z^2+w^2]+n_1zw+b' z+c'w+d' $$ we may assume $n_1\neq 0$ otherwise we are in the previous case of an index appearing in $\ell_1$ and not in $\ell_2$.  Project to the $z,w$ plane and we see that we obtain the cubic
$$ [ 1/2(m_1^2+m_1)[z^2+w^2]+m_1zw+b z+cw+d](a' +n_1z) $$$$=(a +m_1z)[1/2 (n_1^2+n_1)[z^2+w^2]+n_1zw+b' z+c'w+d']$$  of equation
$$Az[z^2+w^2]+ Bz^2+Cw^2+Dzw+Ez+Fw+G=0$$ with $A=1/2 n_1m_1(m_1 - n_1)\neq 0.  $
Let us show  that this is absolutely irreducible.  Otherwise it factors through a linear and a quadratic term, and  we can always assume that the linear term is defined over $\R$ since with any factor we also have the conjugate factor.  This implies that if there is a factorization it is of the form
$$(Az+K)(z^2+w^2+Mz+Nw+P) $$ which implies that 
$$ Az ( Mz+Nw+P)  +K (z^2+w^2+ Mz+Nw+P)= Bz^2+Cw^2+Dzw+Ez+Fw+G$$ 
$$ AM+K=B,\ K=C,\ AN=D,\   AP+KM=E,\  KN=F,\  KP=G$$in particular $C=K$.
Now $C=(m_1^2+m_1)a'-(n_1^2+n_1)a$ where $a=\sum_{i>1}m_iv_{i,1},\ a'=\sum_{i>1}n_iv_{i,1}$.  
 
We have $AF=  CD$  and we claim that this imposes a  non trivial restriction to the parameters, thus for a large set of parameters we can apply the the method.

We have $F=ca'-c'a$  and $c=-m_1y+m_1\sum_{j>1}m_jv_{j,2},\ c'=-n_1y+n_1\sum_{j>1}n_jv_{j,2}$, while $D=a'm_1+cn_1-an_1-c'm_1$. We see that in $D$  the variable $y$ disappears while in $F$ it appears linearly with coefficient  $-m_1a'+n_1a=\sum_{i>1}(n_1m_i-m_1n_i)v_{i,1}$. We  cannot have $(n_1m_i-m_1n_i)=0$ for all $i>1$ unless $\ell_2$ is a multiple of $\ell_1$.  This case though we have excluded in Theorem \ref{ridma}.  \smallskip

We conclude that, for any $\delta>0$, the number of integral points are less that a constant (dependent only on $\delta$)  times $R^{1/2+\delta}$.  At this point the proof is identical to the previous argument. \end{proof}

 We denote the sets $S$ which do not contribute to any $A_U$ as {\em arithmetically generic} and think of the condition $\not\exists x\in\Z^2: (v_1,\dots,v_m,x)\in \cup_U A_U$ as an {\em arithmetic} constraint.
 
\begin{proposition}\label{blocchini}
Under the geometric and arithmetic constraint for $n=1$ or   $n=2$   all the non--diagonal blocks in $\tilde{ Q}$ are  two by two and given by Formula \eqref{24}.
\end{proposition}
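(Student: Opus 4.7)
The plan is to combine Proposition \ref{ilrido} with the explicit block computation already carried out in Example \ref{less}, since all the substantive content is contained in those two statements.

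First I would unpack what $\tilde Q$ looks like. By Theorem \ref{oneone} and the construction in \S\ref{recoc}, the matrix $\tilde Q$ decomposes as $\bigoplus_A \tilde Q_A$ indexed by the connected components $A$ of $\Gama$. For each such $A$, which is the realization of a combinatorial graph $\GA\in\BU_n$, one has $\tilde Q_A = C_{\GA,+}\oplus C_{\GA,-}$ with $C_{\GA,-}=-C_{\GA,+}$ (cf.\ \eqref{lopp}); in particular a component with $k$ vertices contributes a pair of $k\times k$ blocks. So controlling the sizes of these blocks is the same as controlling the sizes of the components of $\Gama$.

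Next I would invoke Proposition \ref{ilrido} directly: under the geometric constraints of Definition \ref{fincon} together with the arithmetic constraint of \S\ref{arcon}, when $n=1$ or $n=2$ every connected component of $\Gama$ is either an isolated vertex or a single edge. Isolated vertices produce only $1\times 1$ blocks, which are by definition part of the diagonal of $\tilde Q$ and hence are not counted among the ``non--diagonal blocks'' in the statement.

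It therefore remains only to identify the block associated with a component of $\Gama$ consisting of a single edge. Such a component is the realization of an equivalence class of combinatorial graph $\GA$ whose vertices are the root $(0,+)$ and one further vertex $(\ell,1+\eta(\ell))$, with $\ell$ a single edge in $X_q^0\cup X_q^{-2}$. The off-diagonal entry of $C_{\GA,+}$ comes from $c(\ell)$ via \eqref{laquadra} (with the sign convention for red vs.\ black recorded in \S\ref{Trules}), while the diagonal correction is read off from the matrix $D'_A$ of \eqref{tilq}, evaluated on the two vertices. Performing these substitutions—exactly the calculation displayed in Example \ref{less}—produces the $2\times 2$ matrix of Formula \eqref{24}. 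The only potential obstacle was the arithmetic input needed to rule out components with three or more vertices in dimension two, but this is precisely the content of Proposition \ref{ilrido}(ii); once that is granted, the rest is a bookkeeping exercise in the definitions.
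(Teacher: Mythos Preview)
Your argument is correct and is precisely the intended one: the paper states Proposition \ref{blocchini} without proof because it is meant to follow immediately from Proposition \ref{ilrido} together with the single--edge block computation of Example \ref{less}, and that is exactly what you have written out.
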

 \begin{remark}

It  is unclear  what happens in higher dimension. One can use the same argument to exclude  graphs  of rank equal to the dimension, so   Dimension 3  could still behave in a special way.  On the other hand, for $q=1$  there is a different method using the second Melnikov condition  which we shall discuss elsewhere.

\end{remark}

\subsubsection{Real roots}\label{rero}  We ask if there are regions in the parameters $\xi$ where all blocks have real roots.  The issue is only for graphs containing red edges and the region is described by a finite set of inequalities given by Sylvester's Theory.

We discuss here the case in which all  graphs containing a red edge reduce to  this  edge.  As remarked in \S \ref{arcon}, one can  have this case in dimension $n=1,2$  for all $q$. 

The matrix associated to the graph $\GA$ consisting of a unique red edge $\ell$ is given by Formula \eqref{24} where:
\begin{equation}
\label{perco}a(\ell)=q\xi^{\frac{\ell^++\ell^-}{2}}\sum_{\alpha\in\N^m\atop{|\alpha+\ell^+|_1=q-1}}\binom{q+1}{\ell^-+\alpha}\binom{q-1}{ \ell^++\alpha}  \xi_i^\alpha,\ 
\end{equation}  $$\nabla_\xi  A_{q+1}(\xi).\ell -(q+1)^2A_{q }(\xi)\eta(\ell)= -(q+1)b(\ell) $$
 The characteristic polynomial of $C_\GA/(q+1)$   is $t(t-b(\ell))+a(\ell)^2$ with discriminant $b(\ell)^2-4a(\ell)^2$.
We want to show that there is a non empty open region in our parameter space where the roots of all these polynomials are distinct real, that is where all    these discriminants are strictly positive. For this, using the usual lexicographical order, let us compute the leading terms of all these polynomials.  Apply  Formula \eqref{perco} letting $d(\ell):=q-1-|\ell^+|_1$ we see that    the leading monomial of  $-4a(\ell)^2$ is   $\xi_1^{d(\ell)}\xi^{  \ell^++\ell^- }$. As for $b(\ell) $  it has the monomial $\xi_1^q$  appearing with  the following coefficient. In $\pd{A_{q+1}(\xi)}{\xi_1}$ 
the coefficient of $\xi_1^q$ comes from $\pd{ \xi_1^{q+1}}{\xi_1}=(q+1)\xi_1^q$  while for $i>1$ in $\pd{A_{q+1}(\xi)}{\xi_i}$ 
the coefficient of $\xi_1^q$ comes from $\pd{ (q+1)^2\xi_i\xi_1^{q }}{\xi_i}=(q+1)^2\xi_1^q$. If $\ell=(\ell_1,\ldots,\ell_m)$   the coefficient of  $\xi_1^q$ in $b(\ell) $ is $\ell_1+(q+1)\sum_{i>1}\ell_i$. Since $\sum_i\ell_i=-2$  we finally have $\ell_1+(q+1)\sum_{i>1}\ell_i= \ell_1-(q+1)\ell_1-2(q+1) =- q \ell_1-2(q+1)=-q(\ell_1+2)-2.$

In the term $-(q+1) A_{q }(\xi)\eta(\ell)=2(q+1) A_{q }(\xi)$ the  monomial  appears with coefficient  $2(q+1) $.
Thus we get a total contribution of  $-q\ell_1$. Thus if $\ell_1\neq 0$ the leading monomial of the discriminant is $\xi_1^{2q}$ with positive coefficient.  If $\ell_1=0$  let us look at the coefficient of $\xi_1^{q-1}$ in  $\pd{A_{q+1}(\xi)}{\xi_i},\ i\neq 1$. This comes from the terms  $\frac{q(q+1)}{2}\xi_1^{q-1}\xi_i^2$  giving $ q(q+1) \xi_1^{q-1}\xi_i $ and $ q(q+1) \xi_1^{q-1}\xi_i \xi_j,\ i\neq j\neq 1$  giving $ q(q+1) \xi_1^{q-1}\sum_{j\neq 1,i}\xi_j $.  Together we get $ q(q+\nobreak1) \xi_1^{q-1}\sum_{j\neq 1 }\xi_j $.  When we take the scalar product with $\ell$ we get  thus a total contribution of  $ -2q(q+1) \xi_1^{q-1}\sum_{j\neq 1 }\xi_j $. From $2(q+1) A_{q }(\xi)$ we get the  term $ 2q^2(q+\nobreak1) \xi_1^{q-1}\sum_{j\neq 1 }\xi_j $. Thus we get a leading term  of type $ 2(q^2-q)(q+1) \xi_1^{q-1} \xi_2$ unless $q=1$, in this case we need to do a different argument.

The leading term of  $b(\ell)^2$  is thus $\xi_1^{2q-2} \xi_2^2$ with positive coefficient. This gives the leading term in the discriminant unless $\ell^+=0$ hence $d(\ell)=0$ and $\ell=-2e_2$, but this is not possible by one of our first constraints.

Finally for $q=1$  the discriminants are all of type  $\xi_i^2+\xi_j^2-14\xi_i\xi_j$  so we see that in all cases we can apply the following Lemma.
\smallskip

   Given $j\in \N$ consider the list  $\mathcal M_j $  of monomials of degree $\leq j$ in the variables $\xi_i,\ i=1,\ldots, m$ ordered lexicographically, denote by $A\prec B$ this ordering.
 
Given a positive constant $D$ set
$$\mathcal A_D:=\{\xi\,|\,\xi_i>0,\ A(\xi)>DB(\xi)\},\ \forall B\prec A,\ A,B\in\mathcal M_j  \}.$$
\begin{lemma} \begin{enumerate}
\item  For every $D>0$  the open set $\mathcal A_D$ is non empty.
\item  Consider a list of polynomials $f_i(\xi)$    of degree $\leq j$ and with real coefficients. If,  for each $i$, the coefficient in $f_i$ of the leading monomial  is strictly positive then there is a positive constant $D$ so that in the region $\mathcal A_D$ we have $f_i(\xi)>0$ for all $i$.
\item Under the hypotheses of ii), if all the $f_i$ are homogeneous the non empty open set where $f_i(\xi)>0$ for all $i$ is a cone.\end{enumerate}

\end{lemma}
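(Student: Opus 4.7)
The idea is that all three items reduce to finite book-keeping once one shows that one can make the lex--leading monomial dominate by an arbitrarily large factor.

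For (i), I would exhibit an explicit point. Set $\xi_i = T^{N^{m-i}}$ for parameters $T,N>1$ to be chosen; then for any monomial $\xi^{\alpha}$ of degree $\leq j$ one has $\log \xi^{\alpha}=(\log T)\sum_{i} \alpha_i N^{m-i}$, so for two monomials $A=\xi^{\alpha}\succ B=\xi^{\beta}$ in $\mathcal M_j$ the ratio $A/B$ equals $T^{E(\alpha,\beta,N)}$ with $E(\alpha,\beta,N):=\sum_i(\alpha_i-\beta_i)N^{m-i}$. Lex order means that the first index $k$ with $\alpha_k\neq\beta_k$ satisfies $\alpha_k>\beta_k$, and since the exponents are integers bounded by $j$, for $N$ large enough (depending only on $j$ and $m$) the leading term $N^{m-k}$ dominates the rest, so $E(\alpha,\beta,N)\geq 1$ uniformly over the finitely many pairs $A\succ B$ in $\mathcal M_j$. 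Then picking $T>D$ gives $A(\xi)/B(\xi)\geq T>D$ for every such pair, proving $\xi\in\mathcal A_D$.

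For (ii), I would just split each $f_i$ as $c_iM_i+\sum_k c_{i,k}M_{i,k}$ with $M_{i,k}\prec M_i$ and $c_i>0$. Setting $C:=\max_{i,k}|c_{i,k}|/c_i$ and $K:=\max_i\#\{k\}$, for $\xi\in\mathcal A_D$ each lower monomial satisfies $M_{i,k}(\xi)<M_i(\xi)/D$, hence
\[
f_i(\xi)\geq c_iM_i(\xi)\Bigl(1-\tfrac{CK}{D}\Bigr),
\]
which is strictly positive as soon as $D>CK$. Note $\mathcal A_D$ is non-empty by (i), and since all monomials in $\mathcal M_j$ are strictly positive on $\mathcal A_D$, so is $M_i(\xi)$.

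For (iii), the open set $U:=\{\xi:\xi_i>0,\ f_i(\xi)>0\ \forall i\}$ contains $\mathcal A_D$ by (ii) so is non-empty; if each $f_i$ is homogeneous of some degree $d_i$, then $f_i(\lambda\xi)=\lambda^{d_i}f_i(\xi)>0$ for any $\lambda>0$ and any $\xi\in U$, so $U$ is closed under positive scalar multiplication, i.e.\ a cone.

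The only step that required any thought is (i), where the correct choice of scaling exponents (doubly exponential in the index through the $N^{m-i}$) was needed so that lex order in the exponents translates into plain dominance of the resulting real powers; once this is in place (ii) and (iii) are formal.
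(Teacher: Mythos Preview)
Your proof is correct and follows essentially the same approach as the paper's. The only cosmetic difference is in part~(i): the paper uses the single-parameter curve $\xi_i=t^{(j+1)^{m+1-i}}$ and lets $t\to\infty$, whereas you decouple this into two parameters $T,N$ (first choosing $N$ large so that lex order on exponents forces $E\geq 1$, then choosing $T>D$); both arguments exploit the same base-$N$ representation of the exponent vector to translate lex dominance into real dominance, and parts~(ii) and~(iii) are identical in substance.
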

\begin{proof} {\em i)} Consider  the curve $\xi_i:=t^{(j+1)^{m+1-i}}$,  if $M=\prod_{i=1}^m\xi_i^{h_i}$ we have that  on this curve $M(t)=t^{\sum_i h_i (j+1)^{m+1-i} }$. It is clear that $B=\prod_{i=1}^m\xi_i^{k_i}\prec  A=\prod_{i=1}^m\xi_i^{h_i}$  if and only   if the sequence $(k_1,\ldots  k_m)\prec (h_1,\ldots  h_m)$.  But if $   \sum_ik_i\leq j, \sum_ih_i \leq j,\quad  (k_1,\ldots  k_m)\prec (h_1,\ldots  h_m)$ we have $\sum_i h_i (j+1)^{m+1-i}>\sum_i k_i (j+1)^{m+1-i}$ so that $\lim_{t\to\infty}A(t)/B(t)=\infty.$  For any $D>0$, for large $t$ the curve lies in $\mathcal A_D$.\smallskip

{\em ii)} The leading monomial is the maximum in the lexicographic order. Take a polynomial  $f=aM+ \sum_i^ka_iM_i$  with $M$ leading monomial and $a>0$. We have, in the quadrant $\xi_i>0$, that $aM+ \sum_i^ka_iM_i\geq aM-\sum_i^k|a_i|M_i $. If $\deg(f)\leq j$, in  $\mathcal A_D$ we further have:
$$aM-\sum_i^k|a_i|M_i\geq  \sum_i^k(\frac{a}{k}D-|a_i|)M_i .$$
Since $a>0$ it is enough to choose $D>\max  |a_i|\frac{k}{a} $.

{\em iii)} The set where an homogeneous polynomial is positive is a cone. 
\end{proof}
\begin{proof}[Proof of Theorem \ref{teo2}] This now follows from the previous Lemma and the discussion of the discriminants that we have performed.
\end{proof}

\begin{proof}[Proof of Corollary  \ref{ficof}] 
We use the notations and results of Remark \ref{scala}. We divide $S^c$ in the connected components of $\Gama$ and apply the standard theory of quadratic Hamiltonians to each  geometric block  $A$:$$H_A:=\sum_{k\in A}\tilde\Ome_k |z'_k|^2+ \tilde{\mathcal Q}_{A}(w'), $$ where now all the $\tilde\Ome_k$ in the block are equal in the case of $A$ black while they are opposite in the case of a red edge so that  $ad( \sum_{k\in A}\tilde\Ome_k |z'_k|^2)$ acts as a scalar matrix on the variables $Z_A$.
\smallskip

  If $ \ii\, ad(\tilde{\mathcal Q}_{A}(w))$ is semi--simple with real eigenvalues, it is a standard fact that there exists a real linear symplectic change of variables  
 $\psi_A$ under which 
$$H_A \circ \psi_A= \sum_{k\in A}\bar\Omega_k |z_k|^2, $$  where $\pm\bar\Omega_k$ are the eigenvalues of $\ii ad(H_A)$.

In particular for all geometric blocks $A$ which do not contain a red edge, this property holds for all positive $\xi$ by using Lemma \ref{autoag}.    Formula \eqref{hfin} follows with $\bar{\mathcal Q}= \sum_{A \in {\rm red}}\tilde{\mathcal Q}_A$, here $A \in {\rm red}$ means that $A$ contains red edges, and we have seen that this is a finite set. We have proved ii).

The change of variables $\psi_A:= w_A\to \mathcal L_A(\xi) w_A$ where $w_A= (z_k',\bar z_k')_{k\in A}$ and $\mathcal L_A$ is a matrix which diagonalizes  $\tilde Q_A= \ii ad(\tilde{\mathcal Q}_A)$. Since there are only a finite number of distinct matrices $\tilde Q_A$, we only need a finite number of distinct $\psi_A$.

We have $\bar\Ome_k=\tilde \Ome_k +\lambda_k(\tilde Q_A)$, where $\lambda_k(\tilde Q_A)$ runs over the eigenvalues of $\tilde Q_A$, this proves i). 

 Each change of variables $\psi_A$ is locally analytic (real algebraic) in $\xi$ for those $\xi$ where the  eigenvalues which are not identically equal are distinct. This condition identifies an algebraic hypersurface (where the non--identically equal eigenvalues of a combinatorial block coincide) which we have to remove. 
 The algebraic  hypersurface that we remove is  a cone, so we can remove a conic neighborhood of this hypersurface arbitrarily determined by its intersection with the unit sphere.   Since the choice of the neighborhood on the unit sphere is arbitrary, we easily see that  in the domain $A_\ro$ this is equivalent to removing a tubular neighborhood of order $\ro$. The bounds  iii) follow by homogeneity of the functions.

To be more explicit, from Theorem \ref{teo1} we have decomposed our space as infinite 
 direct   orthogonal sum of  symplectic spaces, each      decomposed explicitly as direct sum of two   Lagrangian subspaces  in duality each stable under $ad(N)$.  
 
 It is a standard fact  that, if for a  given symplectic block the matrix is semisimple with real eigenvalues then on that block there is a symplectic change of variables which makes it diagonal. In fact if the matrix preserves the decomposition into two Lagrangian subspaces, as in our case, we can take the change of variables preserving the two subspaces. 
 
 Under the geometric constraint all blocks relative to only black edges give rise to symmetric matrices for a positive quadratic form which   thus are semisimple with real eigenvalues and can be diagonalize. 
 
 Consider next the cases   in which,  under the arithmetic constraint,  we have   remaining 
   $2\times 2$ blocks associated to red edges. For these  we can apply Theorem \ref{teo2}.  It remains to prove that the global symplectic transformation defines as direct sum of all the ones diagonalizing each block is indeed continuous and preserves the domain. This follows from the fact that, up to a scalar summand,  we have only finitely many types of blocks in $ad(\mathcal Q)$.

\end{proof}
\appendix
 \section{Marked graphs\label{ilG}}
 {\em This section is independent of the previous parts of the paper. It has the only purpose to establish the correct algebraic language.This is a  standard material in Group Theory.}

\subsection{The Cayley  graphs\label{Cg}}
Let $G$ be a group and $X=X^{-1}\subset G$ a subset.\subsubsection{Marked graphs}
\begin{definition}
An $X$--marked graph is an oriented graph $\GA$ such that each oriented edge is marked with an element $x\in X$.
$$\xymatrix{ &a\ar@{->}[r]^{x} &b&  &a\ar@{<-}[r]^{\ x^{-1}} &b&   }$$
We mark the same edge, with opposite orientation, with $x^{-1}$.

A morphism of marked graphs $j:\GA_1\to \GA_2$ is a map between the vertices, which preserves the oriented edges and their markings.

A morphism which is also injective is called an  {\em embedding}.

\end{definition}

Recall that \begin{definition}\label{ilpa}\begin{enumerate}
\item   A  {\em path} $p$ of length $f$, from a vertex $a$ to a vertex $b$  in a graph is a sequence of vertices
$p=\{a=a_0,a_1,\ldots,a_f=b\}$  such that $a_{i-1},a_i$ form an   edge for all $i=1,\ldots, f$.

The vertex $a$ is called the {\em source} and $b$ the {\em target} of the path.
\smallskip

\item    A {\em circuit}  is a  path from a vertex $a$ to itself. 
\smallskip

We always exclude the presence in a path of {\em trivial steps}  that is $a_{i-1}=a_{i+1}$.

\item   A    graph  without circuits is called a {\em tree}.

\item    If we have an oriented path $p:=\{a_0,a_1,\ldots,a_f\}$ marked  $a_{i-1}\stackrel{g_{i }}\rightarrow a_i,\ i=1,\ldots,f$ in an $X$--marked graph, then we set  $g(p):=g_fg_{f-1}\ldots g_1 $.

\item    If $g^2=1$  then an edge marked $g$ has both orientations so we consider it as {\em unoriented}.

\end{enumerate}

 \end{definition} \subsubsection{Cayley graphs}
 A typical way to construct an $X$--marked graph is the following. Consider an action $G\times A\to A$ of $G$ on a set $A$, we then define.
 \begin{definition}[Cayley graph] The graph $A_X$ has as  vertices   the elements of $A$ and, given $a,b\in A$ we join them by an oriented edge $a\stackrel{x}\rightarrow b$, marked $x$, if $b=xa,\ x\in X$.  
 \end{definition}

 If $G$ acts on two sets $A_1$ and $A_2$ and $\pi:A_1\to A_2$ is a map compatible with the $G$ action then $\pi$ is also a morphism of marked graphs.
 
 A special case is obtained when $G$ acts on itself by left (resp. right) multiplication and we have the Cayley graph $G_X^l$ (resp. $G_X^r$).   We  concentrate on $G_X^l$ which we just denote by $G_X$.
One then immediately sees that \begin{lemma}
\label{propp}  If $G$ acts on a set $A$ and $a\in A$ the {\em orbit map}  $g\mapsto ga$ is  compatible with the graph structure. 

The graph $G_X$ is  preserved by right multiplication by elements of $G$, that is if $a,b$ are joined by an edge marked $g$ then also $ ah, bh$ are so joined, for all $h\in G$.

The graphs $G_X^l,\ G_X^r$ are isomorphic  with opposite orientations under the map $g\mapsto g^{-1}$.

 The graph $G_X $ is connected if and only if $X$ generates $G$, otherwise its connected components are the right cosets in $G$ of the subgroup $H$ generated by $X$. 
\end{lemma}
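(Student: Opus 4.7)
The plan is to verify each of the four assertions by direct unwinding of definitions; no step requires more than group associativity together with the standing hypothesis $X = X^{-1}$. The only genuine care required is notational bookkeeping for left versus right multiplication, and for the orientation reversal in item (iii).

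For items (i) and (ii) my approach is straightforward. An edge $g \to xg$ of $G_X^l$ marked $x$ has image $(ga, xga)$ under the orbit map $g' \mapsto g' a$, and by the Cayley convention ($a \to b$ marked $x$ iff $b = xa$) this is precisely an edge of $A_X$ with the same marking, so the orbit map is a morphism of marked graphs. Specializing to the right action of $G$ on itself, the associativity $(xa)h = x(ah)$ shows that any edge $a \to b$ marked $x$ is carried by right multiplication by $h$ to the edge $ah \to bh$ again marked $x$, which is (ii).

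For the isomorphism in (iii) I would use the involution $\iota \colon g \mapsto g^{-1}$ on vertices. A direct computation shows that $b = xa$ is equivalent to $a^{-1} = b^{-1} x$, so the edge $a \to b$ of $G_X^l$ marked $x$ corresponds, via $\iota$, to the edge $b^{-1} \to a^{-1}$ of $G_X^r$ marked $x$ (recalling that edges in $G_X^r$ are $a \to b$ marked $x$ iff $b = ax$). Since $X = X^{-1}$, this establishes a marking-preserving bijection between the two edge sets; because the source of the original edge is sent to the target of the image, orientations are reversed, which is the content of (iii).

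For (iv), the plan is to translate paths into factorizations. A path $g = a_0, \dots, a_n = h$ in $G_X^l$ is equivalent to a factorization $h = x_n x_{n-1} \cdots x_1 g$ with each $x_i \in X$ (the hypothesis $X = X^{-1}$ ensures edges may be traversed in either direction), hence $h g^{-1} \in H := \langle X \rangle$, i.e.\ $h \in Hg$. Conversely every element of $H$ admits such a factorization and so determines a path. The connected components of $G_X^l$ are therefore precisely the right cosets $Hg$ of $H$, and $G_X^l$ is connected iff $H = G$, i.e.\ iff $X$ generates $G$. The only ``obstacle'' throughout is the purely notational point of tracking left versus right conventions consistently.
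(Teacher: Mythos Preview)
Your proof is correct. The paper itself provides no proof of this lemma (it is introduced with ``One then immediately sees that''), so your direct verification from the definitions is precisely the routine argument being taken for granted.
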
 

\medskip

  \begin{definition}\label{mare} Given an   $X$--marked graph $\GA$.
 We say that $\GA$ is {\em compatible} with $G_X$  if it can be embedded $j:\GA\to G_X$ in $G_X$.  
 \end{definition} Note: \  two embeddings of $\GA$ in $G_X$  differ by a right multiplication by an element of $G$.
  
 Let us understand the conditions under which a connected graph $\GA$ is compatible.  Take two vertices $h,k$  in   $\GA$  and join them by a path $p:=k=k_0,k_1,\ldots, k_t=h$. Assume that $k_{i-1},k_{i },\ i=1,\ldots, t$ is marked by the element $g_i\in X$. Then define $g(p):=g_t g_{t-1}\ldots g_1$.  We can fix an element $r\in \GA$ which we call {\em the root}  and lift it for instance to 1.  Given any other element $h\in \GA$ choose a path $p$ from $r$ to $h$ and set   $g_h :=g(p)$.  In order for this to be well defined we need  that if $h$ is joined by two distinct paths $p_1,p_2$ then $g(p_1)=g(p_2)$. In other words
 \begin{lemma}
\label{lacomp} $\GA$ is compatible if and only if  given any circuit $p$ from $r$ to $r$  we  have $g(p)=Id$.
\end{lemma}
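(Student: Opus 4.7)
The plan is to prove the equivalence by handling the two directions separately. The forward direction (compatibility implies trivial holonomy on circuits) is essentially a direct computation; the reverse direction (constructing the embedding from the circuit condition) is the substantial content and will occupy the bulk of the proof.

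For the forward direction, I will suppose an embedding $j:\GA\to G_X$ exists and set $r_0:=j(r)$. Since $j$ preserves marked edges and since an edge in $G_X$ marked $x$ connects any $a\in G$ to $xa$, each step $a\stackrel{x}{\to}b$ in $\GA$ forces $j(b)=x\cdot j(a)$. Iterating along a circuit $p=\{r=a_0,a_1,\dots,a_f=r\}$ with successive markings $g_1,\dots,g_f$ yields $j(a_f)=g_f g_{f-1}\cdots g_1\cdot j(a_0)=g(p)\cdot r_0$; since $a_f=a_0=r$ we obtain $g(p)=\mathrm{Id}$.

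For the reverse direction, fix the root $r$, set $j(r):=1\in G$, and for every vertex $h$ choose a path $p_h$ from $r$ to $h$ in $\GA$, defining $j(h):=g(p_h)$. The key verification is well-definedness: given two paths $p_1,p_2$ from $r$ to $h$, the concatenation $p_2^{-1}\cdot p_1$ (traverse $p_1$, then $p_2$ backwards) is a circuit at $r$. A reversed path has its markings inverted and the product order reversed, so $g(p_2^{-1})=g(p_2)^{-1}$; the hypothesis gives $\mathrm{Id}=g(p_2^{-1}\cdot p_1)=g(p_2)^{-1}g(p_1)$, hence $g(p_1)=g(p_2)$. The map $j$ is then a morphism of marked graphs: for any edge $h\stackrel{x}{\to}k$, appending this edge to $p_h$ produces a valid path to $k$ with holonomy $x\cdot g(p_h)$, so $j(k)=x\cdot j(h)$, matching the edge relation in $G_X$. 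This identifies $\GA$ with a subgraph of $G_X$ in the sense of Definition \ref{mare}.

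The main care points will be two minor combinatorial subtleties. First, when reversing $p_2$ and concatenating with $p_1$, one must check that the formal concatenation is legitimately a path in the sense of Definition \ref{ilpa} — in particular that potential trivial steps at the junction vertex $h$ are handled, which is a standard reduction (any trivial cancellation only shortens the circuit and does not affect the holonomy, since the canceled pair contributes $x\cdot x^{-1}=\mathrm{Id}$). Second, one should note that the construction depends on the choice of paths $p_h$, but the circuit condition makes $j$ independent of these choices, which also explains the remark that two embeddings differ by right multiplication (corresponding to the free choice $j(r)$). The injectivity inherent in calling $j$ an embedding is immediate from the graph-theoretic interpretation: the image of $\GA$ under $j$ is a subgraph of $G_X$ that carries exactly the markings of $\GA$, so one may identify $\GA$ with this image.
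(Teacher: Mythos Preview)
Your proof is correct and follows exactly the approach sketched in the paper's discussion preceding the lemma (the paper does not give a separate formal proof beyond that setup). One small remark: your last sentence on injectivity is not really an argument, and indeed for an arbitrary $X$-marked graph the circuit condition alone does not force $j$ to be injective (e.g.\ a tree with two distinct edges out of $r$ both marked $x$ to distinct targets); the paper tacitly works with subgraphs of Cayley graphs $A_X$, where from each vertex there is at most one outgoing edge per marking, and in that setting injectivity does follow from well-definedness of the lift.
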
  If this condition is fulfilled we have the special {\em lift }
  $j:a\mapsto g_a  $ under which $r\mapsto 1$.

In particular suppose that $G$ acts on a set $A$ and $\GA\subset A$ is a connected subgraph  of $A_X$ with $f$  vertices.   Then
\begin{corollary}\label{respro}
A sufficient condition for $\GA$ to be embedded in $G_X$ is that, for any   $a\in \GA$, if an element $g\in G$ is a product $g=x_1x_2\ldots x_d$ of $d\leq f$ elements we have that $ga=a$ implies $g=1$.
\end{corollary}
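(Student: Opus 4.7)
The plan is to deduce compatibility in the sense of Lemma \ref{lacomp} directly from the hypothesis, and then invoke that lemma to produce the embedding. Fix a root $r\in\GA$, to be lifted to the identity $1\in G$. By Lemma \ref{lacomp} it suffices to show that for every circuit $p$ based at $r$ one has $g(p)=1$; the lift $a\mapsto g_a$ is then well-defined, it preserves markings by construction (the last edge of a path to a neighbour $b=xa$ contributes the factor $x$), and it is injective because $g_a=g_b$ forces $a=g_a\,r=g_b\,r=b$ in the ambient action on $A$.

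The core claim I would prove is the stronger statement: \emph{for every vertex $a_0\in\GA$ and every circuit $p=a_0,a_1,\dots,a_t=a_0$ in $\GA$ one has $g(p)=1$}, by induction on the length $t$. For the base case $t\le f$ the hypothesis applies verbatim: $g(p)$ is a product of at most $f$ elements of $X$ and $g(p)\,a_0=a_0$, hence $g(p)=1$.

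For the inductive step $t>f$, the pigeonhole principle applied to the $t$ vertices $a_1,\dots,a_t=a_0$ (in a set of size $f$) produces indices $0\le i<j\le t$ with $a_i=a_j$ and $(i,j)\ne(0,t)$. Split $p$ into
\[
p_1=a_i,a_{i+1},\dots,a_j\quad\text{(a circuit at $a_i$, length $j-i$)},
\]
\[
p_2=a_0,a_1,\dots,a_i,a_{j+1},\dots,a_t\quad\text{(a circuit at $a_0$, length $t-(j-i)$)}.
\]
Writing $g(p)=g_t\cdots g_1$ and grouping blocks gives
\[
g(p)=(g_t\cdots g_{j+1})\,g(p_1)\,(g_i\cdots g_1),\qquad g(p_2)=(g_t\cdots g_{j+1})(g_i\cdots g_1),
\]
so that $g(p_1)=1$ and $g(p_2)=1$ together imply $g(p)=1$. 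Both sub-circuits have length strictly less than $t$, so the induction closes once we verify they are legitimate paths, i.e. contain no trivial step $a_{k-1}=a_{k+1}$.

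The only place a trivial step can appear is at the splice $a_i\to a_{j+1}$ inside $p_2$, namely when $a_{i-1}=a_{j+1}$ (or symmetrically $a_{i+1}=a_{j-1}$ inside $p_1$, which is impossible since $p_1$ is a sub-path of the original $p$ and hence already trivial-step-free; only the splice can create new cancellations). In that case I would cancel the offending pair, which shortens the path by $2$ and contributes a factor $xx^{-1}=1$ to the product, and iterate the reduction until a reduced path is obtained. The reduced versions of $p_1,p_2$ are still strictly shorter than $t$, so the inductive hypothesis applies and delivers $g(p_1)=g(p_2)=1$, hence $g(p)=1$.

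The main (minor) obstacle is precisely this bookkeeping of trivial steps after the splice; the pigeonhole/induction skeleton is otherwise entirely routine. Once the circuit claim is established, Lemma \ref{lacomp} provides the embedding $j:\GA\hookrightarrow G_X$, concluding the proof.
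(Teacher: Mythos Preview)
The paper gives no explicit proof; the corollary is meant to follow immediately from Lemma \ref{lacomp}, and your induction on circuit length is a correct and natural fleshing-out of that reduction. One small correction: your claim that $p_1=a_i,\dots,a_j$ is automatically trivial-step-free ``as a sub-path of $p$'' overlooks the wrap-around step $a_{j-1},\,a_j=a_i,\,a_{i+1}$, where $a_{j-1}=a_{i+1}$ is \emph{not} ruled out by $p$ being trivial-step-free (this triple is not consecutive in $p$). This is harmless---either apply to $p_1$ the same cancellation procedure you already describe for $p_2$, or, more cleanly, choose the pair $(i,j)$ with $j-i$ minimal, so that $p_1$ is a \emph{simple} cycle of length at most $f$ and the hypothesis disposes of it in one step without any induction.
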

\section{Proof of Lemma \ref{spegra}}\label{prspegra}
\begin{proof} Consider a graph with $r+1$  vertices and of rank $r\geq 2$. We distinguish the elements $a_i,\ i=1,\ldots, u$ corresponding to black vertices from the $b_j,\ j=1,\ldots, v$  of red vertices, we are assuming that both colors appear. We have $a_i(\underline 1)=0, b_j(\underline 1)=-2$. 
 
 We have the equations
 $$(x,\pi(a_i))=K(a_i),\quad |x|^2+(x,\pi(b_j))=K(b_j)$$
 If the  solution $x$ is polynomial in the $v_i$,   it is linear by a simple degree computation. Since it is also  equivariant under the orthogonal group, it follows that it has the form $x=\sum_sc_sv_s$ for some numbers $c_s$.   Let now $a=-\sum_sc_se_s$ so $x=-\pi(a)$. The fact that the given system of equations is satisfied for all $v_i$ (this since they are now polynomials)  is equivalent to the equations.
  \begin{equation}
-a a_i=C(a_i),\  a^2-ab_j=C(b_j).
\end{equation}By changing root if necessary we    may always assume that there are black vertices different from the root. For such a vertex $a_i\neq 0$ we have   an equation
   $ -2a a_i =a_i^2+a_i^{(2)}$  which implies that $a_i$ divides $a_i^{(2)}$.
   
   If  $a_i=\sum_jp_je_j$ we have that $a_i^{(2)}=\sum_jp_je_j^2$ is an irreducible polynomial unless  $a_i=p(e_h-e_k)$ (recall that $\sum_jp_j=0$).
    Then $-2a-a_i= (e_h+e_k)$ which implies $-2a=(1+p)e_h+(1-p)e_k $, namely $a=\alpha e_h+(-\alpha -1)e_k$, for some $\alpha$.
    
    Now we exploit the fact that $a$ satisfies also all the other equations. If it satisfies another black equation--say with a vertex $a_j$-- by linear independence of the vertices we must have $\alpha =0$ or $\alpha=-1$   and $a= - e_h$ for some $h$. Hence the only case to exclude is 1 black and one or more red equations. For a red equation we have:      $$2a^2-2ab_j= -b_j^2-b_j^{(2)}\iff a^2+(a-b_j)^2=-b_j^{(2)}.$$
By comparing the coefficients of the quadratic terms we see  that $b_j=\sum_l q_l e_l$  cannot have any positive coefficient $q_l$, since $\eta(b_j)=-2$.  Hence we must have $b_j=-e_h-e_k$ for the same $h,k$ appearing in $a$.  Now substitute in the equation
$$  (\alpha e_h+(-\alpha -1)e_k)^2+((\alpha +1)e_h-\alpha e_k)^2=e_h^2+e_k^2 $$  get $\alpha^2+(\alpha+1)^2=1$ with solutions $\alpha=0,-1$ hence $x=v_h,v_k$ as desired.  
\end{proof}
\section{Determinantal varieties}\label{detvar} In this section we think of a marking $\ell=\sum_{i=1}^ma_iv_i$  coming from the edges  (for $q=1$ we have $\pm v_i\pm v_j$)   as a {\em map} from $V^{\oplus m}$ to $V$.  Here $V$ is a vector space where the $v_i$ belong. Thus a list of $k$ markings is thought of as a map $\rho:  V^{\oplus m}=V\otimes\C^m\to V^{\oplus k}=V\otimes\C^k$. Such a map is given by a $k\times m$ matrix $A$ and $\rho= 1_V\otimes A$ so that $Im(\rho)=V\otimes Im(A),\ \ker(\rho)=V\otimes\ker(A)$.

When $\dim(V)=n$  we shall be interested in particular in $n$--tuples of markings. In this case we have
\begin{lemma}
An  $n$--tuples of markings $m_i:= \sum_j a_{ij}v_j$ is formally linearly independent -- that is the $n\times m$ matrix of the $a_{ij}$ has rank $n$-- if and only if the associated   map  $\rho :V^{\oplus m}\to V^{\oplus n}$ is surjective. 
\end{lemma}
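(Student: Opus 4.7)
The statement reduces to an elementary fact about tensored linear maps once one makes the identification $V^{\oplus k} = V \otimes \C^k$ cleanly. Under this identification, a marking $m_i = \sum_j a_{ij} v_j$, viewed as a component of the map $\rho$, corresponds exactly to $1_V \otimes A_i$, where $A_i$ is the $i$-th row of the matrix $A = (a_{ij})$. Assembling all $n$ rows, $\rho$ is precisely $1_V \otimes A$ with $A$ considered as a linear map $\C^m \to \C^n$.

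The plan is therefore to prove the general fact: for any finite-dimensional vector space $V$ and any linear map $A: \C^m \to \C^n$, the tensored map $1_V \otimes A : V \otimes \C^m \to V \otimes \C^n$ is surjective if and only if $A$ is. Since the target $\C^n$ is $n$-dimensional, surjectivity of $A$ is in turn equivalent to $\mathrm{rank}(A) = n$, which is the desired criterion.

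I would handle the two implications as follows. For the easy direction, if $\mathrm{rank}(A) = n$ then $A$ is surjective onto $\C^n$, so every elementary tensor $v \otimes c \in V \otimes \C^n$ is of the form $(1_V \otimes A)(v \otimes b)$ with $Ab = c$; since these elementary tensors span the target, $\rho$ is surjective. For the converse, fix a basis $\{\epsilon_1,\dots,\epsilon_n\}$ of $V$ and use it to decompose $V \otimes \C^k \cong \bigoplus_{a=1}^n \C^k$; in this decomposition $\rho$ is block–diagonal with $n$ identical blocks each equal to $A$. Hence $\mathrm{rank}(\rho) = n\,\mathrm{rank}(A)$, which equals $\dim V^{\oplus n} = n^2$ exactly when $\mathrm{rank}(A) = n$. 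Equivalently, and more invariantly, right–exactness of $V \otimes (-)$ gives $\mathrm{Im}(1_V \otimes A) = V \otimes \mathrm{Im}(A)$, so surjectivity of $\rho$ forces $\mathrm{Im}(A) = \C^n$.

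There is no real obstacle here: the only substantive step is recognizing that the map defined by the $n$-tuple of markings is literally $1_V \otimes A$, after which surjectivity of $\rho$ and full rank of $A$ are manifestly the same condition. The lemma is stated only to fix notation for the use made of it in Proposition \ref{codim}, where determinantal varieties of matrices $A$ of submaximal rank are identified with the loci where the corresponding tuples of markings fail to span.
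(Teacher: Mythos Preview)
Your proof is correct and matches the paper's approach: the paper states (just before the lemma) that $\rho = 1_V\otimes A$ with $\mathrm{Im}(\rho)=V\otimes\mathrm{Im}(A)$, and leaves the lemma itself unproved as an immediate consequence. You have simply supplied the details of that consequence, including the right-exactness observation $\mathrm{Im}(1_V\otimes A)=V\otimes\mathrm{Im}(A)$, which is exactly what the paper invokes.
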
 We may identify  $V^{\oplus n}$ with $n\times n$ matrices and  we have   the determinantal variety  $D_n$ of  $V^{\oplus n},$ defined by the vanishing of the determinant  $\det$ (an irreducible polynomial),  and formed by all the $n$--tuples of vectors $v_1,\ldots,v_n$ which are linearly dependent. The variety $D_n$ defines a similar irreducible determinantal variety $D_\rho:= \rho ^{-1}(D_n)$ in $V^{\oplus m}$ which   depends on  the map $\rho$. This is a proper hypersurface if and only if $\rho$ is surjective. We have already remarked that, in this case, $D_\rho$ is an irreducible hypersurface  with equation the irreducible polynomial $\det\circ\rho$. We need to see when different lists of markings give rise to different determinantal  varieties in $V^{\oplus m}$.  
\begin{lemma}\label{tras} Given a surjective map $\rho :V^{\oplus m}\to V^{\oplus n},$
a vector $a\in V^{\oplus m}$ is such that $a+b\in D_\rho,\ \forall b\in D_\rho$ if and only if $\rho(a)=0$.
\end{lemma}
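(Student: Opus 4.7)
My plan is to handle the two directions separately. The easy implication, $\rho(a)=0 \Rightarrow (a+D_\rho \subseteq D_\rho)$, is immediate: for $b\in D_\rho$ we have $\rho(a+b)=\rho(b)\in D_n$, so $a+b\in D_\rho$.

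For the converse I will exploit surjectivity of $\rho$ to remove $\rho$ from the picture. Set $c:=\rho(a)\in V^{\oplus n}$. Given any $d\in V^{\oplus n}$ with $\det(d)=0$, surjectivity furnishes some $b\in V^{\oplus m}$ with $\rho(b)=d$; by definition $b\in D_\rho$, the hypothesis then gives $a+b\in D_\rho$, and this reads $\det(c+d)=0$. Hence it suffices to prove the purely matrix-theoretic claim: if an $n\times n$ matrix $c$ satisfies $\det(c+d)=0$ for every singular $d$, then $c=0$.

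To prove this auxiliary claim I will view $f(d):=\det(c+d)$ as a polynomial in the entries of $d$ that vanishes on the irreducible hypersurface $\{\det=0\}\subset V^{\oplus n}$. By the Nullstellensatz together with irreducibility of $\det$, one gets $\det\mid f$; comparing degrees (both are of degree $n$ in the entries of $d$) and top-degree parts in $d$ (both are $\det(d)$) forces $f(d)=\det(d)$ as polynomials. Multilinearity of the determinant gives an expansion $\det(c+d)=\sum_{S\subseteq\{1,\dots,n\}}\det\bigl(c_{S},d_{S^c}\bigr)$ obtained by choosing, column by column, between a column of $c$ and the corresponding column of $d$. The piece with $|S|=1$ is $\sum_{i,j} c_{ij}\tilde{M}_{ij}(d)$, where $\tilde M_{ij}$ is the signed $(i,j)$-cofactor of the generic matrix $d$; since $f(d)-\det(d)\equiv 0$ this sum must vanish identically. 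The cofactors $\tilde{M}_{ij}$ are linearly independent polynomials in the $d_{kl}$ (each $\tilde{M}_{ij}$ is a polynomial of degree $n-1$ supported on variables $d_{kl}$ with $k\neq i$, $l\neq j$, and contains a monomial that appears in no other cofactor), so every $c_{ij}$ vanishes. Thus $c=\rho(a)=0$.

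There is essentially no serious obstacle here: the argument is a short application of the Nullstellensatz plus linear independence of cofactors. The only point to double-check is that surjectivity of $\rho$ onto $V^{\oplus n}$ is exactly what lets one translate the hypothesis ``$a+b\in D_\rho$ for all $b\in D_\rho$'' into the cleaner statement ``$\det(c+d)=0$ for all singular $d$''; without surjectivity the reduction would only give $\det(c+d)=0$ on $\rho(V^{\oplus m})\cap D_n$, which would not be enough to conclude.
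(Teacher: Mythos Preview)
Your proof is correct. Both directions are handled properly: the easy implication is immediate, and for the converse your reduction via surjectivity to the matrix statement ``$\det(c+d)=0$ for all singular $d$ implies $c=0$'' is valid. The Nullstellensatz argument is sound (irreducibility of $\det$ makes the ideal $(\det)$ radical), the degree comparison forcing $\det(c+d)=\det(d)$ identically is correct, and your linear-independence claim for the cofactors is right: each cofactor $\tilde M_{ij}$ is supported on monomials whose row-index set is $\{1,\dots,n\}\setminus\{i\}$ and column-index set is $\{1,\dots,n\}\setminus\{j\}$, so distinct $(i,j)$ give polynomials with disjoint monomial supports.

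However, your route is considerably heavier than the paper's. The paper argues the contrapositive in one line: if $A:=\rho(a)\neq 0$, one simply \emph{exhibits} a singular $B$ with $A+B$ invertible (and then any preimage $b$ of $B$ under $\rho$ is the desired witness). This is an elementary linear-algebra construction: after putting $A$ in the normal form $\begin{pmatrix} I_r & 0\\ 0 & 0\end{pmatrix}$ via invertible row and column operations, take $B=\begin{pmatrix} 0 & 0\\ 0 & I_{n-r}\end{pmatrix}$ when $r<n$ (singular, with $A+B=I_n$), and $B=0$ when $r=n$. Your approach, by contrast, invokes the Nullstellensatz, irreducibility of the determinant, and a multilinear cofactor analysis. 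What you gain is a stronger intermediate conclusion (the polynomial identity $\det(c+d)\equiv\det(d)$), but for the lemma as stated this is unnecessary; the paper's single-witness argument is shorter, more elementary, and entirely sufficient.
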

\begin{proof}
Clearly  if $\rho(a)=0$ then $a$ satisfies the condition. Conversely if $\rho(a)\neq 0$, we think of $\rho(a)$ as a non zero matrix  $A$ and it is easily seen that there is a matrix $B=\rho(b)\in D_n$  such that  $A+B=\rho(a+b)\notin D_n$.
\end{proof}
Let $\rho_1,\rho_2:V^{\oplus m}\to V^{\oplus n}$ be two surjective maps,  given by two $n\times m$ matrices $ A=(a_{i,j}),\ B=(b_{i,j});\ a_{i,j},b_{i,j}\in\mathbb C$ .
\begin{proposition}\label{kke}
$\rho_1^{-1}(D_n)=\rho_2^{-1}(D_n)$ if and only if  the two matrices $A,B$ have the same kernel.
\end{proposition}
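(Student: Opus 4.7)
The plan is to prove each implication by a short reduction: the ``only if'' direction is a direct application of Lemma \ref{tras} which recovers the kernel intrinsically from the determinantal variety, while the ``if'' direction is obtained by writing one map as an invertible change of coordinates composed with the other, and observing that $D_n$ is invariant under such changes.

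For the ``if'' direction, assume $\ker A = \ker B$. Since $A, B$ are both surjective $n\times m$ matrices with the same kernel, there exists an invertible $n\times n$ matrix $C$ with $B = CA$, and hence $\rho_2 = (1_V\otimes C)\circ \rho_1$. Under the identification of $V^{\oplus n}$ with the space of $n\times n$ matrices (columns being the components of the tuple), the operator $1_V\otimes C$ acts on matrices $M$ by $M\mapsto MC^t$, which is an automorphism of $V^{\oplus n}$ mapping $D_n$ onto itself (since $\det(MC^t)=\det(M)\det(C)$ and $\det(C)\neq 0$). Therefore $\rho_2^{-1}(D_n) = \rho_1^{-1}((1_V\otimes C)^{-1}(D_n)) = \rho_1^{-1}(D_n)$.

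For the ``only if'' direction, suppose $\rho_1^{-1}(D_n)=\rho_2^{-1}(D_n)$; call this common variety $D$. By Lemma \ref{tras}, for either $i=1,2$ we have the intrinsic characterization
\[
\ker(\rho_i) = \{\,a\in V^{\oplus m}\mid a+b\in D \text{ for every } b\in D\,\}.
\]
The right-hand side depends only on the variety $D$, not on the map $\rho_i$. Hence $\ker(\rho_1)=\ker(\rho_2)$. Since $\ker(\rho_i)=V\otimes \ker(A)$ (respectively $V\otimes\ker(B)$), this forces $\ker A = \ker B$, which completes the proof.

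No genuine obstacle arises: the ``if'' direction is a trivial change-of-basis argument, and the ``only if'' direction is essentially a corollary of the already-proven Lemma \ref{tras}, which does all the real work of showing that the kernel of $\rho$ is exactly the stabilizer of $D_\rho$ under translations.
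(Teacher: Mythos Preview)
Your proof is correct and follows essentially the same approach as the paper: both use Lemma \ref{tras} to recover $\ker(\rho_i)$ intrinsically from $\rho_i^{-1}(D_n)$ for the ``only if'' direction, and both write $B=CA$ with $C$ invertible and use the invariance of $D_n$ under the induced automorphism for the ``if'' direction. Your version is slightly more explicit about how $1_V\otimes C$ acts on matrices, but the argument is the same.
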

\begin{proof} The two matrices $A,B$ have the same kernel if and only if $\rho_1,\rho_2 $   have the same kernel.
By Lemma \ref{tras}, if  $\rho_1^{-1}(D_n)=\rho_2^{-1}(D_n)$ then  the two matrices $A,B$ have the same kernel. Conversely if  the two matrices $A,B$ have the same kernel we can write $B=CA$ with $C$ invertible.  Clearly $CD_n=D_n$ and the claim follows.
\end{proof}
We shall also need the following well known fact:
\begin{lemma}\label{zade}
Consider the determinantal variety $D,$ given by  $d(X)=0,$ of $n\times n$  complex  matrices of determinant zero. The real points of $D$ are Zariski dense in $D$.\footnote{this means that a polynomial vanishing on the real points of $D$ vanishes also on the complex points.}
\end{lemma}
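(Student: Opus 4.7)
\medskip

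\noindent\textbf{Proof plan.} The plan is to use a rational parametrization of the determinantal hypersurface $D$ that is defined over $\mathbb{Q}$, and then invoke the elementary fact that $\mathbb{R}^N$ is Zariski dense in $\mathbb{C}^N$ for any $N$ (a polynomial in $N$ variables vanishing on all of $\mathbb{R}^N$ is identically zero, hence vanishes on $\mathbb{C}^N$). Concretely, I will use the multiplication map
\[
\mu: M_{n,n-1}(\mathbb{C}) \times M_{n-1,n}(\mathbb{C}) \longrightarrow M_n(\mathbb{C}), \qquad (P,Q) \longmapsto PQ,
\]
whose entries are polynomials in the entries of $P$ and $Q$ with integer coefficients.

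The first step is to observe that the image of $\mu$ is exactly $D$. Indeed, for any $X \in M_n(\mathbb{C})$ with $\det X = 0$ the rank of $X$ is at most $n-1$, so the linear map $X: \mathbb{C}^n \to \mathbb{C}^n$ factors through $\mathbb{C}^{n-1}$, giving a factorization $X = PQ$; conversely any $PQ$ has rank at most $n-1$ and hence lies in $D$. The same statement holds over $\mathbb{R}$: every real singular matrix factors as $PQ$ with $P,Q$ real, because its row space has real dimension $\le n-1$.

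The second step is the density argument. Suppose $p \in \mathbb{C}[x_{ij}]$ vanishes on $D(\mathbb{R})$, and set $F(P,Q) := p(PQ)$, which is a polynomial in the $2n(n-1)$ entries of $P$ and $Q$. For any real $P,Q$ the product $PQ$ lies in $D(\mathbb{R})$, so $F$ vanishes on $M_{n,n-1}(\mathbb{R}) \times M_{n-1,n}(\mathbb{R}) = \mathbb{R}^{2n(n-1)}$. Since a polynomial vanishing on a real Euclidean space is identically zero, $F \equiv 0$ as a polynomial, and therefore $F$ vanishes on $M_{n,n-1}(\mathbb{C}) \times M_{n-1,n}(\mathbb{C})$. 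By the surjectivity statement of the first step, every $X \in D(\mathbb{C})$ is of the form $PQ$ with complex $P,Q$, so $p(X) = F(P,Q) = 0$, proving that $p$ vanishes on all of $D(\mathbb{C})$ as required.

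There is no real obstacle in this argument; the only point that requires a line of justification is the existence of a real factorization $X = PQ$ for a real singular $X$, which follows from choosing a basis of the real column space of $X$ (of dimension $\le n-1$) and expressing the columns of $X$ in it. One could equivalently phrase the proof via the smooth locus of $D$ and a real implicit-function argument, but the factorization route is entirely algebraic and avoids any analytic input.
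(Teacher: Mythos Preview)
Your proof is correct and follows the same overall strategy as the paper's: pull back a polynomial $p$ vanishing on $D(\mathbb{R})$ along a polynomial parametrization of $D$ defined over $\mathbb{R}$, and then use the elementary fact that a polynomial vanishing on all of $\mathbb{R}^N$ is identically zero. The only difference is the choice of parametrization. You use $(P,Q)\mapsto PQ$ with $P\in M_{n,n-1}$, $Q\in M_{n-1,n}$, which surjects onto all of $D$ over both $\mathbb{R}$ and $\mathbb{C}$; the paper instead uses $(A,B)\mapsto A I_{n-1} B$ with $A,B\in GL_n$, whose image is only the rank $n-1$ locus, and then invokes the (Zariski) density of that locus in $D$. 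Your choice is marginally cleaner in that the parameter space is an affine space rather than a Zariski open subset, so no auxiliary density step is needed; the paper's choice has the mild advantage of making the orbit structure under $GL_n\times GL_n$ explicit, but this is not used elsewhere.
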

\begin{proof}
Consider in    $D$   the set of matrices    of rank exactly $n-1$. This set is dense in $D$ and  obtained from a  fixed matrix (for instance the diagonal matrix $I_{n-1}$ with all 1 except one 0)  by multiplying $AI_{n-1}B$  with $A,B$ invertible matrices.  If a polynomial $f$  vanishes on the real points of  $D$ then $F(A,B):=f(AI_{n-1} B)$  vanishes  for all $A,B$  invertible matrices and real. This set is the set of  points in $\mathbb R^{2n^2}$  where a polynomial (the product of the two determinants) is non zero. But a polynomial which vanishes in all the  points of any space $\mathbb R^{s}$  where another  polynomial   is non zero is necessarily the zero polynomial. So $f$ vanishes also on complex points. This is the meaning of  Zariski dense.
\end{proof}\bibliographystyle{plain}

\bibliography{bibliografia}
\end{document}